\numberwithin{equation}{section} 
\theoremstyle{plain}
\newtheorem{theorem}{Theorem}[section]
\newtheorem{proposition}[theorem]{Proposition}
\newtheorem{lemma}[theorem]{Lemma}
\newtheorem{subprop}{Proposition}[subsection]
\theoremstyle{definition}
\newtheorem{definition}[theorem]{Definition}
\theoremstyle{remark}
\newtheorem{remark}[theorem]{Remark}
\newtheorem{subremark}[subprop]{Remark}
\renewcommand{\to}{\longrightarrow}
\newcommand{\HH}{\mathrm{H}}
\begin{document}

\title{Galois structure of the holomorphic differentials of curves }

\author[F. Bleher]{Frauke M. Bleher}
\address{F.B.: Department of Mathematics\\University of Iowa\\
14 MacLean Hall\\Iowa City, IA 52242-1419\\ U.S.A.}
\email{frauke-bleher@uiowa.edu}
\thanks{The first author was supported in part by NSF FRG Grant No.\ DMS-1360621 and by NSF Grant No.\ DMS-1801328.}

\author[T. Chinburg]{Ted Chinburg}
\address{T.C.: Department of Mathematics\\University of Pennsylvania\\Philadelphia, PA 19104-6395\\ U.S.A.}
\email{ted@math.upenn.edu}
\thanks{The second author was supported in part by NSF FRG Grant No.\ DMS-1360767,  NSF SaTC Grants No. CNS-1513671 and CNS-1701785, and Simons Foundation Grant No.\ 338379.}

\author[A. Kontogeorgis]{Aristides Kontogeorgis}
\address{A.K.:  Department of Mathematics\\ National and Kapodistrian University of Athens\\
Panepistimioupolis, 15784 Athens\\ Greece}
\email{kontogar@math.uoa.gr}

\date{May 24, 2020}

\subjclass[2010]{Primary 11G20; Secondary 14H05, 14G17, 20C20}

\begin{abstract}
Let $X$ be a smooth projective geometrically irreducible curve over a perfect field $k$ of positive characteristic $p$. 
Suppose $G$ is a finite group acting faithfully on $X$ such that $G$ has non-trivial cyclic Sylow $p$-subgroups. 
We show that the decomposition of the space of holomorphic differentials of $X$ into a direct sum of indecomposable $k[G]$-modules
is uniquely determined by the lower ramification groups and the fundamental characters of closed points of $X$ 
that are ramified in the cover $X\to X/G$. We apply our method to determine the $\mathrm{PSL}(2,\mathbb{F}_\ell)$-module 
structure of the space of holomorphic differentials of the reduction of the modular curve $\mathcal{X}(\ell)$ modulo $p$ when 
$p$ and $\ell$ are distinct odd primes and the action of $\mathrm{PSL}(2,\mathbb{F}_\ell)$ on this reduction is not tamely ramified. 
This provides some non-trivial congruences modulo appropriate maximal ideals containing $p$ between modular
forms arising from isotypic components with respect to the action of $\mathrm{PSL}(2,\mathbb{F}_\ell)$ 
on $\mathcal{X}(\ell)$.
\end{abstract}

\maketitle

\section{Introduction}
\label{s:intro}

Let $k$ be a perfect field, and let $X$ be a smooth projective geometrically irreducible curve over $k$. 
Denote the sheaf of relative differentials of $X$ over $k$ by
$\Omega_X$. The space of holomorphic differentials of $X$ is the space of global sections $\HH^0(X,\Omega_X)$.
Suppose $G$ is a finite group acting faithfully on the right on $X$ over $k$. Then $G$ acts on the left on $\Omega_X$ and on $\HH^0(X,\Omega_X)$. In particular,
$\HH^0(X,\Omega_X)$ is a left $k[G]$-module of $k$-dimension equal to the genus $g(X)$ of $X$.
It is a classical problem, which was first posed by Hecke  \cite{Hecke:1928}, to determine the $k[G]$-module structure of 
$\HH^0(X,\Omega_X)$. In other words, this amounts to determining the decomposition of $\HH^0(X,\Omega_X)$
into its indecomposable direct $k[G]$-module summands. 
In the case when $k$ is algebraically closed and its characteristic does not divide $\#G$, this problem was solved by Chevalley and Weil 
\cite{Chevalley1934-eb} using character theory (see also \cite{Hurwitz1892}).

For the remainder of the paper, we assume that the characteristic of $k$ is a prime $p$ that divides $\#G$. Two
main difficulties then arise. One is the appearance of wild ramification and the other is that one needs to use 
positive characteristic representation theory. In particular, there are indecomposable $k[G]$-modules that are not irreducible.

If $k$ is algebraically closed and the ramification of the Galois cover $X \rightarrow X/G$ is tame,  then
Nakajima \cite[Thm. 2]{Nakajima:1986} and, independently, Kani \cite[Thm. 3]{Kani:86}
determined the $k[G]$-module structure of $\HH^0(X,\Omega_X)$ for an arbitrary group $G$. In particular, Nakajima showed 
that if $\mathcal{E}$ is any locally free $G$-sheaf of finite rank  then there is an exact
sequence of $k[G]$-modules
\begin{equation}
\label{eq:Nakajimasequence}
0\to \HH^0(X,\mathcal{E}) \to L^0 \to L^1\to \HH^1(X,\mathcal{E}) \to 0
\end{equation}
where $L^0$ and $L^1$ are projective $k[G]$-modules.

The case when $G$ is a cyclic group and the ramification of $X\to X/G$ is arbitrary was initiated by 
Valentini and Madan \cite[Thm. 1]{vm} who considered cyclic $p$-groups (and
also revisited cyclic $p'$-groups \cite[Thm. 2]{vm}). The case of general cyclic 
$G$ was treated by Karanikolopoulos and the third author \cite[Thm. 7]{SK}. 
In these papers, formulas are given of the multiplicities of the indecomposable direct $k[G]$-module summands of $\HH^0(X,\Omega_X)$
in terms of invariants introduced by Boseck \cite{Boseck} when constructing bases
of holomorphic differentials.  These Boseck invariants have also been used by 
Rzedowski-Calder\'{o}n, Villa-Salvador and Madan \cite{csm} and 
Marques and Ward \cite{marquesward} for some other groups under additional hypotheses on the cover $X\to X/G$.
A different, general approach to determining the decomposition of coherent
cohomology groups into indecomposable direct summands was developed by Borne in \cite{Borne:06},
using the notion of rings with several objects.  Some formulas concerning the case of
cyclic groups and curves are given in \cite[\S 7.2]{Borne:06}.

The goal of this article is to determine the decomposition of  $\HH^0(X,\Omega_X)$ into a direct sum of
indecomposable $k[G]$-modules
for every group $G$ with non-trivial cyclic Sylow $p$-subgroups. Even though there are only
finitely many isomorphism classes of indecomposable $k[G]$-modules in this case,
$G$ can have quite a complicated structure. For example, every finite simple non-abelian group has a non-trivial cyclic Sylow subgroup for at least
one prime  (see, e.g., \cite[Prop. 3]{Hiss} for a proof). Our main objective is to 
prove that the $k[G]$-module structure of $\HH^0(X,\Omega_X)$ is uniquely determined by
the ramification data consisting of the lower ramification groups and the associated characters of closed points of $X$ that are
ramified over $X/G$.

More precisely,
for each closed point $x \in X$, let $\mathfrak{m}_{X,x}$ be the maximal ideal of the local ring $\mathcal{O}_{X,x}$
and let $k(x)$ be the residue field of $x$.
For $i \ge 0$, the $i^{\mathrm{th}}$ lower ramification subgroup $G_{x,i}$ of $G$ at $x$ is the subgroup of all elements $\sigma \in G$
that fix $x$ and that act trivially on $\mathcal{O}_{X,x}/\mathfrak{m}_{X,x}^{i+1}$.  The fundamental character of the
inertia group $G_{x,0}$  of $x$ is the character $\theta_x:G_{x,0} \to k(x)^* = \mathrm{Aut}(\mathfrak{m}_{X,x}/\mathfrak{m}_{X,x}^2)$ giving the
action of $G_{x,0}$ on the cotangent space of $x$.  Here $\theta_x$ factors through the maximal
$p'$-quotient $G_{x,0}/G_{x,1}$ of $G_{x,0}$.  Our main result is as follows.

\begin{theorem}
\label{thm:main}
Suppose $G$ has non-trivial cyclic Sylow $p$-subgroups. Then the $k[G]$-module structure of 
$\HH^0(X,\Omega_X)$ is uniquely determined by the lower ramification groups and the fundamental 
characters of closed points $x$ of $X$ that are ramified in the cover $X \to X/G$.
\end{theorem}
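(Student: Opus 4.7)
My plan rests on two consequences of the cyclic Sylow hypothesis. First, each block of $k[G]$ has cyclic defect group, so by Brauer--Dade theory there are only finitely many indecomposable $k[G]$-modules, arranged on a Brauer tree inside each block, and an indecomposable module in such a block is determined up to isomorphism by its Brauer character together with its class in the stable category $k[G]\text{-mod}$ modulo projectives. Second, every inertia subgroup $G_{x,0}$ also has cyclic Sylow $p$-subgroup, so local analysis at each ramified point falls under the cyclic theory of Valentini--Madan and Karanikolopoulos--Kontogeorgis. The plan is therefore to compute the Brauer character and the stable class of $\HH^0(X,\Omega_X)$ separately from the prescribed ramification data, and then reassemble.

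For the Brauer character, I would apply the Lefschetz trace formula to $\Omega_X$ and $\O_X$ (using Serre duality and $\HH^0(X,\O_X)\cong k$) to recover the character of $\HH^0(X,\Omega_X)$ on every $p$-regular element $\sigma \in G$ from the fixed-point contributions at the closed points $x$ with $\sigma\in G_{x,0}$. These contributions involve only the fundamental characters $\theta_x$, so the Brauer character of $\HH^0(X,\Omega_X)$ is determined by the ramification data listed in the theorem.

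For the stable class, I would apply Nakajima's sequence \eqref{eq:Nakajimasequence} with $\mathcal{E}=\Omega_X(D)$ for an effective $G$-invariant divisor $D$ supported on the branch locus, chosen so that $\HH^0(X,\mathcal{E})$ and $\HH^1(X,\mathcal{E})$ decompose transparently. Modulo projectives, the class $[\HH^0(X,\Omega_X)]$ is then computed from the skyscraper quotient $\Omega_X(D)/\Omega_X$, whose global sections carry a filtration whose subquotients are built from the $G_{x,0}$-action on $\mathfrak{m}_{X,x}^{j}/\mathfrak{m}_{X,x}^{j+1}$; this action is controlled by the lower ramification groups $G_{x,i}$ together with $\theta_x$. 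Green correspondence transports this local data to normalisers of the non-trivial cyclic $p$-subgroups of $G$, and, inside the Brauer trees, the resulting stable class singles out a unique multiset of non-projective indecomposable summands.

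Combining the two outputs, the Brauer character pins down the multiplicities of the projective indecomposable summands once the non-projective summands have been determined, yielding the isomorphism class of $\HH^0(X,\Omega_X)$ as a $k[G]$-module. The main obstacle is the stable-class step: one must verify that after discarding the projective error arising from Nakajima's sequence the residue depends only on the $G_{x,i}$ and $\theta_x$, and then check that Green correspondence to normalisers of $p$-subgroups is compatible with this description in terms of ramification data. The cyclic Sylow hypothesis is invoked repeatedly, both to use the Brauer tree classification globally and to apply the Valentini--Madan and Karanikolopoulos--Kontogeorgis theorems locally at each ramified point.
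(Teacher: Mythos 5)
Your architecture---compute the Brauer character of $\HH^0(X,\Omega_X)$ and its image in the stable module category separately, then reassemble via the cyclic-defect (Brauer tree) classification---is a genuinely different route from the paper's (which reduces to $p$-hypo-elementary subgroups by Conlon induction and then filters $\Omega_X$ along the wild inertia). Your first step is sound: the holomorphic Lefschetz formula for $\mathcal{O}_X$ plus Serre duality does express the Brauer character on $p$-regular elements through fixed-point sets and the fundamental characters, and these are read off from the data in the statement. It is also true that, for blocks with cyclic defect, the stable isomorphism class together with the Brauer character determines the module. The problem is that you have not actually produced the stable class.

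The gap is in the stable-class step. Nakajima's sequence (\ref{eq:Nakajimasequence}) is available only when $X\to X/G$ is \emph{tamely} ramified, and Theorem \ref{thm:main} is precisely about the wild case. There is no choice of effective $G$-invariant $D$ making $\HH^0(X,\Omega_X(D))$ ``decompose transparently'': when the cover is wild, $\pi_*\mathcal{O}_X$ is not locally free over $\mathcal{O}_{X/G}[G]$ at the wild branch points, and by Nakajima's characterization of tameness the global sections of $G$-line bundles of large degree are then not projective $k[G]$-modules. Concretely, if a sequence of the shape (\ref{eq:Nakajimasequence}) existed for $\mathcal{E}=\Omega_X$ itself, then $\HH^0(X,\Omega_X)$ would be, up to projectives, a single syzygy of the trivial module, hence would have non-projective summands only in the principal block---contradicting Theorem \ref{thm:modularresult}, where non-projective summands occur in many blocks. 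So the exact sequence you want does not exist on $X$, the ``projective error'' you propose to discard is not projective, and the unknown non-projective part of $\HH^0(X,\Omega_X(D))$ contaminates any computation of the stable class of $\HH^0(X,\Omega_X)$ from the skyscraper $\Omega_X(D)/\Omega_X$ alone. This is exactly the obstruction the paper's proof is built to circumvent: it filters $\Omega_X$ by the kernels of $(\tau-1)^j$ for the wild inertia $I$, identifies the graded pieces with twists $S_{\chi^{-j}}\otimes_k\Omega_Y(D_j)$ on $Y=X/I$ with $D_j$ computed from the lower ramification groups (Proposition \ref{prop:filter}), matches the filtration with global sections by a dimension count (Lemma \ref{lem:dimension}), and only then invokes (\ref{eq:Nakajimasequence})---legitimately, since $Y\to X/H$ is tame (Proposition \ref{prop:fundamental}). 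Without a substitute for this descent to a tame cover, your argument does not go through.
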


There are two main differences between Theorem \ref{thm:main} and previous 
literature on this subject. The first is that we do not require the group $G$ to
be solvable or any restrictions on the ramification of the $G$-cover, 
but we only require the Sylow $p$-subgroups of $G$ to be cyclic. 
The second difference is that we work mostly locally rather than globally and we
phrase our results only in terms of ramification groups and fundamental 
characters. In particular, our results do not involve invariants constructed from 
equations for successive Artin-Schreier extensions of function fields. 
In previous work, such equations were involved in defining the invariants necessary 
to calculate the Galois structure of the holomorphic differentials. Here we only use 
Artin-Schreier extensions in our proof, but the statement of Theorem \ref{thm:main} 
does not involve invariants associated to solutions of such equations.

Our work is relevant to the study of classical modular forms of weight two.
Suppose $N \ge 3$ is an integer prime to $p$, and let $\Gamma(N)$
be the principal congruence subgroup of $\mathrm{SL}(2,\mathbb{Z})$
of level $N$. Let $F$ be a number field that is unramified over $p$ and that 
contains a primitive $N^{\mathrm{th}}$ root of unity $\zeta_N$.  Suppose
$A$ is a Dedekind subring of $F$ that has fraction field $F$ and that contains 
$\mathbb{Z}[\frac{1}{N},\zeta_N]$.  
By \cite{Katz1973,KaMa} (see also \cite{Igusa59}), there is a proper smooth canonical model $\mathcal{X}(N)$ of the 
modular curve associated to $\Gamma(N)$ over $A$.  The global sections 
$\HH^0(\mathcal{X}(N),\Omega_{\mathcal{X}(N)})$
are naturally identified with the $A$-lattice $ \mathcal{S}(A)$ of holomorphic weight $2$ cusp forms for
$\Gamma(N)$ that have  $q$-expansion coefficients in $A$ at all the cusps, in the sense of 
\cite[\S1.6]{Katz1973}. See \S\ref{s:modular} for details. Note that in the classic references, such 
as \cite{ShimuraBook}, the action of elements of $\mathrm{SL}(2,\mathbb{Z})$ on $\mathcal{S}(A)$ is 
on the right. As usual, one can turn any right action of a group on a module into a left action by 
letting the left action of a group element equal the right action of its inverse.

Let $\mathcal{V}(F,p)$ be the set of places $v$ of $F$ over $p$, and let $\mathcal{O}_{F,v}$ be the ring of
integers of the completion $F_v$ of $F$ at $v$.  We now suppose $A$ is contained in 
$\mathcal{O}_{F,v}$ for all $v \in \mathcal{V}(F,p)$.  We further suppose that $N=\ell$ is an odd prime number, and
we let $G = \mathrm{PSL}(2,\mathbb{Z}/N)=\mathrm{PSL}(2,\mathbb{F}_\ell)$. By analyzing the
action of $G$ on the holomorphic differentials of the reduction 
of $\mathcal{X}(\ell)$ modulo $p$, we will show the following result on the
structure of the holomorphic differentials of $\mathcal{X}(\ell)$ as an $\mathcal{O}_{F,v}[G]$-module.

\begin{theorem} 
\label{thm:firstmodtheorem}
Suppose $A \subset \mathcal{O}_{F,v}$ for all $v \in \mathcal{V}(F,p)$, $N=\ell$ is an odd prime number with 
$\ell\neq p$ and $p \ge 3$.  
For all $v \in \mathcal{V}(F,p)$, the $\mathcal{O}_{F,v}[G]$-module
$$\mathcal{O}_{F,v} \otimes_A \HH^0(\mathcal{X}(\ell),\Omega_{\mathcal{X}(\ell)}) = 
\mathcal{O}_{F,v}\otimes_A \mathcal{S}(A)$$
is a direct sum over blocks $B$ of $\mathcal{O}_{F,v}[G]$ of modules of the form $P_B \oplus U_B$ 
in which $P_B$ is a projective $B$-module and $U_B$ is either the zero module or a single indecomposable 
non-projective $B$-module.  
One can determine $P_B$ and the reduction $\overline{U}_B$ of $U_B$ modulo the maximal ideal 
$\mathfrak{m}_{F,v}$ of $\mathcal{O}_{F,v}$ from the ramification data associated to the action of $G$ on 
$\mathcal{X}(\ell)$ modulo $p$.
\end{theorem}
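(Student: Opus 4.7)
The plan is to deduce Theorem~\ref{thm:firstmodtheorem} from Theorem~\ref{thm:main} together with the classical deformation theory of cyclic defect blocks. First, since $\ell \neq p$ and $N = \ell$ is invertible on $A$, the Katz--Mazur model $\mathcal{X}(\ell)$ is smooth and proper over $\mathcal{O}_{F,v}$, and cohomology-and-base-change yields that $\HH^0(\mathcal{X}(\ell),\Omega_{\mathcal{X}(\ell)})$ is a finite free $\mathcal{O}_{F,v}$-module whose reduction modulo $\mathfrak{m}_{F,v}$ is $\HH^0(\mathcal{X}(\ell)_k,\Omega_{\mathcal{X}(\ell)_k})$, where $k$ is the residue field at $v$. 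This reduces the problem to two subproblems: (i) computing the $k[G]$-module structure of the reduction, and (ii) lifting that decomposition to an $\mathcal{O}_{F,v}[G]$-module decomposition.

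For step (i), observe that for $p \geq 3$ and $p \neq \ell$ a Sylow $p$-subgroup of $G = \mathrm{PSL}(2,\mathbb{F}_\ell)$ is contained in either a split or non-split torus and hence is cyclic of order dividing $(\ell \pm 1)/2$. Theorem~\ref{thm:main} therefore applies, so the $k[G]$-structure of $\HH^0(\mathcal{X}(\ell)_k,\Omega_{\mathcal{X}(\ell)_k})$ is determined once one has enumerated the lower ramification groups and fundamental characters at the closed points of $\mathcal{X}(\ell)_k$ ramified over $\mathcal{X}(\ell)_k/G$. These are available from the classical theory of $\mathcal{X}(\ell)$: ramification at the cusps is tame of order $\ell$, ramification over $j=0$ and $j=1728$ is tame of orders $3$ and $2$ (requiring the minor adjustment when $p=3$), and wild ramification occurs exclusively at the supersingular points of $\mathcal{X}(\ell)_k$, whose stabilizers in $G$ contain the cyclic Sylow $p$-subgroup and whose fundamental characters are restrictions of a single character of a Sylow $p$-generator.

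For step (ii), the hypothesis on Sylow subgroups forces every block $B$ of $\mathcal{O}_{F,v}[G]$ to have cyclic defect group, so $B$ is a Brauer tree order in the sense of Green and Janusz. The theory of cyclic defect blocks provides a bijection between isomorphism classes of non-projective indecomposable $k[G]$-modules in the corresponding block of $k[G]$ and isomorphism classes of non-projective indecomposable $\mathcal{O}_{F,v}[G]$-lattices in $B$; projective indecomposables also lift uniquely. It therefore suffices to verify that when Theorem~\ref{thm:main} is applied to $\mathcal{X}(\ell)_k$, the resulting $k[G]$-module decomposition has, in each block of positive defect, at most one non-projective indecomposable summand $\overline{U_B}$. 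Granted this, each summand $\overline{P_B} \oplus \overline{U_B}$ lifts uniquely to a summand $P_B \oplus U_B$ over $\mathcal{O}_{F,v}$, yielding the block-by-block decomposition in the statement.

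The main obstacle is this last verification. It requires combining the explicit formulas produced by the proof of Theorem~\ref{thm:main} with an analysis of how the $G$-orbit of supersingular points, the inertia characters at those points, and the Brauer tree combinatorics of the non-principal blocks of $\mathrm{PSL}(2,\mathbb{F}_\ell)$ in characteristic $p$ interact. Because all wildly ramified points share a single cyclic Sylow $p$-subgroup up to conjugacy, with fundamental characters forming a coherent family of powers of a fixed generator's character, the contributions to any given block of positive defect should collapse to at most one non-projective indecomposable. The tame part contributes only to the projective summand $P_B$ via the Nakajima sequence \eqref{eq:Nakajimasequence}, so the decomposition takes the asserted form.
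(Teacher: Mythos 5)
Your overall architecture (reduce modulo $\mathfrak{m}_{F,v}$, determine the $k[G]$-structure of the special fibre, then lift the decomposition by idempotent lifting) matches the paper's strategy, and the lifting step is essentially what the paper does via the Theorem on Lifting Idempotents. However, there are two genuine problems in the middle of your argument.

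First, your description of the ramification is incorrect, and the error infects your heuristic for the key verification. For $p>3$ the cover $X_p(\ell)\to X_p(\ell)/G$ is branched at exactly three points with inertia groups of orders $2$, $3$ and $\ell$ (Remark \ref{rem:ramimodular}); it is \emph{tamely} ramified everywhere, and no point stabilizer contains a Sylow $p$-subgroup of $G$. There is no wild ramification "at the supersingular points": the ramification of the $G$-action comes from elliptic points and cusps, not from supersingularity. For $p=3$ the wild inertia is $\mathbb{Z}/3\le\Sigma_3$, which is in general a \emph{proper} subgroup of the cyclic Sylow $3$-subgroup of $G$ (of order $3^n$ with $3^n\,\|\,\ell-\epsilon$). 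So the premise of your final paragraph --- that the wildly ramified points share a single Sylow $p$-subgroup with a coherent family of fundamental characters, forcing the contributions to each block to collapse --- is not available.

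Second, the assertion that each block of positive defect receives at most one non-projective indecomposable summand is exactly the hard point, and you leave it as a plausibility argument. The paper's introduction explicitly calls this fact ``fortuitous,'' i.e.\ not a consequence of any general principle. For $p>3$ the paper obtains it cleanly from the Nakajima sequence (\ref{eq:Nakajimasequence}): since $\HH^1(X,\Omega_X)=k$, the module $\HH^0(X,\Omega_X)$ is projective plus the second syzygy $\Omega^2(k)$, a single indecomposable non-projective module in the principal block --- note this contradicts your claim that ``the tame part contributes only to the projective summand.'' For $p=3$ the claim requires the full explicit computation of Section \ref{s:modular3} (Green correspondence to $N_1$, Burkhardt's Brauer trees, and the case analysis of Propositions \ref{prop:fulldifferent1}--\ref{prop:fullequal2}); no shortcut through ``coherence of fundamental characters'' is known. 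A minor further point: there is no bijection between non-projective indecomposable $k[G]$-modules in a cyclic block and non-projective indecomposable $\mathcal{O}_{F,v}[G]$-lattices (there are in general more lattices); what one actually needs, and what the paper uses, is only that a direct sum decomposition of the reduction lifts to one of the lattice, and that the theorem asserts determination of $\overline{U_B}$ rather than of $U_B$ itself.
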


The fact that at most one non-projective indecomposable module $U_B$ is associated to each block $B$ is 
fortuitous.  When $p > 3$ we show how this follows from work of Nakajima \cite[Thm. 2]{Nakajima:1986},
and in particular from (\ref{eq:Nakajimasequence}).
When $p = 3$ the result is more difficult because the ramification of the action of $G$ on $\mathcal{X}(\ell)$
modulo $3$ is wild.  We determine the module structure of the holomorphic differentials 
of $\mathcal{X}(\ell)$ modulo $3$ in 
Theorem \ref{thm:modularresult} below, and this leads to Theorem \ref{thm:firstmodtheorem} in this case.
Note that the Sylow $2$-subgroups of $G$ are not cyclic, so the methods of this article are not sufficient to treat the 
case when $p=2$.

We now describe one approach to defining  congruences modulo $p$ between modular forms. This basically follows the approach in \cite{Ribet}. However, we consider weight $2$ cusp forms for the principal  congruence subgroup $\Gamma(N)$ (rather than for $\Gamma_0(N)$ or $\Gamma_1(N)$) and we allow more general rings $\mathbb{T}$ of Hecke operators to act (see below).
We then
show how Theorem \ref{thm:firstmodtheorem} enables us to characterize when such congruences can arise from the decomposition of $F\otimes_A\mathcal{S}(A)$ into $G$-isotypic pieces.  We refer to \cite[Chap. 3]{ShimuraBook} for a discussion of Hecke operators and their actions on modular forms.

Define $\mathcal{S}(F) = F \otimes_A \mathcal{S}(A)$ to be the space of weight two cusp forms 
that have $q$-expansion coefficients in $F$ at all cusps, in the sense of \cite[\S1.6]{Katz1973}.  
Let $\mathbb{T}$ be a ring of Hecke operators acting  on $\mathcal{S}(F)$. Suppose there is a decomposition
\begin{equation}
\label{eq:decomp}
\mathcal{S}(F) = E_1 \oplus E_2
\end{equation}
into a direct sum of $F$-subspaces that are stable under the action of $\mathbb{T}$.
Let $\mathfrak{a}$ be an ideal of $A$.  
Following \cite{Ribet}, a non-trivial congruence modulo $\mathfrak{a}$ linking $E_1$ and $E_2$ is defined to be a pair of forms 
$f \in \mathcal{S}(A) \cap E_1$ and $g\in \mathcal{S}(A) \cap E_2$ such that 
$$f \equiv g \mod \mathfrak{a} \cdot \mathcal{S}(A) \quad \mathrm{but} \quad f \not \in \mathfrak{a}\cdot \mathcal{S}(A).$$
Congruences of this kind have played an important role in the development of the theory of modular forms, Galois representations and arithmetic geometry.  For further discussion of them, see for example \cite{Diamond1997,Ribet2015}.

Our results are relevant to a method for producing congruences of the above kind.  Letting $N=\ell$ and
$G = \mathrm{PSL}(2,\mathbb{F}_\ell)$ as before, we can form a decomposition (\ref{eq:decomp}) in the following way.  Write $1$ in $F[G]$
as the sum $e_1 + e_2$ of two orthogonal central idempotents.  Define
\begin{equation}
\label{eq:centralidem}
E_1 = e_1  \mathcal{S}(F) \quad \mathrm{and}\quad 
E_2 = e_2 \mathcal{S}(F).
\end{equation} 
We will call a decomposition (\ref{eq:decomp}) of the form in (\ref{eq:centralidem}) a $G$-isotypic $\mathbb{T}$-stable decomposition of $\mathcal{S}(F)$.

In an appendix in \S \ref{s:Hecke} we show how to construct non-trivial $G$-isotypic $\mathbb{T}$-stable decompositions of $\mathcal{S}(F)$ when $\mathbb{T}$
is the ring of Hecke operators that have index prime to $\ell$ (see Proposition \ref{prop:Heckeresult}).  In this case, one can take $E_i = e_i \mathcal{S}(F)$ when 
$\{e_1,e_2\}$ is any pair of orthogonal central idempotents of $F[G]$ such that $1 = e_1 + e_2$ and each $e_i$
is fixed by the conjugation action of $\mathrm{PGL}(2,\mathbb{F}_\ell)$ on $G$.  

We will show the following theorem regarding  non-trivial congruences arising from 
$G$-isotypic $\mathbb{T}$-stable decompositions of $\mathcal{S}(F)$.

\begin{theorem}
\label{thm:secondmodtheorem}
With the assumptions of Theorem $\ref{thm:firstmodtheorem}$, suppose further that $F$ contains a root of unity 
of order equal to the prime to $p$ part of the order of $G$.  Let $\mathfrak{a}$ be the maximal ideal over $p$ in 
$A$ associated to $v \in \mathcal{V}(F,p)$.  A $\mathbb{T}$-stable decomposition $(\ref{eq:decomp})$ that is $G$-isotypic, 
in the sense that it arises from idempotents as in $(\ref{eq:centralidem})$, results in non-trivial congruences 
modulo $\mathfrak{a}$ between modular forms if and only if the following is true. 
There is a block $B$ of $\mathcal{O}_{F,v}[G]$
such that when $P_B$ and $U_B$ are as in Theorem $\ref{thm:firstmodtheorem}$, $M_B = P_B\oplus U_B$ 
is not equal to the direct sum $(M_B \cap e_1 M_B) \oplus (M_B \cap e_2 M_B)$.  
For a given $B$, there will be orthogonal idempotents $e_1$ and $e_2$ for which this is true if and only if $B$ has non-trivial defect groups,
and either $P_B \ne \{0\}$ or $F_v \otimes_{\mathcal{O}_{F,v}} U_B$ has two non-isomorphic 
irreducible constituents.
\end{theorem}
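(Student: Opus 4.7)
The plan is to convert the existence of a non-trivial congruence into a statement about $\mathcal{O}_{F,v}$-lattices in $M := \mathcal{O}_{F,v}\otimes_A\mathcal{S}(A)$, then decompose block-by-block using Theorem \ref{thm:firstmodtheorem}, and finally argue using the Brauer-tree description of blocks of cyclic defect, which applies here because $G$ has cyclic Sylow $p$-subgroups for every odd prime $p\neq\ell$.

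First, writing $M_i := M\cap e_i(F_v\otimes_{\mathcal{O}_{F,v}}M)$, I would show that a non-trivial congruence modulo $\mathfrak{a}$ exists if and only if $M_1\oplus M_2\subsetneq M$. This is essentially a flatness and localization computation: the quotient $\mathcal{S}(A)/\bigl((\mathcal{S}(A)\cap E_1)\oplus(\mathcal{S}(A)\cap E_2)\bigr)$ has $\mathfrak{a}$-torsion if and only if its extension of scalars $M/(M_1\oplus M_2)$ has $\mathfrak{m}_{F,v}$-torsion, and the latter holds precisely when $M_1\oplus M_2$ is a proper sublattice of $M$. Since $e_1,e_2$ are central they preserve each block summand $M_B$ from Theorem \ref{thm:firstmodtheorem}, so $M_1\oplus M_2\subsetneq M$ if and only if $(M_B\cap e_1M_B)\oplus(M_B\cap e_2M_B)\subsetneq M_B$ for at least one block $B$, which is the first equivalence claimed in the theorem.

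For the per-block characterization, write $\tilde e_i := e_ie_B\in F\otimes_{\mathcal{O}_{F,v}}B$. Since $B$ is indecomposable as an $\mathcal{O}_{F,v}$-order its only central idempotents are $0$ and $e_B$, so a non-trivial splitting requires $0\neq\tilde e_1,\tilde e_2\neq e_B$; such $\tilde e_i$ exist in $F\otimes B$ exactly when $B$ contains at least two ordinary irreducible characters, which by a theorem of Brauer is equivalent to $B$ having non-trivial defect. This establishes the necessity of the defect condition.

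Assume now that $B$ has non-trivial, hence cyclic, defect and write $M_B=P_B\oplus U_B$. Using the Brauer-tree structure, available since $F$ is a splitting field by the root-of-unity hypothesis, each indecomposable projective summand $\hat P$ of $P_B$ has an $F$-character equal to the sum of the two ordinary irreducibles attached to the endpoints of its Brauer-tree edge, and in particular has at least two non-isomorphic irreducible constituents. If $P_B\neq 0$ I would choose $\tilde e_1$ to be the central idempotent in $F\otimes B$ that cuts out one of these constituents; indecomposability of $\hat P$ over $\mathcal{O}_{F,v}[G]$ then forces the proper inclusion $(\hat P\cap\tilde e_1\hat P)\oplus(\hat P\cap\tilde e_2\hat P)\subsetneq\hat P$, hence the same for $M_B$. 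An analogous argument with $U_B$ in place of $\hat P$ covers the case when $F_v\otimes U_B$ has two non-isomorphic irreducible constituents. Conversely, if $P_B=0$ and $F_v\otimes U_B$ is isotypic, then every central idempotent of $F\otimes B$ acts on $F_v\otimes U_B$ as either the identity or zero, so $M_B\cap\tilde e_iM_B$ equals $M_B$ or $0$ and no failure of splitting can occur. The main technical difficulty I anticipate is making the step ``$\hat P$ (or $U_B$) indecomposable over $\mathcal{O}_{F,v}[G]$ $\Rightarrow$ the integral splitting fails'' robust enough to handle uniformly the various possible character decompositions, including the subtlety of an exceptional vertex of the Brauer tree.
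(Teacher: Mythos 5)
Your proposal is correct and follows essentially the same route as the paper: reduce the existence of a non-trivial congruence modulo $\mathfrak{a}$ to the failure of the integral splitting $M_B=(M_B\cap e_1M_B)\oplus(M_B\cap e_2M_B)$, decompose block by block using Theorem \ref{thm:firstmodtheorem}, rule out blocks of defect zero because they carry a single ordinary irreducible, and for positive defect use indecomposability of the projective summands (resp. of $U_B$) together with the fact that each then has at least two non-isomorphic irreducible constituents over $F_v$. The only differences are presentational: you make explicit the congruence-module reduction and the Brauer-tree justification of the two-constituent fact, both of which the paper treats as immediate.
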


To describe the module structure of the holomorphic differentials of $\mathcal{X}(\ell)$
modulo $3$, let $\ell\ne 3$ be an odd prime number. 
Let $\mathcal{P}_3$ be a maximal ideal of $A$ containing 3, define $k(\mathcal{P}_3)=A/\mathcal{P}_3$ 
to be the corresponding residue field, and let $k$ be an algebraically closed field containing $k(\mathcal{P}_3)$.
Define the reduction of $\mathcal{X}(\ell)$ modulo 3 over $k$ to be
$$X_3(\ell)=k\otimes_{k(\mathcal{P}_3)}\left(k(\mathcal{P}_3)\otimes_A \mathcal{X}(\ell)\right).$$
If  $\ell=5$ then $X_3(\ell)$ has genus 0. For $\ell\ge 7$,
we obtain Theorem \ref{thm:modularresult} below; for more detailed versions of part (i) of this theorem, see Propositions \ref{prop:fulldifferent1} - \ref{prop:fullequal2}. For a discussion of uniserial modules over Artin algebras, see, e.g., \cite[\S IV.2]{ARS}.

\begin{theorem}
\label{thm:modularresult}
Let $\ell\ge 7$ be a prime number, and define $G=\mathrm{PSL}(2,\mathbb{F}_\ell)$. Let $\mathcal{P}_3$, $k(\mathcal{P}_3)$ and $k$ be as above, and 
define $X=X_3(\ell)$ to be the reduction of $\mathcal{X}(\ell)$ modulo $3$ over $k$. 

\begin{itemize}
\item[(i)] Let $\epsilon=\pm 1$ be such that
$\ell\equiv \epsilon \mod 3$. Write $\ell-\epsilon = 2\cdot 3^n\cdot m$ where $3$ does not divide $m$, 
and let $\delta_{n,1}$ be the Kronecker delta.
If $T$ is a simple $k[G]$-module, then $U_{T,b}^{(G)}$ denotes a uniserial $k[G]$-module of length $b$ whose socle is isomorphic to $T$.
There exists a projective $k[G]$-module $Q_\ell$ such that the following is true:
\begin{itemize}
\item[(1)] Suppose $\ell\equiv 1\mod 4$ and $\ell\equiv -1 \mod 3$. For $0\le t\le (m-1)/2$, let $\widetilde{T}_t$ be representatives of simple $k[G]$-modules of
$k$-dimension $\ell-1$ such that $\widetilde{T}_0$ belongs to the principal block of $k[G]$. 
As a $k[G]$-module,
$$\HH^0(X,\Omega_X)\cong Q_\ell\oplus (1-\delta_{n,1})\,U_{\widetilde{T}_0,(3^{n-1}-1)/2}^{(G)} \oplus \bigoplus_{t=1}^{(m-1)/2}U_{\widetilde{T}_t,3^{n-1}}^{(G)}.$$
\item[(2)] Suppose $\ell\equiv -1\mod 4$ and $\ell\equiv 1 \mod 3$. Let $T_1$ be a simple $k[G]$-module of $k$-dimension $\ell$. For $1\le t\le (m-1)/2$, 
let $\widetilde{T}_t$ be representatives of simple $k[G]$-modules of $k$-dimension $\ell+1$. 
As a $k[G]$-module,
$$\HH^0(X,\Omega_X)\cong Q_\ell\oplus (1-\delta_{n,1})\, U_{T_1,2\cdot 3^{n-1}+1}^{(G)} \oplus \bigoplus_{t=1}^{(m-1)/2}U_{\widetilde{T}_t,2\cdot 3^{n-1}}^{(G)}.$$
\item[(3)] Suppose $\ell\equiv 1\mod 4$ and $\ell\equiv 1 \mod 3$. Let $T_{1,1}$ be a simple $k[G]$-module of $k$-dimension $\ell$. For $1\le t\le (m/2-1)$, 
let $\widetilde{T}_t$ be representatives of simple $k[G]$-modules of $k$-dimension $\ell+1$. There exists a simple $k[G]$-module $T_{0,1}$ of $k$-dimension $(\ell+1)/2$ such that,
as a $k[G]$-module,
$$\HH^0(X,\Omega_X)\cong Q_\ell\oplus (1-\delta_{n,1})\, U_{T_{1,1},2\cdot 3^{n-1}+1}^{(G)} \oplus U_{T_{0,1},2\cdot 3^{n-1}}^{(G)} \oplus
 \bigoplus_{t=1}^{m/2-1}U_{\widetilde{T}_t,2\cdot 3^{n-1}}^{(G)}.$$
\item[(4)] Suppose $\ell\equiv -1\mod 4$ and $\ell\equiv -1 \mod 3$. For $0\le t\le (m/2-1)$, let $\widetilde{T}_t$ be representatives of simple $k[G]$-modules of
$k$-dimension $\ell-1$ such that $\widetilde{T}_0$ belongs to the principal block of $k[G]$. There exists a simple $k[G]$-module $T_{0,1}$ of $k$-dimension $(\ell-1)/2$ such that,
as a $k[G]$-module,
$$\HH^0(X,\Omega_X)\cong Q_\ell\oplus (1-\delta_{n,1})\,
U_{\widetilde{T}_0,(3^{n-1}-1)/2}^{(G)} \oplus U_{T_{0,1},3^{n-1}}^{(G)} \oplus
 \bigoplus_{t=1}^{m/2-1}U_{\widetilde{T}_t, 3^{n-1}}^{(G)}.$$
\end{itemize}
The multiplicities of the projective indecomposable $k[G]$-modules in $Q_\ell$ are known explicitly. 
The isomorphism classes of the uniserial $k[G]$-modules occurring in parts $(1)$ through $(4)$ are uniquely 
determined by their socles and their composition series lengths.
In parts $(3)$ and $(4)$, there are two conjugacy classes of subgroups of $G$, represented by $H_1$ and $H_2$,
that are isomorphic to the symmetric group $\Sigma_3$
such that the conjugates of $H_1$ $($resp. $H_2$$)$ occur $($resp. do not occur$)$ as inertia groups of closed points of $X$.
This characterizes the simple $k[G]$-module $T_{0,1}$ in parts $(3)$ and $(4)$ as follows.
The restriction of $T_{0,1}$ to $H_1$ $($resp. $H_2$$)$ is a direct sum of 
a projective module and
a non-projective indecomposable module whose socle is the trivial simple module 
$($resp. the simple module corresponding to the sign character$)$.

\item[(ii)] Let $k_1$ be a perfect field containing $k(\mathcal{P}_3)$ and let $k$ be an algebraic closure of $k_1$.
Define $X_1=k_1\otimes_{k(\mathcal{P}_3)}\left(k(\mathcal{P}_3)\otimes_A \mathcal{X}(\ell)\right)$. Then
$$k\otimes_{k_1} \HH^0(X_1,\Omega_{X_1})\cong 
\HH^0(X,\Omega_X)$$
as $k[G]$-modules, 
and the decomposition of $\HH^0(X_1,\Omega_{X_1})$ into indecomposable
$k_1[G]$-modules is uniquely determined by the decomposition of
$\HH^0(X,\Omega_X)$ into indecomposable $k[G]$-modules.
Moreover, the $k_1[G]$-module 
$\HH^0(X_1,\Omega_{X_1})$ is a direct sum over blocks $B_1$ of 
$k_1[G]$ of modules of the form $P_{B_1} \oplus U_{B_1}$ 
in which $P_{B_1}$ is a projective $B_1$-module and $U_{B_1}$ is either the zero module or a single indecomposable 
non-projective $B_1$-module.  
Moreover, one can determine $P_{B_1}$ and $U_{B_1}$
from the ramification data associated to the cover $X\to X/G$.
\end{itemize}
\end{theorem}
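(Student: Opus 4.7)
The plan is to deduce Theorem~\ref{thm:modularresult} from Theorem~\ref{thm:main} combined with the block theory of $k[G]$ in characteristic $3$ and an explicit analysis of the ramification of $\mathcal{X}(\ell)\to \mathcal{X}(\ell)/G$ modulo $3$. By Theorem~\ref{thm:main}, the $k[G]$-module structure of $\HH^0(X,\Omega_X)$ is determined by the lower ramification groups and fundamental characters of the ramified closed points, so the work is to assemble this ramification data and translate it into an explicit decomposition.

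First I would identify the ramification of $X=X_3(\ell)\to X/G$. The $G$-action on $\mathcal{X}(\ell)$ is ramified only above the cusps and the elliptic points, and after reduction modulo $3$ the only wild ramification occurs at the closed points whose inertia has order divisible by $3$, namely the elliptic points of order $3$ (noting that $1728\equiv 0\pmod 3$, so the usual elliptic loci merge). Since the Sylow $3$-subgroup of $G$ is cyclic of order $3^n$, every wild inertia subgroup is cyclic, and I would record its order, the fundamental character coming from the $G$-action on the local coordinate at each elliptic point, and the higher ramification filtration, which for a cyclic order-$3$ wild quotient in residue characteristic $3$ has a single non-trivial break computable from the local Artin--Schreier description.

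The calculation itself combines two ingredients. First, Nakajima's exact sequence~(\ref{eq:Nakajimasequence}) applied to $\Omega_X$ lets me extract the projective summand $Q_\ell$ and reduce the problem, within each block $B$ of $k[G]$, to computing the class of $\HH^0(X,\Omega_X)\cdot e_B$ modulo projective summands; the explicit multiplicities in $Q_\ell$ come from the Nakajima--Kani formula for the tame part of the cover. Second, the blocks of $G=\mathrm{PSL}(2,\mathbb{F}_\ell)$ in characteristic $3$ are well understood: the maximal-defect blocks have cyclic defect group of order $3^n$ with explicitly known Brauer trees, and any non-projective indecomposable module in such a block is determined by its socle together with its composition length. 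The four case split (1)--(4) matches the pairs $(\ell\bmod 4,\ell\bmod 3)$, which govern the conjugacy classes of semisimple elements of $G$ and hence the shape of the Brauer trees and the dimensions of the simple modules.

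The main obstacle is matching the local wild ramification data at the elliptic points with specific Brauer tree modules, concretely locating the socle on the Brauer tree and pinning down the composition length. I would argue locally: decompose the contribution to $\HH^0(X,\Omega_X)$ at each wildly ramified point as a $k[G_{x,0}]$-module using the Valentini--Madan cyclic formula~\cite{vm} (together with the refinement in~\cite{SK} when a higher $3$-part is present in the decomposition group), then lift to $k[G]$ via induction and the Green correspondence and match against the Brauer tree of $B$. In cases~(3) and~(4) the simple module $T_{0,1}$ is characterised among the simple modules in its block by its restriction to one of the two $G$-conjugacy classes of subgroups isomorphic to $\Sigma_3$; the claim is that $H_1$ is the class that meets a wild inertia group, and I would verify the asserted restriction behaviour by a Mackey-type calculation. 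Finally, for part~(ii) I would combine the uniqueness statement in part~(i) with the Krull--Schmidt theorem over $k_1[G]$: the indecomposable summands of $\HH^0(X_1,\Omega_{X_1})$ are forced to be $\mathrm{Gal}(k/k_1)$-invariants of those of $\HH^0(X,\Omega_X)$, the block decomposition of $k[G]$ is already defined over $k_1$, and the pieces $P_{B_1}$ and $U_{B_1}$ are recovered accordingly from part~(i).
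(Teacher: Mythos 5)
Your high-level strategy (ramification data plus Brauer trees plus Green correspondence) points in the right direction, but two of your concrete steps would fail. First, you propose to apply Nakajima's sequence~(\ref{eq:Nakajimasequence}) to $\Omega_X$ for the full cover $X\to X/G$ in order to extract $Q_\ell$ and reduce to one non-projective class per block. That sequence is only available for \emph{tamely} ramified covers, and for $p=3$ the cover $X\to X/G$ is wild (the inertia groups $\Sigma_3$ have order divisible by $3$); this is exactly why the paper flags $p=3$ as the hard case, and the fact that only one non-projective indecomposable occurs per block is a conclusion of the analysis, not something you may assume at the outset. Second, your plan to ``decompose the contribution to $\HH^0(X,\Omega_X)$ at each wildly ramified point as a $k[G_{x,0}]$-module using the Valentini--Madan formula'' is not a well-defined operation --- there is no local direct summand of $\HH^0(X,\Omega_X)$ attached to a single closed point, and $G_{x,0}\cong\Sigma_3$ is not cyclic, so \cite{vm} does not apply to it. Inducing such putative local pieces up to $G$ would not reproduce the global module.

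What the paper actually does, and what your proposal is missing, is the identification of the correct intermediate group for the Green correspondence: $N_1=N_G(P_1)$, the (dihedral, of order $\ell-\epsilon$) normalizer of the order-$3$ subgroup. The restriction of $\HH^0(X,\Omega_X)$ to $N_1$ is computed by restricting further to the $3$-hypo-elementary subgroups $V$ (cyclic of order $(\ell-\epsilon)/2$) and the dihedral groups $\Delta_i$ of order $2\cdot 3^n$ and running the filtration argument of Propositions~\ref{prop:filter}--\ref{prop:fundamental} (i.e.\ Remark~\ref{rem:algorithm}) on each; the non-projective summands are then transported to $k[G]$ along the stable equivalence with $k[N_1]$ and located on the Brauer trees of \cite{Burkhardt}. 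Separately, the multiplicities in $Q_\ell$ require the full Brauer character of $\HH^0(X,\Omega_X)$, whose values at elements of order $\ell$ involve Gauss sums and the class number $h_{\mathbb{Q}(\sqrt{-\ell})}$ when $\ell\equiv -1\bmod 4$; your ``Nakajima--Kani formula for the tame part'' does not supply these. Your sketch of part (ii) via Galois descent and Krull--Schmidt is essentially the paper's argument, but it depends on part (i) being established first.
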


The main ingredients in the proof of Theorem \ref{thm:modularresult} are Theorem \ref{thm:main}
together with a description of the blocks of $k[G]$ and their Brauer trees in \cite{Burkhardt}.

We now describe the main ideas of the proof of Theorem \ref{thm:main}.

We first use the Conlon induction theorem \cite[Thm. (80.51)]{CRII} 
to reduce the problem of determining the $k[G]$-module structure of $\HH^0(X,\Omega_X)$ to
the problem of determining the $k[H]$-module structure of restrictions of $\HH^0(X,\Omega_X)$ to 
the so-called $p$-hypo-elementary subgroups $H$ of $G$. 
These $p$-hypo-elementary subgroups are semi-direct products of the form $H=P\rtimes C$, where $P$ is a
normal cyclic $p$-subgroup of $H$ and $C$ is a cyclic $p'$-group.

We then prove Theorem \ref{thm:main} in the case when $G=H$ is $p$-hypo-elementary.
The proof in this case is constructive and can be used as an algorithm to
determine the decomposition of $\HH^0(X,\Omega_X)$ into a direct sum of indecomposable $k[H]$-modules,
see Remark \ref{rem:algorithm}. 
More precisely, let $H=P\rtimes C$ be a $p$-hypo-elementary group as above, and
let $\chi:C\to \mathbb{F}_p^*$ be the character determining the action of $C$ on $P$.
Let $I\le P$ be the (cyclic, characteristic) subgroup of
$P$ generated by all inertia groups of the cover $X\to X/P$, say $I=\langle \tau\rangle$. 
If $M$ is a $k[I]$-module or a sheaf of $k[I]$-modules on a scheme, we use the notation $M^{(j)}$, 
for $0\le j \le \#I-1$, to denote the kernel of the action of $(\tau-1)^j$ on $M$. 
Let $\pi: X \to X/I$ be the quotient morphism. For ease of notation, we
write $\Omega_X^{(j)}$ instead of $(\pi_*\Omega_X)^{(j)}$.
We prove that the quotient sheaves $\Omega_X^{(j+1)}/\Omega_X^{(j)}$ 
are  line bundles for $\mathcal{O}_{X/I}$ isomorphic to $\chi^{-j}\otimes_k\Omega_{X/I}(D_j)$ for effective divisors $D_j$
on $X/I$ which may be explicitly determined by
the lower ramification groups of the cover $X\to X/I$.  Using a dimension count, we show that there is
an isomorphism 
\begin{equation}
\label{eq:isom}
\HH^0(X,\Omega_X)^{(j+1)}/\HH^0(X,\Omega_X)^{(j)}\cong \HH^0(X,\Omega_X^{(j+1)}/\Omega_X^{(j)})
\end{equation}
of $k[H/I]$-modules for $0\le j \le \#I-1$. Then we use that $X/I \to X/H$ is tamely ramified, together with (\ref{eq:Nakajimasequence}),
to prove that the $k[H/I]$-module structure of $\HH^0(X,\Omega_X^{(j+1)}/\Omega_X^{(j)})$, for $0\le j \le \#I-1$, is uniquely
determined by the $p'$-parts of the (non-trivial) inertia groups of the cover $X\to X/H$ and their fundamental characters.
Finally, we argue, using (\ref{eq:isom}), that this is sufficient to obtain the $k[H]$-module structure of $\HH^0(X,\Omega_X)$.

The paper is organized as follows. 
In \S \ref{s:prelim}, we recall some well known definitions regarding finite groups acting on schemes and sheaves.
In \S \ref{s:conlonreduction}, we show how to reduce the proof
of Theorem \ref{thm:main} to the case of $p$-hypo-elementary subgroups $H$ of $G$, 
using the Conlon induction theorem (see Lemma \ref{lem:conlonreduce}). 
We also reduce to the case when $k$ is algebraically
closed. In \S \ref{s:proof}, we first prove Theorem \ref{thm:main} when $G=H$ is 
$p$-hypo-elementary; see Propositions \ref{prop:filter} and \ref{prop:fundamental}
for the key steps. We then summarize these key steps of the proof in Remark
\ref{rem:algorithm}. In \S \ref{s:modular}, we discuss
the holomorphic differentials of the reductions of the modular curves $\mathcal{X}(\ell)$ modulo $p$,
and we prove Theorems \ref{thm:firstmodtheorem} and \ref{thm:secondmodtheorem}
when $p>3$. In \S \ref{s:modular3}, we fully determine the
$k[\mathrm{PSL}(2,\mathbb{F}_\ell)]$-module structure of  $\HH^0(X_3(\ell),\Omega_{X_3(\ell)})$
when $k$ is an algebraically closed field containing $\overline{\mathbb{F}}_3$;
see Propositions \ref{prop:fulldifferent1} - \ref{prop:fullequal2} for the precise statements. In particular, 
this proves Theorem \ref{thm:modularresult}, which we then use to prove Theorems \ref{thm:firstmodtheorem} 
and \ref{thm:secondmodtheorem} when $p=3$.

\section{Finite groups acting on schemes and sheaves}
\label{s:prelim}

In this section, we recall some well known definitions regarding finite groups acting on schemes and sheaves.  We will also set up some notation which will
be used later in this paper. 

Let $X$ be a Noetherian scheme, locally separated over a field $k$, and let $H$ be a finite group acting on the right on $X$ over $k$. 
We view $H$ as a constant group scheme over $k$,
and we write $m:H\times_k H\to H$ for the group law and $e:k\to H$ for the identity section of $H$.
Let $\vartheta: X\times_k H\to X$ denote the right action of $H$ on $X$, which on 
points we denote by $(x,h)\mapsto x\cdot h$. Let $p_1:X\times_k H\to X$ denote the natural projection. 

We recall from \cite[\S 1.2]{Thomason} (see also \cite[\S 1.3]{MumfordInvariantTheory}) the notion of a quasi-coherent $\mathcal{O}_X$-$H$-module $\mathcal{F}$. The concept of an $\mathcal{O}_X$-$H$-module goes back to Grothendieck (see, for example, \cite[Chap. V]{GroTo}).  
 Such an $\mathcal{F}$ is also called a quasi-coherent $H$-sheaf (or $H$-equivariant sheaf) on $X$. 
An  $\mathcal{F}$ of this kind is defined to be a quasi-coherent sheaf of $\mathcal{O}_X$-modules, together with an isomorphism of $\mathcal{O}_{X\times_kH}$-modules
$$\phi:\vartheta^*\mathcal{F}\to p_1^*\mathcal{F}.$$
This isomorphism $\phi$ must be associative, in the sense that it satisfies
the cocycle condition
\begin{equation}
\label{eq:cocycle}
\left(p_{12}^* \phi\right) \circ \left((\vartheta \times 1_H)^* \phi\right) = (1_X \times m)^* \phi
\end{equation}
 on $X\times_k H\times_kH$,
where $p_{12}:X\times_k H\times_kH\to X\times_kH$ denotes the projection onto the first and second components.
On the stalk level, the cocycle condition says that the isomorphism 
$\mathcal{F}_{x \cdot hh'} \cong \mathcal{F}_x $  is the same as the composition 
$\mathcal{F}_{(x\cdot h) \cdot h'} \cong \mathcal{F}_{x \cdot h} \cong \mathcal{F}_x$, 
i.e., the associativity of the group action. The unitarity of the group action is also a consequence. Namely, applying $(1_X\times e \times e)^*$ to both sides of (\ref{eq:cocycle}) we get 
${\displaystyle (1_X\times e)^{*}\phi \circ (1_X\times e)^{*}\phi =(1_X\times e)^{*}\phi }$ and so 
$(1_X \times e)^* \phi$ is the identity. 

Equivalently  (compare with \cite[\S 1.2.5]{CEPT1997}), a quasi-coherent $\mathcal{O}_X$-$H$-module can be defined to be a quasi-coherent sheaf $\mathcal{F}$ of $\mathcal{O}_X$-modules with a compatible action of $H$ in the following sense.  Suppose $x \in X$ and $h \in H$.  The action of $h \in H$ on $X$ and on $\mathcal{F}$ gives isomorphisms
of stalks $\mathcal{O}_{X,x \cdot h} \to \mathcal{O}_{X,x}$ and $\mathcal{F}_{x \cdot h} \to \mathcal{F}_x$, which we will both denote by $h$. We require $h(\alpha \cdot f) = h(\alpha) \cdot h(f)$ for $\alpha \in \mathcal{O}_{X,x \cdot h}$ and $f \in \mathcal{F}_{x \cdot h}$.

If $\mathcal{F}$ is moreover coherent (resp. locally free coherent) as an $\mathcal{O}_X$-module, we will call $\mathcal{F}$ a coherent (resp. locally free coherent) $\mathcal{O}_X$-$H$-module.

The concept of an $\mathcal{O}_X$-$H$-module can be viewed as the sheafification of the concept of modules for skew group algebras. More precisely, if $B$ is a $k$-algebra and $H$ acts by left $k$-algebra automorphisms on $B$,  we can form the skew group algebra 
$$B\rtimes [H]=\left\{ \sum_{h\in H} b_h\cdot h\;;\; b_h\in B\right\}.$$
Here addition on $B\rtimes [H]$ is natural and multiplication is defined distributively using $h\cdot b=h(b) \cdot h$, where $h(b)$ denotes the image of $b\in B$ under the action of $h\in H$. If $U=\mathrm{Spec}(B)$ is an affine open set of $X$ that is taken to itself by the action of $H$, and $\mathcal{F}$ is an $\mathcal{O}_X$-$H$-module, then $\mathcal{F}(U)$ is just a module for the skew group algebra $B\rtimes [H]$.

An important example of a coherent $\mathcal{O}_X$-$H$-module, which will be of interest to
us, is the sheaf $\Omega_X$ of relative differentials of $X$ over $k$ with the natural action of $H$ on $\Omega_X$ resulting from the action of $H$ on $\mathcal{O}_X$.  If $X$ is  a smooth projective curve over $k$, then $\Omega_X$ is moreover locally free of rank one as an $\mathcal{O}_X$-module.

By \cite[Expos\'e V, Prop. 1.8]{SGA:1}, a necessary and sufficient condition for the existence of a 
quotient scheme $Z = X/H$ is that the $H$-orbit of every point of $X$ is contained in an open affine subset of $X$. Equivalently, $X$ can be covered by affine open sets of the form $U=\mathrm{Spec}(B)$ that are taken to themselves by the action of $H$. 
This will always be the case, for example, if $X$ is quasi-projective.  

Suppose now that the quotient scheme $Z=X/H$ exists, and let $I$ be a 
subgroup of $H$. By \cite[Expos\'e V, Cor. 1.7]{SGA:1}, the quotient scheme $Y=X/I$
also exists, and we let $\pi:X\to Y=X/I$ denote the quotient morphism. 
Let $\mathcal{F}$ be a quasi-coherent $\mathcal{O}_X$-$H$-module.
Then $\pi_* \mathcal{O}_X$ is a sheaf of rings on $Y$, and $\pi_* \mathcal{F}$ is a quasi-coherent sheaf of $\pi_* \mathcal{O}_X$-modules
with an action of $H$ that is compatible with the action of $H$ on $\pi_* \mathcal{O}_X$ over $\mathcal{O}_Y$.  We have a natural homomorphism
$\mathcal{O}_Y \to \pi_* \mathcal{O}_X$ of sheaves of rings on $Y$.
Therefore, we can view $\pi_* \mathcal{F}$ as a quasi-coherent $\mathcal{O}_Y$-$H$-module. Note that if $\mathcal{F}$ is coherent (resp. locally free coherent) as an $\mathcal{O}_X$-module, then so is $\pi_*\mathcal{F}$ as an $\mathcal{O}_Y$-module.
Moreover, if $\mathcal{G}$ is a quasi-coherent $\mathcal{O}_Y$-$H$-module then $\pi_*\mathcal{F} \otimes_{\mathcal{O}_Y} \mathcal{G}$ is also a quasi-coherent $\mathcal{O}_Y$-$H$-module by letting $H$ act diagonally. 

Suppose finally that $I$ is a normal subgroup of $H$, and that $J$ is an ideal of $k[I]$ that is taken to itself by the  conjugation action of
$H$ on $I$.  Since $I$ acts trivially on $\mathcal{O}_Y$, we can  regard $\pi_*\mathcal{F}$ as a module for the sheaf of group
rings $\mathcal{O}_Y[I]$ on $Y$.  We define the kernel $\mathcal{K} = \mathcal{K}(\mathcal{F},I,J)$ of $J$ acting on $\pi_* \mathcal{F}$
to be the  sheaf of $\mathcal{O}_Y$-modules having sections over each open set $V$ of $Y$ equal to the kernel of $J$ acting on
$\pi_* \mathcal{F}(V)$.   Since $J$ was assumed to be taken to itself by the conjugation action of $H$ on $k[I]$, $\mathcal{K}$
will in fact be a quasi-coherent $\mathcal{O}_Y$-$H$-module.

\section{Reduction to $p$-hypo-elementary subgroups and algebraically closed base fields}
\label{s:conlonreduction}

Let $k$ be a perfect field of positive characteristic $p$, and suppose $G$ is a finite group such that $p$ divides $\#G$.
In this section, we show how we can reduce the problem of finding the $k[G]$-module structure of a finitely generated
$k[G]$-module $M$ to determining the $k[H]$-module structure of the restrictions of $M$ to all $p$-hypo-elementary subgroups $H$ of $G$.
We follow \cite[\S80D]{CRII} and \cite[\S5.6]{benson}. At the end of this section, we show how we
can further reduce to the case when $k$ is algebraically closed.

\begin{definition}
\label{def:needthis}
\begin{itemize}
\item[(a)]
Let $a(k[G])$ be the representation ring, also called the Green ring, of $k[G]$. This is the ring consisting of 
$\mathbb{Z}$-linear combinations of symbols $[M]$, one for each isomorphism class of finitely generated $k[G]$-modules $M$, 
with relations
$$ [M] + [M'] = [M\oplus M'].$$
Multiplication is defined by 
the tensor product over $k$
$$ [M] \cdot [M'] = [M\otimes_k M']$$
where $G$ acts diagonally on $M\otimes_k M'$.
Since the Krull-Schmidt-Azumaya theorem holds for finitely generated  $k[G]$-modules, it follows that
$a(k[G])$ has a $\mathbb{Z}$-basis consisting of all $[M]$ with $M$ finitely generated indecomposable. 
Moreover, $[M]=[M']$ if and only if $M\cong M'$ as $k[G]$-modules. Define
$$A(k[G])=\mathbb{Q}\otimes_{\mathbb{Z}} a(k[G])$$
which is called the representation algebra. Then $a(k[G])$ is embedded into $A(k[G])$ as a subring, and both have
the same identity element $[k_G]$, where $k_G$ denotes the trivial simple $k[G]$-module. We also have induction maps
$$a(k[H])\to a(k[G])\quad\mbox{and}\quad A(k[H])\to A(k[G])$$
for each subgroup $H\le G$. 

\item[(b)] A $p$-hypo-elementary group is a group $H$ such that $H=P\rtimes C$,
where $P$ is a normal $p$-subgroup and $C$ is a cyclic $p'$-group. 
We denote the set of $p$-hypo-elementary subgroups of $G$ by $\mathcal{H}'$.
\end{itemize}
\end{definition}

The Conlon induction theorem \cite[Thm. (80.51)]{CRII} says that there is a relation
\begin{equation}
\label{eq:conlon1}
[k_G] = \sum_{H\in\mathcal{H}'} \alpha_H\, [\mathrm{Ind}_H^G(k_H)]
\end{equation}
in $A(k[G])$, for certain rational numbers $\alpha_H$.
Since by \cite[Cor. (10.20)]{CRI},
$$M\otimes_k \mathrm{Ind}_H^G(k_H) \cong \mathrm{Ind}_H^G(M_H\otimes_k k_H)\cong \mathrm{Ind}_H^G(M_H)$$
for every finitely generated $k[G]$-module $M$,
(\ref{eq:conlon1}) implies that we have the relation
\begin{equation}
\label{eq:conlon}
[M] = \sum_{H\in\mathcal{H}'} \alpha_H\, [\mathrm{Ind}_H^G(M_H)]
\end{equation}
in $A(k[G])$, for the same rational numbers $\alpha_H$ as in (\ref{eq:conlon1}).
In other words, if $M'$ is another finitely generated $k[G]$-module such that $[M_H]=[M'_H]$ in $a(k[H])$ for all $H\in\mathcal{H}'$,
then $[M]=[M']$ in $A(k[G])$, and hence in $a(k[G])$. 
In particular, this proves the following result.

\begin{lemma}
\label{lem:conlonreduce}
Suppose $M$ is a finitely generated $k[G]$-module. Then the
decomposition of $M$ into its indecomposable direct $k[G]$-module summands is uniquely determined by
the decompositions of the restrictions $M_H$ of $M$ into a direct sum of indecomposable $k[H]$-modules
as $H$ ranges over all elements in $\mathcal{H}'$.
\end{lemma}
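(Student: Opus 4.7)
The plan is to deduce the lemma as a direct consequence of the Conlon-type identity (\ref{eq:conlon}) established just above the statement, together with the Krull--Schmidt--Azumaya theorem. The argument is essentially present already in the preceding paragraph and only needs to be organised as a formal comparison between two $k[G]$-modules whose restrictions to every $H \in \mathcal{H}'$ have identical indecomposable decompositions.

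First, I would fix two finitely generated $k[G]$-modules $M$ and $M'$ whose restrictions $M_H$ and $M'_H$ have the same decomposition into indecomposable $k[H]$-modules for every $H \in \mathcal{H}'$. By Krull--Schmidt--Azumaya applied over $k[H]$, this is equivalent to the equality $[M_H] = [M'_H]$ in $a(k[H])$ for every $H \in \mathcal{H}'$. Since induction from $H$ to $G$ is additive on direct sums, it preserves this equality, giving $[\mathrm{Ind}_H^G(M_H)] = [\mathrm{Ind}_H^G(M'_H)]$ in $a(k[G])$, and hence also in $A(k[G])$.

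Next, I would apply (\ref{eq:conlon}) to each of $M$ and $M'$ and subtract, obtaining
\begin{equation*}
[M] - [M'] = \sum_{H \in \mathcal{H}'} \alpha_H \bigl([\mathrm{Ind}_H^G(M_H)] - [\mathrm{Ind}_H^G(M'_H)]\bigr) = 0
\end{equation*}
in $A(k[G])$. Because $a(k[G])$ is embedded as a subring of $A(k[G])$ sharing the same identity, the equality $[M] = [M']$ persists in $a(k[G])$. Since the isomorphism classes of finitely generated indecomposable $k[G]$-modules form a $\mathbb{Z}$-basis of $a(k[G])$, this forces $M$ and $M'$ to have the same list of indecomposable summands with the same multiplicities, so $M \cong M'$ as $k[G]$-modules.

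There is no genuine obstacle here; the only conceptual subtlety is that (\ref{eq:conlon}) lives in $A(k[G])$ rather than in $a(k[G])$, because the Conlon coefficients $\alpha_H$ are in general only rational. The passage from rational equality in $A(k[G])$ back to integral equality in $a(k[G])$ via the subring embedding is exactly what licenses invoking Krull--Schmidt--Azumaya to promote equality of classes into a genuine $k[G]$-module isomorphism, and completes the proof.
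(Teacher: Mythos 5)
Your proposal is correct and is essentially the paper's own argument: the paper derives the lemma from the Conlon relation (\ref{eq:conlon}) by exactly this comparison of $[M]$ and $[M']$ in $A(k[G])$, followed by the passage back to $a(k[G])$ and the Krull--Schmidt--Azumaya theorem. Nothing is missing.
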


\begin{remark}
\label{rem:bettercyclic}
Suppose $M$ is as in Lemma \ref{lem:conlonreduce}, and suppose we know the explicit decomposition of 
$M_H$ into a direct sum of
indecomposable $k[H]$-modules for all $H\in\mathcal{H}'$. If $G$ does not have cyclic Sylow $p$-subgroups, there might be
infinitely many non-isomorphic indecomposable $k[G]$-modules of $k$-dimension less than or equal to $\mathrm{dim}_k\,M$.
To determine explicitly the decomposition of $\mathrm{Ind}_H^G(M_H)$ into a direct sum of indecomposable $k[G]$-modules in
(\ref{eq:conlon}), we have to test in principle all of these to see if they could be direct summands.

However, if $G$ has cyclic Sylow $p$-subgroups, then there are only finitely many isomorphism classes of
indecomposable $k[G]$-modules, and also only finitely many isomorphism classes of indecomposable $k[H]$-modules,
for all $H\in\mathcal{H}'$. Moreover, one can use the Green correspondence \cite[Thm. (20.6)]{CRI} to obtain a different, more
explicit, proof that the $k[G]$-module structure of $M$ is uniquely determined by the $k[H]$-module structure of $M_H$, as
$H$ ranges over all elements in $\mathcal{H}'$. 

Namely, if $P$ is a cyclic Sylow $p$-subgroup of $G$ (not necessarily unique), let $P_1$ be the unique subgroup of $P$ of order $p$,
and let $N_1$ be the normalizer of $P_1$ in $G$. 
The Green correspondence shows that induction and restriction sets up a one-to-one correspondence between the isomorphism 
classes of indecomposable non-projective $k[G]$-modules and the isomorphism classes of indecomposable non-projective 
$k[N_1]$-modules. By work of Dade \cite{dade} (and in particular, \cite[Thm. 5]{dade}), it follows (in the case when $k$ contains
all $(\#G)^{\mathrm{th}}$ roots of unity) that the indecomposable $k[N_1]$-modules are all uniserial, and hence uniquely determined
by their top radical layer and their composition series length (see, e.g., \cite[\S IV.2]{ARS} for a discussion of uniserial modules). Using a filtration of the $k[N_1]$-modules by powers of the augmentation
ideal of $k[P_1]$, one then proves that the $k[N_1]$-module structure of $M$ is uniquely determined by the restrictions $M_H$
to elements $H\in\mathcal{H}'$. 
\end{remark}

For the remainder of the paper, we assume, as in Theorem \ref{thm:main},
that $G$ has non-trivial cyclic Sylow $p$-subgroups. 
Then every $p$-hypo-elementary subgroup $H$ of $G$ has a unique 
non-trivial cyclic Sylow $p$-subgroup. 

Suppose $H=P\rtimes_{\psi}C$, where $P=\langle \sigma \rangle\cong \mathbb{Z}/p^n$
and $C=\langle \rho\rangle$ is a cyclic $p'$-group of order $c$. Then $\mathrm{Aut}(P)\cong \mathbb{F}_p^*\times Q$ for an abelian $p$-group $Q$, and
$\psi:C\to \mathrm{Aut}(P)$ factors through a character $\chi:C\to \mathbb{F}_p^*$. To emphasize this character, we write
$H=P\rtimes_{\chi}C$. Note that the order of $\chi$ divides $(p-1)$, which means in particular that $\chi^{p-1}=\chi^{-(p-1)}$ is the trivial one-dimensional
character. For all $i\in\mathbb{Z}$, $\chi^i$ defines a simple $k[C]$-module of 
$k$-dimension one, which we denote by $T_{\chi^i}$. We also view $T_{\chi^i}$ as
a $k[H]$-module by inflation.

Let $\overline{k}$ be a fixed algebraic closure of $k$, and  let $\zeta$ be a primitive $c^{\mathrm{th}}$ root of unity in $\overline{k}$.
For $0\le a\le c-1$, let $S_a$ be the simple $\overline{k}[C]$-module on which $\rho$ acts as $\zeta^a$. We also view $S_a$ as a $\overline{k}[H]$-module by inflation.
Moreover, for $i\in \mathbb{Z}$, define $S_{\chi^i}=\overline{k}\otimes_kT_{\chi^i}$ and, for $0\le a\le c-1$,
define $\chi^i(a)\in\{0,1,\ldots,c-1\}$ to be such that $S_{\chi^i(a)}\cong S_a\otimes_{\overline{k}} S_{\chi^i}$.

The following remark describes the indecomposable $\overline{k}[H]$-modules
(see, e.g., \cite[pp. 35-37 \& 42-43]{Alperin}).

\begin{remark}
\label{rem:indecomposables}
Let $H=P\rtimes_{\chi}C$ be a $p$-hypo-elementary group, where $P=\langle\sigma\rangle$, $C=\langle \rho\rangle$ and $\chi:C\to \mathbb{F}_p^*$
is a character, and use the notation introduced in the previous two paragraphs.
The projective cover of the trivial simple $\overline{k}[H]$-module $S_0$  is uniserial, in the sense that it has a unique composition series,
with $p^n$ \textbf{ascending} composition factors of the form
\begin{equation}
\label{eq:trivialsimple}
S_0,S_{\chi^{-1}},S_{\chi^{-2}},\ldots,S_{\chi^{-(p-2)}},S_0,S_{\chi^{-1}},\ldots,S_{\chi^{-(p-2)}},S_0.
\end{equation}
More generally, the projective cover of the simple $\overline{k}[H]$-module $S_a$, for $0\le a\le c-1$, is uniserial
with $p^n$ \textbf{ascending} composition factors of the form
\begin{equation}
\label{eq:oy!!!}
S_a,S_{\chi^{-1}(a)},S_{\chi^{-2}(a)},\ldots,S_{\chi^{-(p-2)}(a)},S_a,S_{\chi{-1}(a)},\ldots,S_{\chi^{-(p-2)}(a)},S_a.
\end{equation}
There are precisely $\#H$ isomorphism classes of  indecomposable $\overline{k}[H]$-modules, and they are all uniserial.
If $U$ is an indecomposable $\overline{k}[H]$-module, then it is uniquely determined by its socle, which is the kernel of the action of $(\sigma -1)$ on
$U$, and its $k$-dimension. 
For $0\le a\le c-1$ and $1\le b\le p^n$, let $U_{a,b}$ be an indecomposable $\overline{k}[H]$-module with socle $S_a$ and $k$-dimension $b$.
Then $U_{a,b}$ is uniserial and its $b$ ascending composition factors are equal to the first $b$ ascending composition factors in (\ref{eq:oy!!!}).
\end{remark}

We next show how we can reduce to the case when $k$ is algebraically closed
when considering indecomposable $k[H]$-modules.

Let $Z_1,\ldots,Z_d$ be the distinct orbits of $\{\zeta^a\;;\;0\le a\le c-1\}$ under the action of $\mathrm{Gal}(\overline{k}/k)$.
For $1\le j\le d$, let $S_{Z_j}$ be the direct sum of the $S_a$ for $a\in Z_j$. 

\begin{proposition}
\label{prop:reducealgclosed}
Let $H=P\rtimes_{\chi}C$ be a $p$-hypo-elementary group as in Remark $\ref{rem:indecomposables}$.
\begin{itemize}
\item[(i)] The number of isomorphism classes of simple $k[C]$-modules is equal to $d$. Moreover,
for each $1\le j\le d$, there exists a simple $k[C]$-module $T_j$ with $\overline{k}\otimes_k T_j\cong S_{Z_j}$. 
\item[(ii)] The number of isomorphism classes of indecomposable $k[H]$-modules is equal to
$d\cdot p^n$. Moreover, for each $1\le j\le d$ and each $1\le t \le p^n$, there exists a uniserial
$k[H]$-module $V_{j,t}$ such that 
$\overline{k}\otimes_k \mathrm{soc}(V_{j,t})\cong S_{Z_j}$
and such that $\overline{k}\otimes_k V_{j,t}$ is a direct sum of indecomposable
$\overline{k}[H]$-modules of $k$-dimension $t$ that all lie in a single orbit under the action of 
$\mathrm{Gal}(\overline{k}/k)$.
\item[(iii)] If $M$ is a finitely generated $k[H]$-module, then its decomposition into a direct sum of
indecomposable $k[H]$-modules is uniquely determined by the decomposition of $\overline{k}\otimes_kM$
into a direct sum of indecomposable $\overline{k}[H]$-modules
\end{itemize}
\end{proposition}

\begin{proof}
Let $T$ be a simple $k[C]$-module. Since $c$ is relatively prime to $p$, $\overline{k}\otimes_kT$
is a direct sum of simple $\overline{k}[C]$-modules that lie in precisely one Galois orbit under the action
of $\mathrm{Gal}(\overline{k}/k)$. In other words, there exists a unique $j\in\{1,\ldots,d\}$ with
$\overline{k}\otimes_kT\cong S_{Z_j}$. This proves part (i).

For part (ii), we use the description of the projective cover $Q_0$ of the trivial simple $\overline{k}[H]$-module $S_0$
in Remark \ref{rem:indecomposables}, and in particular the description of its ascending composition factors in
(\ref{eq:trivialsimple}).
Since $\chi$ is a character with values in $\mathbb{F}_p^*\subseteq k^*$, 
this means that $Q_0$ is realizable over $k$, i.e., $Q_0=\overline{k}\otimes_kP_0$, where $P_0$ is the 
projective cover of the trivial simple $k[H]$-module. In particular, if $S_{Z_1}=\{S_0\}$, then,
for all $1\le t\le p^n$, there exists an indecomposable $k[H]$-module $V_{1,t}$ of $k$-dimension
$t$ with $\overline{k}\otimes_k \mathrm{soc}(V_{1,t})\cong S_{Z_1}$. Let $j\in
\{1,\ldots,d\}$ be arbitrary. Then, for all $1\le t \le p^n$, $T_j\otimes_k V_{1,t}$ is a uniserial 
$k[H]$-module of $k$-dimension equal to $(\mathrm{dim}_kT_j)t = (\# Z_j)t$, with $t$ ascending
composition factors $T_j,T_{\chi^{-1}}\otimes_kT_j,T_{\chi^{-2}}\otimes_kT_j,\ldots$.
Now suppose $V$ is an arbitrary indecomposable $k[H]$-module. 
Write $\overline{k}\otimes_k V$ as a direct sum of indecomposable $\overline{k}[H]$-modules.
The socle layers $W_1$ and $W_2$ of two of these summands are in the same Galois orbit if and only
if for all integers $i\ge 0$, $S_{\chi^{-i}} \otimes_{\overline{k}} W_1$ and $S_{\chi^{-i}} \otimes_{\overline{k}} W_2$
are in the same Galois orbit.  Since the socle layers of $V$ are $k[H]$-modules, it follows that 
$\overline{k} \otimes_k V$ is a sum of Galois orbits of indecomposable $\overline{k}[H]$-modules.
Since the sum of modules in a Galois orbit is an indecomposable $k[H]$-module, we conclude that there can
be only one such orbit since $V$ is indecomposable.  Hence  $V$ is isomorphic to $T_j\otimes_k V_{1,t}$ for some 
$1\le j\le d$ and $1\le t\le p^n$. This proves part (ii).
Part (iii) is an immediate consequence of part (ii).
\end{proof}

\section{Filtrations on differentials and ramification data}
\label{s:proof}

We assume throughout this section that $k$ is an algebraically closed field of characteristic $p>0$, and that 
$H=P\rtimes_{\chi}C$ is a $p$-hypo-elementary group, where $P=\langle \sigma\rangle$
is a cyclic $p$-group of order $p^n$, $C=\langle \rho\rangle$ is a cyclic $p'$-group of order $c$, and
$\chi:C\to \mathbb{F}_p^*$ is a character, as in the previous section. 
We again view $\chi$ as a character of $H$ by inflation, and denote, for all
$i\in \mathbb{Z}$, the one-dimensional $k[H]$-module corresponding to
$\chi^i$ by $S_{\chi^i}$.

Let $X$ be a smooth projective curve over $k$, and fix a
faithful right action of $H$ on $X$ over $k$.  
Then $X$ is a regular scheme of dimension one, and the sheaf $\Omega_X$ of holomorphic differentials of $X$ over $k$ is a coherent $\mathcal{O}_X$-$H$-module, as defined in \S \ref{s:prelim}, which is a locally free rank one $\mathcal{O}_X$-module. Throughout this section, we adopt the conventions and notation from \S \ref{s:prelim}.

Recall that if $x$ is a closed point of $X$ and $i \ge 0$, the $i^{\mathrm{th}}$ lower ramification subgroup $H_{x,i}$
of $H$ is the group of all elements in $H$ that fix $x$ and act trivially on $\mathcal{O}_{X,x}/\mathfrak{m}_{X,x}^{i+1}$.
Moreover, the fundamental character of the inertia group $H_x=H_{x,0}$  of $x$ is the character 
$\theta_x:H_x \to k^* = \mathrm{Aut}(\mathfrak{m}_{X,x}/\mathfrak{m}_{X,x}^2)$ giving the
action of $H_x$ on the cotangent space of $x$.  Since $\theta_x$ factors through the maximal
$p'$-quotient of $H_x$, $\theta_x$ is trivial if $H_x$ is a $p$-group. 
We will call the collection of the groups $H_{x,i}$ together with the characters $\theta_x$, 
as $x$ varies over the closed points of $X$ and $i$ ranges
over all non-negative integers, the ramification data associated to the action of $H$ on $X$.  

Let $I = \langle \tau \rangle$ be the (cyclic) subgroup of $P$ generated by the Sylow 
$p$-subgroups of the inertia groups of all closed points of $X$.  In particular, $I$ is a normal subgroup of $H$.
Let $Y$ be the quotient curve $X/I$, and let $\pi:X\to Y$ denote the quotient morphism.  
In particular, $Y$ is a regular scheme of dimension one, and hence a smooth projective curve over $k$, since $k$ is perfect. 
Then $\pi_*\mathcal{O}_X$ is an $\mathcal{O}_Y$-$H$-module, and we identify $\mathcal{O}_Y$ with the subsheaf of $I$-invariants of $\pi_*\mathcal{O}_X$.  
The Jacobson radical of the group
ring $k[I]$ is then $\mathcal{J} = k[I](\tau-1)$.  For all integers $j \ge 0$, let $\Omega_X^{(j)}$ denote the kernel
of the action of $\mathcal{J}^j = k[I](\tau-1)^j$ on $\pi_*\Omega_X$.  Because $\mathcal{J}^j$ is taken to itself by the conjugation action of $H$ on $I$, it follows as in \S \ref{s:prelim} that $\Omega_X^{(j)}$ is a quasi-coherent $\mathcal{O}_Y$-$H$-module.
Since $Y$ is a regular scheme of dimension one and $\Omega_X^{(j)}$ is a subsheaf of a locally free coherent $\mathcal{O}_Y$-module of finite rank,
$\Omega_X^{(j)}$ is also a locally free coherent $\mathcal{O}_Y$-module.  Thus in the terminology of  \S \ref{s:prelim},  $\Omega_X^{(j)}$ is a locally free coherent $\mathcal{O}_Y$-$H$-module.
If $D$ is a divisor on $Y$, then we will denote by $\Omega_Y(D)$ the tensor product $\Omega_Y \otimes_{\mathcal{O}_Y} \mathcal{O}_Y(D)$.  

\begin{proposition}
\label{prop:filter}
For $0 \le j \le \# I - 1$, the action of $\mathcal{O}_Y$ and of $H$ on $\pi_*\Omega_X$ makes the quotient sheaf  
$\mathcal{L}_j = \Omega_X^{(j+1)}/\Omega_X^{(j)}$ into a locally free coherent $\mathcal{O}_Y$-$H$-module.  There exists
an $H$-invariant divisor $D_j$ on $Y$ 
with the following properties:
\begin{itemize}
\item[(i)] The divisor $D_j$ may be determined from the ramification data associated
to the action of $I$ on $X$.
\item[(ii)] We have $D_{\# I -1} = 0$, and $D_j$ is effective of positive degree for $0\le j < \# I -1$.
\item[(iii)]  There is an isomorphism of locally free coherent $\mathcal{O}_Y$-$H$-modules between  $\mathcal{L}_j$ and
$S_{\chi^{-j}} \otimes_k \Omega_Y(D_j)$.
\end{itemize}
\end{proposition}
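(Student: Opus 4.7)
The plan is to proceed by a local analysis of the filtration $\{\Omega_X^{(j)}\}_{j=0}^{\#I}$ at each closed point of $X$, combined with the commutation identity $\rho\tau\rho^{-1}=\tau^m$, where $m\in\mathbb{Z}$ lifts $\chi(\rho)\in\mathbb{F}_p^*$, coming from the semi-direct product structure $H=P\rtimes_\chi C$.

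First I would fix a closed point $x$ of $X$ with inertia subgroup $J_x\subseteq I$ and lower ramification filtration $J_{x,0}=J_{x,1}\supseteq J_{x,2}\supseteq\cdots$, and choose a uniformizer $\pi_x$. A generator of $J_{x,i}\setminus J_{x,i+1}$ acts on $\pi_x$ as $\pi_x+u\pi_x^{i+1}+\cdots$ with $u$ a unit, which allows one to compute the action of $(\tau-1)^j$ on the completed stalk $\widehat{\mathcal{O}}_{X,x}\cdot d\pi_x$ and to identify explicit generators for each $\widehat{\Omega}_{X,x}^{(j)}$. A rank count then shows that at every closed point $y$ of $Y$ the sheaf $\pi_*\mathcal{L}_j$ is locally free of rank one over $\widehat{\mathcal{O}}_{Y,y}$: at unramified $y$ this follows from the regular-representation decomposition of $\pi_*\Omega_X$ at $y$, and at ramified $y$ from the explicit local description above.

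By comparing a local generator of $\mathcal{L}_j$ at $y$ with a local generator of $\Omega_Y$, I would extract a non-negative integer $d_{j,y}$ depending only on the lower ramification filtration of $J_x$ at any preimage $x$ of $y$, and set $D_j=\sum_y d_{j,y}\cdot y$. This divisor is $H$-invariant because $H$ permutes the points of $X$ above $y$ while preserving the ramification filtration up to conjugation in $I$, and $d_{j,y}$ is a conjugation invariant. For $j=\#I-1$, one can compare the top quotient $\Omega_X/\Omega_X^{(\#I-1)}$ directly with the inclusion $\pi^*\Omega_Y=\Omega_X(-\mathfrak{d}_{X/I})\hookrightarrow\Omega_X$, where $\mathfrak{d}_{X/I}$ is the different divisor, and verify that the image of $\pi^*\Omega_Y$ surjects onto $\mathcal{L}_{\#I-1}$; this gives $D_{\#I-1}=0$. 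For $j<\#I-1$, the local computation shows $d_{j,y}>0$ at any $y$ with non-trivial inertia, and since $I$ is generated by inertia groups, such a $y$ must exist, so $D_j$ is effective of positive degree. Together these establish parts (i) and (ii).

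For (iii), I would use the $\mathcal{O}_Y$-linear injection
\[\Phi_j=(\tau-1)^j:\mathcal{L}_j=\Omega_X^{(j+1)}/\Omega_X^{(j)}\longrightarrow\Omega_X^{(1)}=\mathcal{L}_0,\]
together with the expansion $(\tau^m-1)^j=\chi(\rho)^j(\tau-1)^j+(\tau-1)^{j+1}Q(\tau-1)$ for some $Q\in k[T]$. Since $(\tau-1)^{j+1}$ annihilates $\Omega_X^{(j+1)}$, the remainder vanishes on $\mathcal{L}_j$, yielding $\rho\cdot\Phi_j(\omega)=\chi(\rho)^j\Phi_j(\rho\omega)$, so $\Phi_j$ is $\chi^{-j}$-equivariant for the $C$-action. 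Combined with the identification $\mathcal{L}_0\cong\Omega_Y(D_0)$ and the local degree computations above, this produces the desired isomorphism $\mathcal{L}_j\cong S_{\chi^{-j}}\otimes_k\Omega_Y(D_j)$ of $\mathcal{O}_Y[H]$-modules. The main technical obstacle is the explicit local identification of $\Omega_X^{(j)}$ in terms of the ramification breaks of $J_x$, carried out so that one simultaneously verifies the rank-one property of $\pi_*\mathcal{L}_j$ and reads off the correct local exponent $d_{j,y}$; the boundary case $D_{\#I-1}=0$ is a related subtlety, requiring a careful compatibility of the top graded piece with the different divisor.
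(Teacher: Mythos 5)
Your overall architecture matches the paper's: filter by kernels of $(\tau-1)^j$, show each graded piece is a line bundle on $Y$, use the commutation relation $\rho\tau\rho^{-1}=\tau^{\chi(\rho)}$ to produce the twist by $\chi^{-j}$, and read off $D_j$ from local data. Your part (iii) argument is essentially the paper's. But there are concrete gaps in the parts you defer to ``the explicit local description.'' First, the heart of the proposition is computing the kernel filtration of $(\tau-1)^j$ on the stalks, and knowing that a generator of $J_{x,i}\setminus J_{x,i+1}$ sends $\pi_x$ to $\pi_x+u\pi_x^{i+1}+\cdots$ does not give you this: for a cyclic group of order $p^{n(x)}$ with $n(x)>1$ and for exponents $j$ that are not powers of $p$, the operator $(\tau-1)^j$ has no evident local normal form, and $v_x((\tau-1)^j f)$ is not a function of $v_x(f)$. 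The paper resolves this by choosing global Artin--Schreier generators $z_l$ (with $\mathrm{ord}_x(\lambda_l)$ prime to $p$, arranged via Riemann--Roch) and forming the monomial basis $w_t=z_1^{a_{1,t}}\cdots z_{n(x)}^{a_{n(x),t}}$, on which $(\tau_x-1)^t w_t$ is a nonzero scalar; the decisive point is that the valuations $\mathrm{ord}_x(c_sw_s)$ are pairwise distinct modulo $p^{n(x)}$, which is what lets one read off $d_{y,j}$ from the ramification breaks. Nothing in your sketch substitutes for this mechanism. Second, at a point $x$ whose inertia $I_x$ is a proper subgroup of $I$, the generator $\tau$ does not fix $x$, so the filtration by $(\tau-1)^j$ is not a local computation at $x$ at all; one must work on the whole fiber over $y$ and relate the $k[I]$-filtration to the $k[I_x]$-filtration, which the paper does via the tower formula for inverse differents and a local normal basis theorem for the \'etale quotient $Y_x\to Y$. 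Your proposal does not address this reduction.

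There is also an outright error in your argument for $D_{\#I-1}=0$. Since $(\tau-1)^{\#I-1}=\sum_{g\in I}g=\mathrm{Tr}_{K/L}$ in $k[I]$, the top graded piece is $\mathrm{Tr}(\mathcal{D}^{-1}_{X/Y})\otimes_{\mathcal{O}_Y}\Omega_Y$, and the correct input is that the trace of the \emph{inverse different} equals $\mathcal{O}_Y$. The image of $\pi^*\Omega_Y=\Omega_X(-\mathfrak{d}_{X/I})$ in $\mathcal{L}_{\#I-1}$ is instead $\mathrm{Tr}(\mathcal{O}_X)\otimes\Omega_Y$, and $\mathrm{Tr}(\mathcal{O}_X)\subsetneq\mathcal{O}_Y$ at every wildly ramified point --- and all ramification in $X\to X/I$ is wild here --- so $\pi^*\Omega_Y$ does \emph{not} surject onto $\mathcal{L}_{\#I-1}$. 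Relatedly, your claim that $d_{j,y}>0$ for all $j<\#I-1$ at every $y$ with nontrivial inertia is false when $I_x\subsetneq I$: the paper's formula gives $d_{y,j}=0$ already for $j\ge p^{i(x)}(p^{n(x)}-1)$. What saves (ii) is that $I$ is cyclic, so some point has full inertia $I_x=I$, and positivity holds there; you should argue through such a point rather than an arbitrary ramified one.
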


\begin{proof}
Let $K$ be the function field of $X$, and let $L=K^I$ be the function field of $Y=X/I$. 
Let $\mathcal{D}^{-1}_{X/Y}$ be the inverse different of $X$ over $Y$. In other words, $\mathcal{D}^{-1}_{X/Y}$
is the largest $\mathcal{O}_X$ fractional ideal in $K$ such that $\mathrm{Tr}_{K/L}(\mathcal{D}^{-1}_{X/Y})\subseteq
\mathcal{O}_Y$. 
Note that $\mathcal{D}^{-1}_{X/Y}$ is a coherent $\mathcal{O}_X$-$H$-module that is a locally free rank one $\mathcal{O}_X$-module.
By the projection formula \cite[Ex. II.5.1]{Hartshorne:77}, it follows that there are isomorphisms of $\mathcal{O}_Y$-$H$-modules
\begin{equation}
\label{eq:ohweh}
\pi_*\Omega_X \cong \pi_*(\mathcal{D}^{-1}_{X/Y}\otimes_{\mathcal{O}_X}\pi^*\Omega_Y) \cong 
\pi_*\mathcal{D}^{-1}_{X/Y}\otimes_{\mathcal{O}_Y}\Omega_Y.
\end{equation}
Fix $0 \le j \le \# I - 1$, and consider the short exact sequences of coherent
$\mathcal{O}_Y$-$H$-modules
\begin{equation}
\label{eq:sequences1}
\xymatrix {0\ar[r] &\Omega_X^{(j)} \ar[r] &\Omega_X^{(j+1)} \ar[r] &\mathcal{L}_j\ar[r] &0}
\end{equation}
and
\begin{equation}
\label{eq:sequences2}
\xymatrix {0\ar[r] &\mathcal{D}^{-1,(j)}_{X/Y} \ar[r] &\mathcal{D}^{-1,(j+1)}_{X/Y} \ar[r] &\mathcal{H}_j\ar[r] &0}
\end{equation}
where we again use the notation $\mathcal{D}^{-1,(j)}_{X/Y}$ for the kernel
of the action of $\mathcal{J}^j = k[I](\tau-1)^j$ on $\pi_*\mathcal{D}^{-1}_{X/Y}$.
In particular, since $I$ acts trivially on $\mathcal{O}_Y$ and $\Omega_Y$ and since  $-\otimes_{\mathcal{O}_Y}\Omega_Y$ is right exact, we can identify $\mathcal{L}_j=\mathcal{H}_j\otimes_{\mathcal{O}_Y}\Omega_Y$ as coherent $\mathcal{O}_Y$-$H$-modules.

We now show that $\mathcal{L}_j$ is a line bundle for $\mathcal{O}_Y$. 
Let $\eta_X$ (resp. $\eta_Y$) be the generic point on $X$ (resp. $Y$). Then for all $y\in Y$ and all $j\ge 0$, there is a canonical homomorphism $(\Omega_X^{(j)})_y\to (\Omega_X^{(j)})_{\eta_Y}$ between stalks.  
Since $(\Omega_X^{(j)})_{\eta_Y}$ is a vector space over $L=k(Y)$
and $\Omega_X^{(j)}$ is a locally free coherent $\mathcal{O}_Y$-module, 
it follows that this homomorphism is injective.
On the other hand, we can identify the stalk $(\pi_*\Omega_X)_{\eta_Y}=(\Omega_X)_{\eta_X}$ with the
relative differentials $\Omega^1_{K/k}$ of $K/k$. We can write 
$\Omega^1_{K/k}= K\, dt$ for some $t\in K^H$. For all integers $j\ge 0$,
we again write $(\Omega^1_{K/k})^{(j)}$ for the kernel of the action of $\mathcal{J}^j$ . 
In particular, we can identify $(\Omega_X^{(j)})_{\eta_Y}=(\Omega^1_{K/k})^{(j)}$.
We have a canonical injective homomorphism
$$(\mathcal{L}_j)_y=\frac{(\Omega_X^{(j+1)})_y}{(\Omega_X^{(j)})_y}\; \hookrightarrow \;\frac{(\Omega^1_{K/k})^{(j+1)}}{(\Omega^1_{K/k})^{(j)}}$$
whose image generates the right hand side as an $L$-vector space. 
Note that the module on the right is a one-dimensional vector space over $L=K^I$, since $K\cong L[I]$ as  $L[I]$-modules, by the
normal basis theorem, which means that $\Omega^1_{K/k}= K\, dt$ is also a free rank one $L[I]$-module.
Hence $(\mathcal{L}_j)_y$ 
is a non-zero $\mathcal{O}_{Y,y}$-submodule of a one-dimensional vector space over $L=k(Y)$ for all $y\in Y$ and it is one-dimensional when $y=\eta_Y$. This implies
that $\mathcal{L}_j$ is a  line bundle for $\mathcal{O}_Y$ since $Y$ is a regular scheme of dimension one. 

Since $\mathcal{L}_j=\mathcal{H}_j\otimes_{\mathcal{O}_Y}\Omega_Y$, we have that
$\mathcal{H}_j$ is also a line bundle for $\mathcal{O}_Y$. Because $\mathcal{H}_j=\mathcal{D}^{-1,(j+1)}_{X/Y}/
\mathcal{D}^{-1,(j)}_{X/Y}$, it follows that the map given by $(\tau-1)^j$ sends $\mathcal{H}_j$ onto an $\mathcal{O}_Y$-line 
bundle that is a subbundle of the constant sheaf on $Y$ associated to $L=K^I$.  We claim that there is 
an $H$-invariant divisor $D_j$ on $Y$
for which there is an isomorphism 
\begin{equation}
\label{eq:needthis}
(\tau-1)^j:\quad\mathcal{H}_j\to \mathcal{O}_Y(D_j)
\end{equation}
of $\mathcal{O}_Y$-modules.   To show this, first observe that since $H/I$ stabilizes $\mathcal{D}^{-1,(j+1)}_{X/Y}$
and $\mathcal{D}^{-1,(j)}_{X/Y}$, the class of $\mathcal{H}_j$ in $\mathrm{Pic}(Y)$ is fixed by the action of $H/I$.  
To show that there is an $H$-invariant divisor $D_j$ on $Y$ as in (\ref{eq:needthis}), it will be enough to show that $\mathrm{Div}(Y)^{H/I} \to \mathrm{Pic}(Y)^{H/I}$
is surjective.  We have
a natural exact sequence
\begin{equation}
\label{eq:fourterm}
0 \to k^* \to k(Y)^* \to \mathrm{Div}(Y) \to \mathrm{Pic}(Y) \to 0.
\end{equation}
On taking the $H/I$ cohomology of the two short exact sequences produced by (\ref{eq:fourterm}) and using
Hilbert's theorem 90, we conclude that it is enough to show $\HH^2(H/I,k^*) = 0$.  Here $k$ is algebraically closed of characteristic $p$
and $H/I$ is an extension of the cyclic $p'$-group $H/P$ by the normal cyclic $p$-subgroup $P/I$.  Since $\HH^q(P/I,k^*) = 0$
for $q > 0$, we find, using the corresponding Lyndon-Hochschild-Serre spectral sequence, that 
$$\HH^2(H/I,k^*) = \HH^2(H/P,H^0(P/I,k^*)) = \HH^2(H/P,k^*) = \hat{\HH}^0(H/P,k^*)  = 0$$
where $\hat{\HH}^0(H/P,k^*)$ denotes the $0^{\mathrm{th}}$
Tate cohomology group.  This establishes that there exists an $H$-invariant divisor $D_j$ on $Y$ as in (\ref{eq:needthis}).

Let now $V$ be an affine open set of $Y$ that is taken to itself by the action of $H$ and let
$f\in \mathcal{D}^{-1,(j+1)}_{X/Y}(V)\subset L$. Since $\tau$ commutes with $\sigma$, 
we obtain 
$$\sigma\,(\tau-1)^j f = (\tau-1)^j \, (\sigma\, f)$$
showing that (\ref{eq:needthis}) is an isomorphism of $\mathcal{O}_Y$-$P$-modules.
On the other hand, considering the generator $\rho$ of $C$ and using that
$\rho\,\sigma\,\rho^{-1}=\sigma^{\chi(\rho)}$, we see that
\begin{eqnarray*}
\rho\,(\tau-1)^j f&=&\rho\,(\tau-1)^j\,\rho^{-1}\,(\rho\, f)\\
&=& (\tau^{\chi(\rho)}-1)^j\,(\rho\, f)\\
&=& (\tau-1)^j \, (\chi(\rho)^j\,\rho\, f) 
\end{eqnarray*}
since $(\tau-1)^{j+1}\,\mathcal{D}^{-1,(j+1)}_{X/Y}(V)=0$. Therefore, we obtain that
\begin{equation}
\label{eq:needthis2}
(\tau-1)^j:\quad\mathcal{H}_j\to S_{\chi^{-j}}\otimes_k\mathcal{O}_Y(D_j)
\end{equation}
is an isomorphism of $\mathcal{O}_Y$-$H$-modules. In particular, (\ref{eq:needthis2}) gives an isomorphism of  
$\mathcal{O}_Y$-$H$-modules between  $\mathcal{L}_j$ and $S_{\chi^{-j}} \otimes_k \Omega_Y(D_j)$.

It remains to show that, for $j\in\{0,1,\ldots,\#I-1\}$, $D_j$ may be determined from the ramification data associated to the 
action of $I$ on $X$, and to establish the statements of part (ii). 
Using (\ref{eq:sequences2}) and (\ref{eq:needthis}), we identify $\mathcal{O}_Y(D_j)$ with the quotient sheaf 
$\mathcal{D}^{-1,(j+1)}_{X/Y}/\mathcal{D}^{-1,(j)}_{X/Y}$.
Recall that $L=K^I$ is the fixed field of $I=\langle \tau\rangle$. Write $\# I= p^{n_I}$, where 
$n_I\le n$, and write
$$D_j=\sum_{y\in Y} d_{y,j}\, y.$$

Fix a point $y\in Y$ and a point $x\in X$ above $y$. 
Let $I_x\subseteq I$ be the inertia group of $x$, which is cyclic of order $p^{n(x)}\le p^{n_I}$.
Let $i(x)=n_I-n(x)$ and $\tau_x=\tau^{p^{i(x)}}$, so that $I_x=\langle \tau_x\rangle$.
Define $L_x=K^{I_x}\supseteq K^I=L$, define $Y_x=X/I_x$, and let $y_x\in Y_x$ be a point above $y$ and below $x$. Note that $x$ is totally ramified over $y_x$ for the action of $I_x$,
and  $y$ splits into $p^{i(x)}$ points in $Y_x$, where $y_x$ is one of them.  
By the tower formula for inverse differents, we have
$$\mathcal{D}^{-1}_{X/Y} = \mathcal{D}^{-1}_{X/Y_x}\otimes_{\mathcal{O}_X}  
f_x^*\,\mathcal{D}^{-1}_{Y_x/Y}$$ 
where $f_x:X \to Y_x$ is the quotient map.  Since the quotient map 
$g_x:Y_x \to Y$ is \'etale over $y$, it follows that the stalk of 
$\mathcal{D}^{-1}_{Y_x/Y}$ is equal to the stalk of the structure sheaf 
$\mathcal{O}_{Y_x}$
at all points of $Y_x$ over $y$.  Hence at all points of $X$ over $y$, the stalks of 
$\mathcal{D}^{-1}_{X/Y}$ and $ \mathcal{D}^{-1}_{X/Y_x}$ are the same.  
It follows that if we take the inverse image $U_y=(g_x\circ f_x)^{-1}(V_y)\subset X$ of a 
sufficiently small  open neighborhood $V_y$ of $y$, then we have an equality
\begin{equation}
\label{eq:inversediff1}
\left(\mathcal{D}^{-1}_{X/Y}\right)\Big|_{U_y}=\left(\mathcal{D}^{-1}_{X/Y_x}\right)\Big|_{U_y}
\end{equation}
of the restrictions of the inverse differents 
$\mathcal{D}^{-1}_{X/Y}$ and  $\mathcal{D}^{-1}_{X/Y_x}$ to $U_y$.

We now determine $d_{y,j}$ using the
filtration of $\mathcal{D}^{-1}_{X/Y_x}$ coming from the powers of the Jacobson radical of the 
group ring $k[I_x]$, which is given as $\mathcal{J}_x = k[I_x](\tau_x-1)=k[I_x](\tau-1)^{p^{i(x)}}$. 
For all integers $t \ge 0$, let $\mathcal{D}^{-1,(t)}_{X/Y_x}$ be the kernel of the action of 
$\mathcal{J}_x^t = k[I_x](\tau_x-1)^t=k[I_x](\tau-1)^{p^{i(x)}t}$ on $(f_x)_*\mathcal{D}^{-1}_{X/Y_x}$. In particular, $\mathcal{D}^{-1,(t)}_{X/Y_x}$ is a coherent $\mathcal{O}_{Y_x}$-$H$-module.
Using the same arguments as in the first part of the proof, it follows that
for $0\le t\le \#I_x-1$, there exists an $H$-invariant divisor $D'_{t,x}$ on $Y_x$ such that
$$\mathcal{D}^{-1,(t+1)}_{X/Y_x}/\mathcal{D}^{-1,(t)}_{X/Y_x}\cong 
\mathcal{O}_{Y_x}(D'_{t,x})$$
as $\mathcal{O}_{Y_x}$-modules.
Writing
$$D'_{t,x}=  \sum_{y'\in Y_x} d'_{y',x,t}\, y'$$
we claim that 
\begin{equation}
\label{eq:reduce1}
d_{y,j}= d'_{y_x,x,t}\qquad\mbox{for all $t,j$ satisfying $p^{i(x)}t\le j< p^{i(x)}(t+1)$.}
\end{equation}

To see this, note that for all $y'\in Y_x$ lying over $y$ and for all $t\ge 0$, 
we have $d'_{y',x,t}=d'_{y_x,x,t}$. 
This means that locally, above $y$, the line bundle $\mathcal{O}_{Y_x}(D'_{t,x})$ for 
$\mathcal{O}_{Y_x}$ is the pullback of a line bundle for $\mathcal{O}_Y$. 
On the other hand, if we consider two consecutive powers
$\mathcal{J}_x^t$ and $\mathcal{J}_x^{t+1}$ of the radical $\mathcal{J}_x$ of $k[I_x]$,
then they generate in $k[I]$ the two powers $\mathcal{J}^{p^{i(x)}t}$ and 
$\mathcal{J}^{p^{i(x)}(t+1)}$ of the radical $\mathcal{J}$ of $k[I]$. 
Using (\ref{eq:inversediff1}), it follows that the restriction of the $\mathcal{O}_Y$-$H$-module
\begin{equation}
\label{eq:bigquotient}
\mathcal{D}^{-1,(p^{i(x)}(t+1))}_{X/Y}/\mathcal{D}^{-1,(p^{i(x)}t)}_{X/Y}
\end{equation}
to a sufficiently small neighborhood $V_y$ of $y$, 
is as a module for $\mathcal{O}_Y\big|_{V_y}$ given by $(g_x)_*\mathcal{O}_{Y_x}(D'_{t,x})$
restricted to $V_y$. 

Considering the quotient (\ref{eq:bigquotient}), there are $p^{i(x)}$ intermediate quotients 
$\mathcal{D}^{-1,(j+1)}_{X/Y}/\mathcal{D}^{-1,(j)}_{X/Y}$, for $p^{i(x)}t\le j< p^{i(x)}(t+1)$.
Hence, to prove the claim in (\ref{eq:reduce1}), it suffices 
to prove that in each of these intermediate quotients the multiplicity of $y$ in the corresponding 
divisor $D_j$, given by $d_{y,j}$, is the same as the multiplicity of $y_x$ in the
divisor $D'_{t,x}$, given by $d'_{y_x,x,t}$. 
To see this, we take a line bundle for $\mathcal{O}_{Y_x}$ of the form 
$g_x^*\, \mathcal{O}_Y(d'_{y_x,x,t}\,y)$, where $g_x:Y_x \to Y = (Y_x)/(I/I_x)$ 
is the quotient map, as above.  Recall that $g_x$ is \'etale over a sufficiently small neighborhood 
$V_y$ of $y$ in $Y$. 

We now consider the action of $I/I_x$ on $g_x^*\, \mathcal{O}_Y(d'_{y_x,x,t}\,y)$. 
By the projection formula \cite[Ex. II.5.1]{Hartshorne:77}, we have
\begin{equation}
\label{eq:gpull}
(g_x)_*\left(g_x^*\, \mathcal{O}_Y(d'_{y_x,x,t}\,y)\right) \cong
(g_x)_*\,\mathcal{O}_{Y_x} \otimes_{\mathcal{O}_Y} \mathcal{O}_Y(d'_{y_x,x,t}\,y)
\end{equation}
where the action of $I/I_x$ on $\mathcal{O}_Y(d'_{y_x,x,t}\,y)$ is trivial.  
We have a local normal basis theorem for the action of $I/I_x$ on $(g_x)_*\,\mathcal{O}_{Y_x}$ 
restricted to $V_y$, since $g_x:Y_x \to Y$ is \'etale over $V_y$. 
This means that the stalk $\left((g_x)_*\,\mathcal{O}_{Y_x}\right)_y$
is a free rank one module for $\mathcal{O}_{Y,y}[I/I_x]$.  
Using this fact together with the isomorphism (\ref{eq:gpull}), it follows that
for all $p^{i(x)}t\le j< p^{i(x)}(t+1)$, the quotient of $(g_x)_*(g_x^*\, \mathcal{O}_Y(d'_{y_x,x,t}\,y))$ with 
respect to the kernels of two successive powers $\overline{\mathcal{J}}^{j}$ and 
$\overline{\mathcal{J}}^{j+1}$ of the radical $\overline{\mathcal{J}}$ of $k[I/I_x]$
is an $\mathcal{O}_Y$-line bundle that looks like $\mathcal{O}_Y(d'_{y_x,x,t}\,y)$ 
in the neighborhood $V_y$ of $y$.
Identifying the quotient with respect to the kernels of $\overline{\mathcal{J}}^{j}$ and 
$\overline{\mathcal{J}}^{j+1}$ with the quotient with respect to the kernels of $\mathcal{J}^{j}$ 
and $\mathcal{J}^{j+1}$, for $p^{i(x)}t\le j< p^{i(x)}(t+1)$, the claim in (\ref{eq:reduce1}) follows.

We next show how the integers $d'_{y_x,x,t}$ in (\ref{eq:reduce1}), for $0\le t\le p^{n(x)}-1$, 
are determined by the ramification data associated to the action of $I_x$ on $X$.
If $I_x$ is the trivial subgroup of $I$, then $Y_x=X$ and hence $d'_{y_x,x,t}=0$ for all
$t\ge 0$. In particular, this means by (\ref{eq:reduce1}) that if $y\in Y$ does not ramify in
$X$ then $d_{y,j}=0$ for all $j\ge 0$.

Assume now that $I_x=\langle\tau_x\rangle$ is not the trivial subgroup of $I$. 
Recall that $\#(I_x)=p^{n(x)}$ and $L_x=K^{I_x}\supseteq K^I=L$.
Consider the unique tower of intermediate fields
\begin{equation}
\label{eq:intermediate}
L_x=L_0\subset L_1\subset \cdots \subset L_{n(x)}=K
\end{equation}
with $[L_l:L_{l-1}]=p$ for $1\le l\le n(x)$. In particular, each extension $L_l/L_{l-1}$ is an Artin-Schreier extension, meaning
there exist $z_l\in L_l$ and $\lambda_l\in L_{l-1}$ such that $L_l=L_{l-1}(z_l)$ and $z_l^p-z_l = \lambda_l$. 
By Artin-Schreier theory, we may, and will, assume that the $z_l$ and 
$\lambda_l$ have been chosen to satisfy:
\begin{itemize}
\item[(a)] $\mathrm{ord}_x(\lambda_l)/p^{n(x)-l+1}$ is a negative integer that is relatively prime to $p$, and
\item[(b)] $\tau_x^{p^{l-1}}(z_l)=z_l+1$, meaning $(\tau_x-1)^{p^{l-1}} (z_l) = 1$.
\end{itemize}
This provides the following basis for $K$ over $L_x$. For $0\le t \le p^{n(x)}-1$, write
$$t = a_{1,t} + a_{2,t} \,p + \cdots + a_{n(x),t}\, p^{n(x)-1}$$
with $0\le a_{1,t},\ldots,a_{n(x),t}\le p-1$, and define
$$w_t = z_1^{a_{1,t}} z_2^{a_{2,t}} \cdots \,z_{n(x)}^{a_{n(x),t}}.$$
As in \cite[Lemma 1]{vm}, we obtain that for all $0\le t \le p^{n(x)}-1$,
$$(\tau_x-1)^t w_t = (a_{1,t})!\,(a_{2,t})! \cdots (a_{n(x),t})!.$$
In particular, this implies
$$(\tau_x-1)^i w_t = 0 \qquad\mbox{for $t+1\le i\le p^{n(x)}-1$}.$$
For $0\le t\le p^{n(x)}-1$, define $K^{(t)}$ to be the kernel of the action of 
$\mathcal{J}_x^t= k[I_x](\tau_x-1)^t$.
We obtain that 
$$\{w_0,w_1,\ldots,w_{t-1}\}$$
is an $L_x$-basis for $K^{(t)}$. Hence, we obtain an isomorphism
$$(\tau_x-1)^t:\quad \frac{K^{(t+1)}}{K^{(t)}} \to L_x$$
which sends the residue class of $w_t$ to the non-zero scalar 
$(a_{1,t})!\,(a_{2,t})! \cdots (a_{n(x),t})!$ in $L_x$.
Since the stalk of $(f_x)_*\mathcal{D}^{-1}_{X/Y_x}$ at $y_x$ is naturally identified with  the stalk of $\mathcal{D}^{-1}_{X/Y_x}$ at $x$,
we obtain
\begin{equation}
\label{eq:divisor1}
-d'_{y_x,x,t} = \mathrm{min} \left\{ \mathrm{ord}_{y_x}(c_t)\;;\;
c_0w_0+\cdots+c_tw_t\in (\mathcal{D}^{-1}_{X/Y_x})_x\mbox{ for some $c_0,\ldots,c_t\in L_x$}\right\}
\end{equation}
for $0\le t\le p^{n(x)}-1$. 
Note that $c_0w_0+\cdots+c_tw_t\in (\mathcal{D}^{-1}_{X/Y_x})_x$ if and only if
\begin{equation}
\label{eq:divisor2}
\mathrm{ord}_x(c_0w_0+\cdots+c_tw_t) 
\;\ge\; \mathrm{ord}_{x}( \mathcal{D}^{-1}_{X/Y_x})
\end{equation}
where
\begin{equation}
\label{eq:divisor3}
\mathrm{ord}_{x}( \mathcal{D}^{-1}_{X/Y_x}) = - \sum_{i\ge 0} \left(\# I_{x,i} - 1\right)
\end{equation}
and, as before, $I_{x,i}$ denotes the $i^{\mathrm{th}}$ lower ramification subgroup of $I_x$. Since $I_x$ is cyclic of
order $p^{n(x)}$, there are exactly $n(x)$ jumps $b_0,b_1,\ldots,b_{n(x)-1}$
in the numbering of the lower ramification groups $I_{x,i}$. 
The jumps $b_l$ are all congruent modulo $p$ and relatively prime to $p$.
Moreover, if $0\le i \le b_0$, then
$I_{x,i}=I_x$, and if $1\le l \le n(x)-1$ and $b_{l-1} < i\le b_l$, 
then $\#I_{x,i}=p^{n(x)-l}$. Hence 
\begin{equation}
\label{eq:divisor3.5}
\sum_{i\ge 0} \left(\# I_{x,i} - 1\right) = \sum_{l=1}^{n(x)} (p-1) \, p^{n(x)-l}\,(b_{l-1}+1).
\end{equation}
Because $\mathrm{ord}_x(z_l)=-p^{n(x)-l}\,b_{l-1}$ for $1\le l\le n(x)$, we obtain
for all $0\le s\le t$,
\begin{eqnarray}
\label{eq:divisor3.6}
\mathrm{ord}_{x} (c_s w_s)&=& \mathrm{ord}_{x}(c_s) +  \mathrm{ord}_{x}(w_s)\\
\nonumber
&=& p^{n(x)}\,\mathrm{ord}_{y_x}(c_s) +  \mathrm{ord}_{x}\left(z_1^{a_{1,s}} z_2^{a_{2,s}} \cdots \,z_{n(x)}^{a_{n(x),s}}\right)\\
\nonumber
&=&p^{n(x)}\,\mathrm{ord}_{y_x}(c_s) +  \sum_{l = 1}^{n(x)} a_{l,s} \,\mathrm{ord}_{x}(z_l)\\
\nonumber
&=&p^{n(x)}\,\mathrm{ord}_{y_x}(c_s) -  \sum_{l = 1}^{n(x)} a_{l,s} \,p^{n(x)-l}\,b_{l-1}.
\end{eqnarray}
Since for all $1\le l \le n(x)$, we have $a_{l,s}\in\{0,1,\ldots,p-1\}$ and 
$b_{l-1}$ is not divisible by $p$, it follows that the residue classes
$\mathrm{ord}_{x} (c_s w_s)\mod p^{n(x)}$ are all different
for $s\in\{0,1,\ldots,t\}$. But this implies
$$\mathrm{ord}_x(c_0w_0+\cdots+c_tw_t) 
= \mathrm{min}_{0\le s\le t} \,\mathrm{ord}_{x} (c_s w_s).$$
Using  (\ref{eq:divisor2}) and (\ref{eq:divisor3}), we obtain that 
$c_0w_0+\cdots+c_tw_t\in (\mathcal{D}^{-1}_{X/Y_x})_x$ if and only if
$$\mathrm{ord}_{x} (c_s w_s) \ge - \sum_{i\ge 0} \left(\# I_{x,i} - 1\right)$$
for all $0\le s\le t$. 
In particular, this is true for $s=t$. Therefore, letting $s=t$ in (\ref{eq:divisor3.6}),
we obtain
\begin{equation}
\label{eq:divisor4}
\mathrm{ord}_{y_x}(c_t)\ge \frac{ - \sum_{i\ge 0} \left(\# I_{x,i} - 1\right) + \sum_{l = 1}^{n(x)} a_{l,t} \,p^{n(x)-l}\,b_{l-1}}{p^{n(x)}}
\end{equation}
whenever $c_0w_0+\cdots+c_tw_t\in (\mathcal{D}^{-1}_{X/Y_x})_x$.
But this means that the ramification data associated to the action of $I_x$ on $X$ uniquely determines $d'_{y_x,x,t}$, 
for $0\le t\le p^{n(x)}-1$. More precisely, it follows from
(\ref{eq:reduce1}), (\ref{eq:divisor1}) and (\ref{eq:divisor4}) that
\begin{equation}
\label{eq:extra!}
d_{y,j}=d'_{y_x,x,t}=\left\lfloor \frac{ \sum_{i\ge 0} \left(\# I_{x,i} - 1\right) -
\sum_{l = 1}^{n(x)} a_{l,t} \,p^{n(x)-l}\,b_{l-1}}{p^{n(x)}}
\right\rfloor
\end{equation}
for all $t,j\ge 0$ satisfying $p^{i(x)}t\le j< p^{i(x)}(t+1)$ when $i(x)=n_I-n(x)$
and $\lfloor r\rfloor$ denotes the largest integer that is less than or equal to a given rational number 
$r$.
Moreover, the formula in (\ref{eq:extra!}), together with (\ref{eq:divisor3}) and 
(\ref{eq:divisor3.5}), shows that $d'_{y_x,x,t}\ge 1$ for $0\le t< p^{n(x)}-1$, and
$d'_{y_x,x,t} = 0$ for $t=p^{n(x)}-1$. Hence
\begin{eqnarray*}
d_{y,j}\ge 1 &\mbox{ for }& 0\le j< p^{i(x)}(p^{n(x)}-1), \mbox{ and } \\
d_{y,j}=0 &\mbox{ for }& p^{i(x)}(p^{n(x)}-1)\le j< p^{i(x)}p^{n(x)}=\#I .
\end{eqnarray*}
Since $I$ is cyclic, there is at least one point $x_0$ in $X$ with $I_{x_0}=I$. In particular,
$n(x_0)=n_I$ and $i(x_0)=0$. Therefore, it follows that if $x_0$ lies above the point 
$y_0\in Y$ then $d_{y_0,j}\ge 1$ for all $0\le j < \#I -1$, which means that $D_j$ is effective 
of positive degree for $0\le j < \# I -1$.
On the other hand, the above calculations show that 
$d_{y,\#I -1}=0$ for all $y\in Y$, implying $D_{\# I -1} = 0$.
\end{proof}

\begin{lemma}
\label{lem:dimension}
For $0 \le j \le \# I - 1$, there is an isomorphism 
$$\HH^0(X,\Omega_X)^{(j+1)}/\HH^0(X,\Omega_X)^{(j)}\cong \HH^0(Y,\Omega_X^{(j+1)}/\Omega_X^{(j)})
\cong  S_{\chi^{-j}} \otimes_k \HH^0(Y, \Omega_Y(D_j))$$
of $k[H/I]$-modules, where $D_j$ is the divisor from Proposition $\ref{prop:filter}$.
\end{lemma}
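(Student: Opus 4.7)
The second isomorphism comes directly from Proposition \ref{prop:filter}(iii): writing $\pi\colon X\to Y$ for the (finite) quotient map, one has $\HH^0(X,\mathcal{L}_j)=\HH^0(Y,\pi_*\mathcal{L}_j)$, and the one-dimensional $k$-space $S_{\chi^{-j}}$ (carrying a trivial $I$-action) can be pulled outside $\HH^0(Y,-)$, giving $\HH^0(X,\mathcal{L}_j)\cong S_{\chi^{-j}}\otimes_k\HH^0(Y,\Omega_Y(D_j))$ as $k[H/I]$-modules.

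For the first isomorphism, the plan is to apply $\HH^0(X,-)$ to the short exact sequence $0\to\Omega_X^{(j)}\to\Omega_X^{(j+1)}\to\mathcal{L}_j\to 0$ from the proof of Proposition \ref{prop:filter}. Since $\Omega_X^{(j)}$ is by construction the sheaf-theoretic kernel of $(\tau-1)^j$ acting on $\Omega_X$, left-exactness of $\HH^0$ identifies $\HH^0(X,\Omega_X^{(j)})$ with $\HH^0(X,\Omega_X)^{(j)}$. The long exact sequence then produces an $H$-equivariant injection
$$\alpha_j\colon \HH^0(X,\Omega_X)^{(j+1)}/\HH^0(X,\Omega_X)^{(j)}\injects \HH^0(X,\mathcal{L}_j),$$
and I would upgrade each $\alpha_j$ to an isomorphism by comparing total $k$-dimensions as $j$ ranges over $0,1,\dots,\#I-1$.

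The left-hand side telescopes to $\dim_k\HH^0(X,\Omega_X)=g(X)$, using that $(\tau-1)^{\#I}=0$ in characteristic $p$ so $\HH^0(X,\Omega_X)^{(\#I)}=\HH^0(X,\Omega_X)$, while $\HH^0(X,\Omega_X)^{(0)}=0$. For the right-hand side I would invoke Proposition \ref{prop:filter}(ii): when $j<\#I-1$ the divisor $D_j$ is effective of positive degree, so $\deg\Omega_Y(D_j)>2g(Y)-2$ and Serre duality yields $\HH^1(Y,\Omega_Y(D_j))\cong\HH^0(Y,\mathcal{O}_Y(-D_j))^{\vee}=0$; when $j=\#I-1$ one has $D_j=0$ and $\HH^1(Y,\Omega_Y)$ is one-dimensional. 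Since $\pi$ is finite, $\pi_*$ is exact and preserves Euler characteristics, so the filtration $\{\pi_*\Omega_X^{(j)}\}$ of $\pi_*\Omega_X$ gives
$$\sum_{j=0}^{\#I-1}\chi(Y,\mathcal{L}_j)=\chi(Y,\pi_*\Omega_X)=\chi(X,\Omega_X)=g(X)-1,$$
so the total of $\dim_k\HH^0(Y,\mathcal{L}_j)$ equals $(g(X)-1)+1=g(X)$. Matching totals forces each $\alpha_j$ to be a bijection, and all maps in sight are $H$-equivariant with $I$ acting trivially on both sides, yielding the asserted $k[H/I]$-module isomorphism.

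The main obstacle is arranging the dimension count so that the Euler characteristic on $X$ translates into a sum of $\HH^0$-dimensions on $Y$: this requires both the effectivity and positive degree of $D_j$ from Proposition \ref{prop:filter}(ii) (to kill $\HH^1$ in the intermediate steps via Serre duality) and the exactness of $\pi_*$ (to transfer Euler characteristics cleanly between $X$ and $Y$). Once these two inputs are secured, the argument reduces to elementary Riemann--Roch on $Y$ and the telescoping identity on $X$.
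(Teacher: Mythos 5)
Your proposal is correct and follows essentially the same route as the paper: both obtain the injection from the long exact cohomology sequence of $0\to\Omega_X^{(j)}\to\Omega_X^{(j+1)}\to\mathcal{L}_j\to 0$ and then force surjectivity by a global dimension count that uses the positivity of $D_j$ for $j<\#I-1$ to kill $\HH^1(Y,\Omega_Y(D_j))$. The only cosmetic difference is that you package the count as additivity of Euler characteristics along the filtration, whereas the paper computes $g(X)$ via Riemann--Roch for $\Omega_X$ viewed as a rank-$\#I$ bundle on $Y$ and sums $\deg(D_j)$ explicitly; these are the same computation.
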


\begin{proof}
By Proposition \ref{prop:filter}, we know that there is a $k[H]$-module isomorphism
$$\HH^0(Y,\Omega_X^{(j+1)}/\Omega_X^{(j)})\cong \HH^0(Y, S_{\chi^{-j}} \otimes_k \Omega_Y(D_j))
\cong S_{\chi^{-j}} \otimes_k \HH^0(Y, \Omega_Y(D_j)).$$
Since $I$ acts trivially on all modules involved, these are also $k[H/I]$-module isomorphisms. The sequence 
$$0\to \Omega_X^{(j)}\to \pi_*\Omega_X\xrightarrow{(\tau-1)^j} \pi_*\Omega_X$$ 
of $\mathcal{O}_Y$-$H$-modules is exact. Since
$\HH^0(Y,\pi_*\Omega_X)\cong \HH^0(X,\Omega_X)$ as $k[H]$-modules
and $\HH^0(Y,-)$ is left exact, the sequence
$$0\to \HH^0(Y,\Omega_X^{(j)}) \to \HH^0(X,\Omega_X) \xrightarrow{(\tau-1)^j} \HH^0(X,\Omega_X)$$
is an exact sequence of $k[H]$-modules. In particular, this shows that we have a commutative diagram
\begin{small}
$$\xymatrix{
0\ar[r]& \HH^0(X,\Omega_X)^{(j)}\ar[d]_{\beta_j}\ar[r]& \HH^0(X,\Omega_X)^{(j+1)} \ar[d]_{\beta_{j+1}}\ar[r]& 
\HH^0(X,\Omega_X)^{(j+1)}/\HH^0(X,\Omega_X)^{(j)}\ar[d]_{\gamma_j}\ar[r]&0\\
0\ar[r]& \HH^0(Y,\Omega_X^{(j)}) \ar[r]&  \HH^0(Y,\Omega_X^{(j+1)})\ar[r]& \HH^0(Y,\mathcal{L}_j)\ar[r]&\HH^1(Y,\Omega_X^{(j)})\cdots
}$$
\end{small}
where $\beta_j$ and $\beta_{j+1}$ are isomorphisms and $\gamma_j$ is injective. To show that $\gamma_j$ is
also an isomorphism of $k[H]$-modules, it suffices to show that the $k$-dimensions of 
$\HH^0(X,\Omega_X)^{(j+1)}/\HH^0(X,\Omega_X)^{(j)}$ and $\HH^0(Y,\mathcal{L}_j)$ coincide.
To do so, we first use the Riemann-Roch theorem to describe $\mathrm{dim}_k\;\HH^0(Y,\mathcal{L}_j)$. By Proposition \ref{prop:filter},
$D_{\#I - 1}=0$, and hence $\mathcal{L}_{\#I-1}=\Omega_Y$ as $\mathcal{O}_Y$-modules, meaning that
\begin{equation}
\label{eq:dim1}
\mathrm{dim}_k\,\HH^0(Y,\mathcal{L}_{\#I-1})=\mathrm{dim}_k\,\HH^0(Y,\Omega_Y)=g(Y).
\end{equation}
On the other hand, for $0\le j< \#I -1$, by Proposition \ref{prop:filter}, $D_j$ is an effective divisor of positive degree,
which implies that
$$\mathrm{deg}(\mathcal{L}_j) =\mathrm{deg}(\Omega_Y(D_j)) =\mathrm{deg}(D_j) + \mathrm{deg}(\Omega_Y)
> \mathrm{deg}(\Omega_Y)=2\,g(Y)-2.$$
Hence $\HH^1(Y,\mathcal{L}_j)=0$, and we obtain by the Riemann-Roch theorem:
\begin{eqnarray}
\label{eq:dim2}
\mathrm{dim}_k\,\HH^0(Y,\mathcal{L}_{j})&=&\mathrm{deg}(\mathcal{L}_j)+1-g(Y)\\
\nonumber
&=& \mathrm{deg}(D_j) + g(Y)-1\qquad\mbox{for $0\le j<\#I-1$.}
\end{eqnarray}
Using the Riemann-Roch theorem for $\pi_*\Omega_X=\pi_*\mathcal{D}^{-1}_{X/Y}\otimes_{\mathcal{O}_Y}\Omega_Y$  (see (\ref{eq:ohweh})), we obtain
\begin{eqnarray*}
g(X)-1&=&\mathrm{dim}_k\,\HH^0(X,\Omega_X) - \mathrm{dim}_k\,\HH^1(X,\Omega_X)\\
&=&\mathrm{deg}_{\mathcal{O}_Y}(\pi_*\Omega_X) + \mathrm{rank}_{\mathcal{O}_Y}(\pi_*\Omega_X)(1-g(Y))\\
&=&\sum_{j=0}^{\#I-1} \left(\mathrm{deg}(D_j)+(2\,g(Y)-2)\right) + (\#I)(1-g(Y))\\
&=&(\#I)(g(Y)-1) + \sum_{j=0}^{\#I-1} \mathrm{deg}(D_j).
\end{eqnarray*}
In other words, we get
\begin{equation}
\label{eq:dim3}
g(X) = 1+ (\#I)(g(Y)-1) + \sum_{j=0}^{\#I-1} \mathrm{deg}(D_j).
\end{equation}
On the other hand, using (\ref{eq:dim1}) and (\ref{eq:dim2}), we have 
\begin{eqnarray*}
g(X)&=&\mathrm{dim}_k\,\HH^0(X,\Omega_X)\\
&=&\sum_{j=0}^{\#I-1}\mathrm{dim}_k\,\left(\HH^0(X,\Omega_X)^{(j+1)}/\HH^0(X,\Omega_X)^{(j)}\right)\\
&\le&\sum_{j=0}^{\#I-1}\mathrm{dim}_k\,\HH^0(Y,\mathcal{L}_j)\\
&=&\sum_{j=0}^{\#I-2}\left(\mathrm{deg}(D_j) + g(Y)-1\right) + g(Y)\\
&=&\sum_{j=0}^{\#I-2}\mathrm{deg}(D_j) + (\#I)g(Y)-(\#I-1).
\end{eqnarray*}
Since $D_{\#I-1}=0$, we obtain by (\ref{eq:dim2}) that the inequality in the third row must be an equality. But this means that
for all $0\le j<\#I-1$, we have
$$\mathrm{dim}_k\,\left(\HH^0(X,\Omega_X)^{(j+1)}/\HH^0(X,\Omega_X)^{(j)}\right) = 
\mathrm{dim}_k\,\HH^0(Y,\mathcal{L}_j)$$
finishing the proof of Lemma \ref{lem:dimension}.
\end{proof}

\begin{proposition}
\label{prop:fundamental}
For $0\le j\le \#I-1$, let $D_j$ be the divisor from Proposition $\ref{prop:filter}$, which is determined 
by the ramification data associated to the action of $I$ on $X$. 
The $k[H/I]$-module structure of $\HH^0(Y, \Omega_Y(D_j))$ is uniquely
determined by  the  inertia groups of the cover $X\to X/H$ and their fundamental characters.
\end{proposition}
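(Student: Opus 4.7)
The plan is to reduce to a tamely ramified situation to which the results of Nakajima \cite[Thm.~2]{Nakajima:1986} and Kani \cite[Thm.~3]{Kani:86} apply. The key observation is that the induced $H/I$-cover $Y\to X/H$ is tamely ramified. To verify this, fix a closed point $y\in Y$ with preimage $x\in X$. The inertia subgroup of $H/I$ at $y$ equals $H_{x,0}/I_{x,0}$ with $I_{x,0}=H_{x,0}\cap I$. Since $C$ has order prime to $p$, the $p$-Sylow of $H_{x,0}$ is precisely $H_{x,0}\cap P$, and by the very definition of $I$ (the subgroup of $P$ generated by the inertia subgroups of the $P$-cover $X\to X/P$) this Sylow lies in $I$, and hence in $I_{x,0}$. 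In fact $I_{x,0}=H_{x,0}\cap P$, so $H_{x,0}/I_{x,0}$ is the maximal prime-to-$p$ quotient of $H_{x,0}$.

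Next I would observe that $\Omega_Y(D_j)$ is an $H/I$-equivariant line bundle on $Y$: equivariance of $\Omega_Y$ comes from the $H/I$-action on $Y$, and equivariance of $\mathcal{O}_Y(D_j)$ from the $H$-invariance of $D_j$ established in Proposition \ref{prop:filter}. Applying Nakajima's sequence (\ref{eq:Nakajimasequence}) to $\mathcal{E}=\Omega_Y(D_j)$ on the tame cover $Y\to X/H$ yields
\begin{equation*}
0\to \HH^0(Y,\Omega_Y(D_j))\to L^0\to L^1\to \HH^1(Y,\Omega_Y(D_j))\to 0
\end{equation*}
with $L^0$ and $L^1$ projective $k[H/I]$-modules. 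For $0\le j<\#I-1$, Proposition \ref{prop:filter}(ii) gives that $D_j$ is effective of positive degree, so $\deg\Omega_Y(D_j)>2g(Y)-2$ and $\HH^1=0$; for $j=\#I-1$ we have $D_j=0$ and we recover the classical canonical case already handled by Nakajima's original theorem. In either case the theorems of Nakajima and Kani in the tame setting determine the $k[H/I]$-module structure of $\HH^0(Y,\Omega_Y(D_j))$ from (a) the inertia groups of $H/I$ at closed points of $Y$, (b) their fundamental characters, and (c) the local multiplicities of $D_j$.

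It remains to show that (a)--(c) are encoded in the inertia groups and fundamental characters of $X\to X/H$ together with the given $D_j$. Item (a) is immediate, since $I_{x,0}$ is the $p$-Sylow of $H_{x,0}$ and so is determined by the inertia group alone. For (b), I would choose an $I_{x,0}$-invariant uniformizer $\pi$ at $y$, e.g.\ the norm $\pi=\prod_{\tau\in I_{x,0}}\tau(u)$ of a uniformizer $u$ at $x$, and use $\sigma(u)\equiv\theta_x(\sigma)\,u\pmod{u^2}$ together with the normality of $I$ in $H$ to compute $\theta_y(\bar\sigma)=\theta_x(\sigma)^{\#I_{x,0}}$ for any lift $\sigma\in H_{x,0}$ of $\bar\sigma\in H_{x,0}/I_{x,0}$. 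Item (c) is part of the input of the proposition.

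I expect the main obstacle to lie in extracting the precise $k[H/I]$-module structure from Nakajima's exact sequence and Kani's character formula, rather than merely its class in the Grothendieck group modulo projectives. The cyclic Sylow hypothesis is decisive here: since the Sylow $p$-subgroup of $H/I$ remains cyclic, Remark \ref{rem:indecomposables} tells us that every indecomposable $k[H/I]$-module is uniserial and hence determined by its socle and its length. These in turn are recovered from the Brauer character of $\HH^0(Y,\Omega_Y(D_j))$, computable via Chevalley--Weil from (a) and (b), combined with the projective summands provided by Nakajima's sequence.
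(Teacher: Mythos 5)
Your proposal follows essentially the same route as the paper: reduce to the tame cover $Y \to X/H$, apply Nakajima's sequence (\ref{eq:Nakajimasequence}) to the $H/I$-equivariant line bundle $\Omega_Y(D_j)$, use that $D_j$ is effective of positive degree to kill $\HH^1(Y,\Omega_Y(D_j))$ for $0\le j<\#I-1$, treat $j=\#I-1$ as the canonical case, and transport fundamental characters via $\theta_{y}=\theta_{x}^{\#I_{x}}$ (your norm-of-a-uniformizer computation is a correct proof of the relation the paper gets from \'etaleness of $X/I\to X/P$). The only structural difference is that the paper first applies Serre duality and works with $\HH^1(Y,\mathcal{O}_Y(-D_j))$, which lets it identify the non-projective summand in the case $j=\#I-1$, $p\mid \#(H/I)$ explicitly as $S_{\chi^{-1}}$; your direct route is equivalent. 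One step in your last paragraph should be made explicit, because as written it is not a valid inference: the Brauer character does \emph{not} determine a $k[H/I]$-module in general, even though every indecomposable is uniserial and determined by socle and length (over $k[\mathbb{Z}/p]$ the modules $U_1\oplus U_1$ and $U_2$ have equal Brauer characters). What saves the argument for $0\le j<\#I-1$ is that the vanishing of $\HH^1(Y,\Omega_Y(D_j))$ turns Nakajima's sequence into a short exact sequence $0\to \HH^0(Y,\Omega_Y(D_j))\to L^0\to L^1\to 0$ with $L^1$ projective; this splits, so $\HH^0(Y,\Omega_Y(D_j))$ is a direct summand of a projective module, hence projective, hence determined by its Brauer character. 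You gesture at this (``the projective summands provided by Nakajima's sequence''), but it is the decisive point of the whole proof and needs to be stated, together with the separate identification of the fixed non-projective constituent when $j=\#I-1$.
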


\begin{proof}
As before, let $K$ be the function field of $X$, and let $L=K^I$ be the function field of $Y=X/I$.
Moreover, let $Z=X/H$. Then $Y\to Z$ is tamely ramified with Galois group
$H/I$. 

Let $0\le j \le\#I-1$. By (\ref{eq:Nakajimasequence}), there exist finitely generated projective $k[H/I]$-modules
$P_{1,j}$ and $P_{0,j}$ together with an  exact sequence of $k[H/I]$-modules
\begin{equation}
\label{eq:fund1}
0\to \HH^0(Y,\Omega_Y(D_j)) \to P_{1,j} \to P_{0,j}\to \HH^1(Y,\Omega_Y(D_j)) \to 0.
\end{equation}
By Serre duality, we obtain
\begin{eqnarray}
\label{eq:Serre}
\HH^0(Y,\Omega_Y(D_j)) &=& \mathrm{Hom}_k(\HH^1(Y,\mathcal{O}_Y(-D_j)),k),\\
\nonumber
\HH^1(Y,\Omega_Y(D_j)) &=& \mathrm{Hom}_k(\HH^0(Y,\mathcal{O}_Y(-D_j)),k).
\end{eqnarray}
In other words, the $k[H/I]$-module structure of $\HH^0(Y,\Omega_Y(D_j))$ is uniquely determined by the
$k[H/I]$-module structure of $\HH^1(Y,\mathcal{O}_Y(-D_j))$. So it is enough to show that the latter is uniquely
determined by the inertia groups of the cover $X\to X/H=Z$ and their fundamental characters.

For $0\le j< \#I -1$, $D_j$ is an effective divisor of positive degree by Proposition \ref{prop:filter}.
This implies that $\mathrm{deg}(\Omega_Y(D_j))> \mathrm{deg}(\Omega_Y)=2\,g(Y)-2$, and
hence  $\HH^1(Y,\Omega_Y(D_j))=0$, for $0\le j< \#I -1$. Since $D_{\#I-1}=0$, we obtain, using
(\ref{eq:Serre}), 
\begin{equation}
\label{eq:fund2}
\HH^0(Y,\mathcal{O}_Y(-D_j)) = \left\{\begin{array}{ccl}0&:&0\le j<\#I-1,\\k&:&j=\#I-1,\end{array}\right.
\end{equation}
where $k$ has trivial action by $H/I$, meaning $k=S_0$ in the notation of Remark
\ref{rem:indecomposables}.

Applying $\mathrm{Hom}_k(-,k)$ to (\ref{eq:fund1}) and using (\ref{eq:Serre}), 
we obtain an exact sequence of $k[H/I]$-modules
\begin{equation}
\label{eq:fund3}
0\to \HH^0(Y,\mathcal{O}_Y(-D_j)) \to Q_{0,j} \to Q_{1,j} \to \HH^1(Y,\mathcal{O}_Y(-D_j)) \to 0
\end{equation}
for $0\le j \le \#I-1$, where $Q_{i,j}=\mathrm{Hom}_k(P_{i,j},k)$ is a finitely generated projective and injective 
$k[H/I]$-module for $i=0,1$. By (\ref{eq:fund2}) and using Remark \ref{rem:indecomposables}, this implies the following:
\begin{itemize}
\item[(a)] For $0\le j< \#I-1$, $\HH^1(Y,\mathcal{O}_Y(-D_j))$ is a projective $k[H/I]$-module.
\item[(b)] If $j=\#I-1$ and $I=P$, then $\HH^1(Y,\mathcal{O}_Y(-D_j))$ is a projective $k[H/I]$-module.
If $j=\#I-1$ and $p$ divides $\#(H/I)$, then $\HH^1(Y,\mathcal{O}_Y(-D_j))\cong S_{\chi^{-1}} \oplus Q_j$, where $Q_j$ is a projective $k[H/I]$-module.
\end{itemize}
This implies that in all cases, the $k[H/I]$-module structure of $\HH^1(Y,\mathcal{O}_Y(-D_j))$ is uniquely 
determined by its Brauer character. In other words, the character values of $\HH^1(Y,\mathcal{O}_Y(-D_j))$ 
on all elements of $H/I$ of $p'$-order uniquely determine $\HH^1(Y,\mathcal{O}_Y(-D_j))$ as a $k[H/I]$-module.
We now show that these character values are uniquely determined by the ($p'$-parts of the) inertia groups 
of the cover $X\to X/H$ and their fundamental characters.

Let $\overline{H}=H/I$, so that $Y=X/I \to Z = X/H$ is tamely ramified with Galois group $\overline{H}$. 
Let $Z_{\mathrm{ram}}$ be the set of points in $Z$ that ramify in $Y$. For each $z\in Z_{\mathrm{ram}}$, 
let $y(z)\in Y$ and $x(z)\in X$ be points above $z$ so that $x(z)$ lies above 
$y(z)$. Let $\overline{H}_{y(z)}\le \overline{H}$ be the inertia group of $y(z)$ 
inside $\overline{H}$, 
and let $H_{x(z)}\le H$ be the inertia group of $x(z)$ inside $H$. Since $Y\to Z$ is tamely ramified, it follows that
$\overline{H}_{y(z)}$ is a cyclic $p'$-group. Moreover, if $I_{x(z)}\le I$ is the inertia group of $x(z)$ inside $I$,
then $H_{x(z)}/I_{x(z)}\cong \overline{H}_{y(z)}$. The fundamental character of the inertia group $H_{x(z)}$
is the character $\theta_{x(z)}:H_{x(z)} \to k^* = \mathrm{Aut}(\mathfrak{m}_{X,x(z)}/\mathfrak{m}_{X,x(z)}^2)$ giving the
action of $H_{x(z)}$ on the cotangent space of $x(z)$.  More precisely, if $h\in H_{x(z)}$ then
$$\theta_{x(z)}(h) = \frac{h(\pi)}{\pi}\mod (\pi)$$
where $\pi=\pi_{x(z)}$ denotes the local uniformizer at $x(z)$.
Note that $\theta_{x(z)}$ factors through the maximal
$p'$-quotient of $H_{x(z)}$, which is isomorphic to $\overline{H}_{y(z)}$.
Similarly, we can define the fundamental character $\theta_{y(z)}:\overline{H}_{y(z)} \to k^*$. 
Since $X/I\to X/P$ is \'{e}tale, we can identify 
\begin{equation}
\label{eq:fund4}
\theta_{y(z)}=\left(\theta_{x(z)}\right)^{\# I_{x(z)}}
\end{equation}
on the maximal $p'$-quotient of $H_{x(z)}$ which we identify with $\overline{H}_{y(z)}$. 
Abusing notation, we will use $\theta_{y(z)}$ to also refer to the corresponding one-dimensional 
$k[\overline{H}_{y(z)}]$-module and to its Brauer character.

For $z\in Z_{\mathrm{ram}}$, we have that
$${\mathcal{O}_Y(-D_j)}_{y(z)}\otimes_{\mathcal{O}_{Y,y(z)}}k = \left(\theta_{y(z)}\right)^{\mathrm{ord}_{y(z)}(D_j)}.$$
Following \cite[\S 3]{Nakajima:1986}, we define $\ell_{y(z),j}\in\{0,1,\ldots, \#\overline{H}_{y(z)}-1\}$ by
\begin{equation}
\label{eq:fund5}
\ell_{y(z),j}\equiv -\mathrm{ord}_{y(z)}(D_j) \mod (\#\overline{H}_{y(z)}).
\end{equation}
For a $k[\overline{H}]$-module $M$, let $\beta(M)$ denote the Brauer character of $M$, and
let $\beta_0$ be the Brauer character of the trivial simple $k[\overline{H}]$-module.
By (\ref{eq:fund2}) and (\ref{eq:fund3}), we have
\begin{equation}
\label{eq:fund6}
\beta\left(\HH^1(Y,\mathcal{O}_Y(-D_j))\right)=\delta_{j,\#I-1}\; \beta_0+
\beta\left(Q_{1,j}\right)  -  \beta\left(Q_{0,j}\right)
\end{equation}
where $\delta_{j,\#I-1}$ is the usual Kronecker delta. 
By \cite[Thm. 2 and Eq. (*) on p. 120]{Nakajima:1986}, we have
\begin{eqnarray}
\label{eq:fund7}
\beta\left(Q_{1,j}\right)  -  \beta\left(Q_{0,j}\right) &=& 
\sum_{z\in Z_{\mathrm{ram}}}\sum_{t=0}^{\#\overline{H}_{y(z)}-1} \frac{t}{\#\overline{H}_{y(z)}}\,
\mathrm{Ind}_{\overline{H}_{y(z)}}^{\overline{H}} \left(\left(\theta_{y(z)}\right)^t\right)\\
\nonumber
&& - \sum_{z\in Z_{\mathrm{ram}}}\sum_{t=1}^{\ell_{y(z),j}} 
\mathrm{Ind}_{\overline{H}_{y(z)}}^{\overline{H}} \left(\left(\theta_{y(z)}\right)^{-t}\right)\\
\nonumber
&& +\;n_j\,\beta(k[\overline{H}])
\end{eqnarray}
for some integer $n_j$.
Since the value of $\beta(k[\overline{H}])$ at any non-trivial element of $\overline{H}$ of $p'$-order is zero,
$n_j$ is determined by the values of all the involved Brauer characters at the identity element 
$e_{\overline{H}}$ of $\overline{H}$. 
These values are as follows:
\begin{itemize}
\item  the value of $\beta(k[\overline{H}])$ at $e_{\overline{H}}$  is $(\#\overline{H})$;
\item the value of  $\mathrm{Ind}_{\overline{H}_{y(z)}}^{\overline{H}} \left(\left(\theta_{y(z)}\right)^{\pm t}\right)$ at $e_{\overline{H}}$ is
$(\#\overline{H})/(\#\overline{H}_{y(z)})$,  for any integer $t\ge 0$;
\item by (\ref{eq:dim1}), (\ref{eq:dim2}) and (\ref{eq:fund1}) -- (\ref{eq:fund3}), the value of $\beta\left(Q_{1,j}\right)  -  \beta\left(Q_{0,j}\right)$ at $e_{\overline{H}}$ is 
$\mathrm{dim}_k\,\HH^0(Y,\Omega_Y(D_j))-\mathrm{dim}_k\,\HH^1(Y,\Omega_Y(D_j))
=\mathrm{deg}(D_j)+g(Y)-1$.
\end{itemize}
In particular, this implies
\begin{equation}
\label{eq:nj}
n_j=\frac{1}{\#\overline{H}}\left(\mathrm{deg}(D_j)+g(Y)-1\right) + \sum_{z\in Z_{\mathrm{ram}}} \frac{1}{\#\overline{H}_{y(z)}}\left(\ell_{y(z),j}-\frac{\#\overline{H}_{y(z)}-1}{2}\right).
\end{equation}
Therefore, it follows by (\ref{eq:fund4}) -- (\ref{eq:fund7}) that the Brauer character of the module
$\HH^1(Y,\mathcal{O}_Y(-D_j))$ is uniquely determined by the ($p'$-parts of the) inertia groups 
of the cover $X\to X/H$ and their fundamental characters. 
\end{proof}

\medskip

\noindent
\textit{Proof of Theorem $\ref{thm:main}$.}
By Lemma \ref{lem:conlonreduce}, we can assume $G=H$ is 
$p$-hypo-elementary. We write $H=P\rtimes_\chi C$ and use the notation introduced at
the beginning of \S \ref{s:proof}.
By Proposition \ref{prop:reducealgclosed}, we can assume $k$ is algebraically closed. In particular, the above results in \S \ref{s:proof} apply.
Let $M=\HH^0(X,\Omega_X)$. As before, let $I = \langle \tau \rangle$, and, for all integers $0\le j \le \#I-1$, let $M^{(j)}$ be the kernel
of the action of $\mathcal{J}^j = k[I](\tau-1)^j$. It follows from Proposition \ref{prop:filter}, Lemma \ref{lem:dimension} and Proposition \ref{prop:fundamental}
that  the $k[H/I]$-module structure of the subquotient modules
\begin{equation}
\label{eq:filterquotients}
\frac{M^{(j+1)}}{M^{(j)}},\qquad 0\le j\le \#I-1,
\end{equation}
is uniquely determined by the lower ramification groups and the fundamental characters
of closed points $x$ of $X$ that are ramified in the cover $X \to X/H$. It remains to show that the $k[H/I]$-module structures of the quotients in 
(\ref{eq:filterquotients}) uniquely determine the $k[H]$-module structure of $M$. This follows basically from the description of the indecomposable
$k[H]$-modules in Remark \ref{rem:indecomposables} 
(recall that we assume $k=\overline{k}$). 

To be a bit more precise, fix integeres $a,b$ with $0\le a\le c-1$ and $1\le b\le p^n$,
and let $n(a,b)$ be the number of indecomposable direct 
$k[H]$-module summands of $M$ that are isomorphic to $U_{a,b}$, using the notation from Remark \ref{rem:indecomposables}. 
Let $\#I=p^{n_I}$, and write $b=b'+b''\,p^{n-n_I}$ where $0\le b' < p^{n-n_I}$, $0\le b''\le p^{n_I}$. 
As before, for $i\in\mathbb{Z}$, define $\chi^i(a)\in\{0,1,\ldots,c-1\}$ to be such that $S_{\chi^i(a)}\cong S_a\otimes_k S_{\chi^i}$. 
We obtain:
\begin{itemize}
\item If $b'\ge 1$, then $n(a,b)$ equals the number of indecomposable direct $k[H/I]$-module summands of $M^{(b''+1)}/M^{(b'')}$ with socle
$S_{\chi^{-b''}(a)}$ and $k$-dimension $b'$.
\item If $b'=0$, then $b''\ge 1$. In this case, define $n_1(a,b)$ to be the number of indecomposable direct $k[H/I]$-module summands of 
$M^{(b'')}/M^{(b''-1)}$ with socle $S_{\chi^{-(b''-1)}(a)}$ and $k$-dimension $p^{n-n_I}$. Also, define $n_2(a,b)$ to be the number of indecomposable direct 
$k[H/I]$-module summands of $M^{(b''+1)}/M^{(b'')}$ with socle $S_{\chi^{-b''}(a)}$, where we set $n_2(a,b)=0$ if $b''=p^{n_I}$. Then $n(a,b)=n_1(a,b)-n_2(a,b)$.
\end{itemize}
This completes the proof of Theorem \ref{thm:main}.
\hspace*{\fill}$\Box$

\medskip

The following remark provides a summary of the key steps in the proof of Theorem \ref{thm:main} and can be used as an algorithm to
determine the decomposition of $\HH^0(X,\Omega_X)$ into a direct sum of indecomposable $k[H]$-modules.

\begin{remark}
\label{rem:algorithm}
We keep the notation introduced at the beginning of \S \ref{s:proof}. Let $M=\HH^0(X,\Omega_X)$,
and let $\#I=p^{n_I}$. 
\begin{itemize}
\item[(1)] For $0\le j\le \#I-1$,  let $D_j=\sum_{y\in Y}d_{y,j}\, y$ be the divisor from Proposition \ref{prop:filter}. 
For $y\in Y$, let $x\in X$ be a point above it, and let $I_x\le I$ be its inertia group inside $I$ of order $p^{n(x)}$.
Let $b_0,b_1,\ldots,b_{n(x)-1}$ be the jumps in the numbering of the lower ramification subgroups of $I_x$. 
For $0\le t\le p^{n(x)}-1$, write $t = a_{1,t} + a_{2,t}\, p + \cdots + a_{n(x),t}\, p^{n(x)-1}$ with $0\le a_{l,t}\le p-1$.
By the proof of Proposition \ref{prop:fundamental}, we have
$$d_{y,j}=\left\lfloor\frac{ \sum_{l=1}^{n(x)}\, p^{n(x)-l}\, \left(p-1+
(p-1- a_{l,t})\,b_{l-1}\right)}{p^{n(x)}}\right\rfloor$$
for all $j\ge 0$ satisfying $p^{i(x)}t\le j< p^{i(x)}(t+1)$ when $i(x)=n_I-n(x)$
and $\lfloor r\rfloor$ denotes the largest integer that is less than or equal to a given rational number 
$r$. By Lemma \ref{lem:dimension}, there is a $k[H/I]$-module isomorphism
$M^{(j+1)}/M^{(j)}\cong  S_{\chi^{-j}} \otimes_k \HH^0(Y, \Omega_Y(D_j))$ for all $0\le j\le \#I-1$.

\item[(2)] Let $Z=X/H$ and let $Z_{\mathrm{ram}}$ be the set of points in $Z$ that ramify in the cover $Y=X/I\to Z=X/H$. Let $\overline{H}=H/I$.
For each $z\in Z_{\mathrm{ram}}$, choose a point $y(z)\in Y$ above $z$ and a point $x(z)\in X$ above $y(z)$. Let $\overline{H}_{y(z)}$ be
the inertia group of $y(z)$ inside $\overline{H}$, and identify $\overline{H}_{y(z)}$ with the maximal $p'$-quotient of the inertia
group $H_{x(z)}$. Define $\theta_{x(z)}:H_{x(z)}\to k^*$ by
$$\theta_{x(z)}(h)=\frac{h(\pi_{x(z)})}{\pi_{x(z)}}\mod (\pi_{x(z)})$$
for $h\in H_{x(z)}$. Then $\theta_{x(z)}$ factors through $\overline{H}_{y(z)}$. Define 
$$\theta_{y(z)}=\left(\theta_{x(z)}\right)^{\#I_{x(z)}}.$$
By abuse of notation, we let $\theta_{y(z)}$  refer to the character $\overline{H}_{y(z)}\to k^*$ and also
to the corresponding Brauer character.
Moreover, define $\ell_{y(z),j}\in\{0,1,\ldots, \#\overline{H}_{y(z)}-1\}$ by
$$\ell_{y(z),j}\equiv -\mathrm{ord}_{y(z)}(D_j) \mod (\#\overline{H}_{y(z)}).$$
Let $0\le j\le \#I-1$. By Lemma \ref{lem:dimension} and the proof of Proposition \ref{prop:fundamental}, the Brauer character of the $k$-dual of 
$S_{\chi^j}\otimes_k (M^{(j+1)}/M^{(j)})$ is equal to
$$\delta_{j,\#I-1}\,\beta_0+\sum_{z\in Z_{\mathrm{ram}}}\sum_{t=0}^{\#\overline{H}_{y(z)}-1} \frac{t}{\#\overline{H}_{y(z)}}\,
\mathrm{Ind}_{\overline{H}_{y(z)}}^{\overline{H}} \left(\left(\theta_{y(z)}\right)^t\right)$$
$$ - \sum_{z\in Z_{\mathrm{ram}}}\sum_{t=1}^{\ell_{y(z),j}} \mathrm{Ind}_{\overline{H}_{y(z)}}^{\overline{H}} \left(\left(\theta_{y(z)}\right)^{-t}\right)
 +n_j\,\beta(k[\overline{H}])$$
where
$$n_j=\frac{1}{\#\overline{H}}\left(\mathrm{deg}(D_j)+g(Y)-1\right) + \sum_{z\in Z_{\mathrm{ram}}} \frac{1}{\#\overline{H}_{y(z)}}\left(\ell_{y(z),j}-\frac{\#\overline{H}_{y(z)}-1}{2}\right).$$
Hence this can be used to determine the Brauer character of $M^{(j+1)}/M^{(j)}$. Recall that $M^{(j+1)}/M^{(j)}$ is a projective $k[\overline{H}]$-module for
$0\le j< \#I-1$. If $I=P$ then $M^{(\#I)}/M^{(\#I-1)}$ is also a projective $k[\overline{H}]$-module. If $p$ divides $\#\overline{H}$
then $M^{(\#I)}/M^{(\#I-1)}$ is isomorphic to a direct sum of the simple 
$k[\overline{H}]$-module $S_\chi$ and a projective $k[\overline{H}]$-module.
Thus, this provides the decomposition of $M^{(j+1)}/M^{(j)}$ into a direct sum of indecomposable $k[\overline{H}]$-modules. 

\item[(3)] Use the notation from Remark \ref{rem:indecomposables}. 
Fix integeres $a,b$ with $0\le a\le c-1$ and $1\le b\le p^n$. 
Write $b=b'+b''\,p^{n-n_I}$ where $0\le b' < p^{n-n_I}$, $0\le b''\le p^{n_I}$. 
Then, by the proof of Theorem \ref{thm:main}, the number $n(a,b)$ of indecomposable direct $k[H]$-module summands of $M$ that are isomorphic to $U_{a,b}$ is given as follows:
\begin{itemize}
\item[(a)] If $b'\ge 1$, then $n(a,b)$ equals the number of indecomposable direct $k[\overline{H}]$-module summands of $M^{(b''+1)}/M^{(b'')}$ with socle
$S_{\chi^{-b''}(a)}$ and $k$-dimension $b'$.
\item[(b)] If $b'=0$, then $b''\ge 1$. In this case, define $n_1(a,b)$ to be the number of indecomposable direct $k[\overline{H}]$-module summands of 
$M^{(b'')}/M^{(b''-1)}$ with socle $S_{\chi^{-(b''-1)}(a)}$ and $k$-dimension $p^{n-n_I}$. Also, define $n_2(a,b)$ to be the number of indecomposable direct 
$k[\overline{H}]$-module summands of $M^{(b''+1)}/M^{(b'')}$ with socle $S_{\chi^{-b''}(a)}$, where we set $n_2(a,b)=0$ if $b''=p^{n_I}$. Then $n(a,b)=n_1(a,b)-n_2(a,b)$.
\end{itemize}
\end{itemize}
\end{remark}

\section{Holomorphic differentials of the modular curves $\mathcal{X}(\ell)$ modulo $p$}
\label{s:modular}

The geometric theory of modular forms and the associated arithmetic theory of moduli spaces of elliptic curves were studied by Deligne-Rapoport \cite{DeligneRapoport1973}, Katz \cite{Katz1973} and Katz-Mazur \cite{KaMa} (see also \cite{Igusa59}).

Let $N\ge 3$ be an integer, and let $\Gamma(N)$ be the principal congruence subgroup of $\mathrm{SL}(2,\mathbb{Z})$ of level $N$.
The moduli problem associated to $\Gamma(N)$ described in \cite[\S3.1]{KaMa} coincides with the ``naive'' level $N$ moduli problem discussed in \cite[Chap. 1]{Katz1973} when working over the ground ring $\mathbb{Z}[\frac{1}{N}]$ (see \cite[\S3.7 and \S4.6]{KaMa}).
By \cite[\S1.4]{Katz1973} (see also \cite[Cor. 4.7.2]{KaMa}), the naive level $N$ moduli problem is representable by a smooth affine curve $\mathcal{M}(N)$ over $\mathbb{Z}[\frac{1}{N}]$. Moreover, $\mathcal{M}(N)$ is finite and flat over the affine $j$-line $\mathrm{Spec}(\mathbb{Z}[\frac{1}{N},j])$, and \'etale over the open set of the affine $j$-line where $j$ and $j-1728$ are invertible (see also \cite[Thm. 8.6.8]{KaMa}). 
The normalization of the projective $j$-line $\mathbb{P}^1_{\mathbb{Z}[\frac{1}{N}]}$ in $\mathcal{M}(N)$ is a proper and smooth curve $\overline{\mathcal{M}}(N)$ over $\mathbb{Z}[\frac{1}{N}]$ and the ring of global sections of the structure sheaf of $\overline{\mathcal{M}}(N)$ is isomorphic to $\mathbb{Z}[\frac{1}{N},\zeta_N]$, where $\zeta_N$ is a primitive $N^{\mathrm{th}}$ root of unity. Since the inclusion map $\mathbb{Z}[\frac{1}{N}]\hookrightarrow \mathbb{Z}[\frac{1}{N},\zeta_N]$ is \'etale,  this makes $\overline{\mathcal{M}}(N)$ into a proper smooth curve over $\mathbb{Z}[\frac{1}{N},\zeta_N]$. Moreover, we obtain as in \cite[(9.1.4.5)]{KaMa} that $\mathcal{M}(N)$ is a scheme over the $j$-line $\mathrm{Spec}(\mathbb{Z}[\frac{1}{N},\zeta_N,j])$. By \cite[Prop. 9.1.7]{KaMa}, the canonical level $N$ moduli problem over $\mathbb{Z}[\frac{1}{N},\zeta_N]$ defined in \cite[\S9.1 and \S9.4]{KaMa} is representable by a scheme $\mathcal{M}(N)^{\mathrm{can}}$ that is isomorphic to $\mathcal{M}(N)$ as $\mathbb{Z}[\frac{1}{N},\zeta_N,j]$-schemes. Moreover, by \cite[Prop. 9.3.1]{KaMa}, we obtain that the normalization $\overline{\mathcal{M}}(N)^{\mathrm{can}}$ of the projective $j$-line  $\mathbb{P}^1_{\mathbb{Z}[\frac{1}{N},\zeta_N]}$ in $\mathcal{M}(N)^{\mathrm{can}}$ is isomorphic to $\overline{\mathcal{M}}(N)$ as proper smooth $\mathbb{Z}[\frac{1}{N},\zeta_N]$-schemes over $\mathbb{P}^1_{\mathbb{Z}[\frac{1}{N},\zeta_N]}$. By \cite[\S1.4]{Katz1973}, the curve $\mathcal{M}(N)\otimes_{\mathbb{Z}[\frac{1}{N}]}\mathbb{Z}[\frac{1}{N},\zeta_N]$ (resp. $\overline{\mathcal{M}}(N)\otimes_{\mathbb{Z}[\frac{1}{N}]}\mathbb{Z}[\frac{1}{N},\zeta_N]$) is a disjoint union of $\varphi(N)$ affine (resp. proper) smooth geometrically connected curves over $\mathbb{Z}[\frac{1}{N},\zeta_N]$ (see also \cite[(9.4.3.1)]{KaMa}). 
In particular, this identifies $\mathcal{M}(N)^{\mathrm{can}}$ (resp. $\overline{\mathcal{M}}(N)^{\mathrm{can}}$) with any one of these geometrically connected components of $\mathcal{M}(N)\otimes_{\mathbb{Z}[\frac{1}{N}]}\mathbb{Z}[\frac{1}{N},\zeta_N]$ (resp. $\overline{\mathcal{M}}(N)\otimes_{\mathbb{Z}[\frac{1}{N}]}\mathbb{Z}[\frac{1}{N},\zeta_N]$). 
Note that by \cite[(9.4.1) and (9.4.3.1)]{KaMa}, we have a natural right action of $\mathrm{SL}(2,\mathbb{Z}/N)$ on the canonical level $N$ moduli problem over $\mathbb{Z}[\frac{1}{N},\zeta_N]$, and hence on $\overline{\mathcal{M}}(N)^{\mathrm{can}}$.

It follows from the extension of the Kodaira-Spencer isomorphism to $\overline{\mathcal{M}}(N)$ in \cite[\S1.5]{Katz1973} (see also \cite[Thm. 10.13.11]{KaMa}) that $\HH^0(\overline{\mathcal{M}}(N),\Omega_{\overline{\mathcal{M}}(N)})$ equals the space of holomorphic weight $2$ cusp forms of level $N$ defined over $\mathbb{Z}[\frac{1}{N}]$. By \cite[\S1.2]{Katz1973}, each holomorphic weight $2$ cusp form of level $N$ defined over $\mathbb{Z}[\frac{1}{N}]$ has $q$-expansion coefficients in $\mathbb{Z}[\frac{1}{N},\zeta_N]$ at all cusps.   Since $\mathbb{Z}[\frac{1}{N}]\hookrightarrow \mathbb{Z}[\frac{1}{N},\zeta_N]$ is \'etale,  the $q$-expansion principle \cite[Cor. 1.6.2]{KaMa} shows that the global sections $\HH^0(\overline{\mathcal{M}}(N)^{\mathrm{can}},\Omega_{\overline{\mathcal{M}}(N)^{\mathrm{can}}})$ are naturally identified with the $\mathbb{Z}[\frac{1}{N},\zeta_N]$-lattice $ \mathcal{S}(\mathbb{Z}[\frac{1}{N},\zeta_N])$ of holomorphic weight $2$ cusp forms for $\Gamma(N)$ that have $q$-expansion coefficients in $\mathbb{Z}[\frac{1}{N},\zeta_N]$ at all the cusps. By \cite[Cor. 10.13.12]{KaMa} (take $\Gamma$ to be trivial), it follows that $\overline{\mathcal{M}}(N)^{\mathrm{can}}$ has geometrically connected fibers that all have the same genus.

If $A$ is a Dedekind domain that contains $\mathbb{Z}[\frac{1}{N},\zeta_N]$, then $\overline{\mathcal{M}}(N)^{\mathrm{can}}\otimes_{\mathbb{Z}[\frac{1}{N},\zeta_N]} A$ defines a smooth projective canonical model $\mathcal{X}(N)$ over $A$ of the modular curve associated to $\Gamma(N)$. By flat base change and using \cite[\S1.6]{Katz1973}, we see that the global sections $\HH^0(\mathcal{X}(N),\Omega_{\mathcal{X}(N)})$ are naturally identified with the $A$-lattice $ \mathcal{S}(A)$ of holomorphic weight $2$ cusp forms for
$\Gamma(N)$ that have  $q$-expansion coefficients in $A$ at all the cusps. Using flat base change on the residue fields, we moreover obtain that $\mathcal{X}(N)$ has geometrically connected fibers that all have the same genus.

Let now $\ell\neq p$ be prime numbers and assume $\ell\ge 3$. Let $F$ be a number field that is unramified over $p$ and that 
contains a primitive $\ell^{\mathrm{th}}$ root of unity $\zeta_\ell$.  Suppose
$A$ is a Dedekind subring of $F$ that has fraction field $F$ and that contains 
$\mathbb{Z}[\frac{1}{\ell},\zeta_\ell]$.  
Let $\mathcal{V}(F,p)$ be the set of places $v$ of $F$ over $p$, and let $\mathcal{O}_{F,v}$ be the ring of
integers of the completion $F_v$ of $F$ at $v$.  We assume $A$ is contained in 
$\mathcal{O}_{F,v}$ for all $v \in \mathcal{V}(F,p)$. Let $\mathcal{X}(\ell)$ be the smooth projective canonical model over $A$ of the modular curve associated to $\Gamma(\ell)$ constructed above.

For $v\in \mathcal{V}(F,p)$, let $\mathfrak{m}_{F,v}$ be the maximal ideal of $\mathcal{O}_{F,v}$.
Define $\mathcal{P}_v = A \cap \mathfrak{m}_{F,v}$ which is a maximal ideal over $p$ in $A$, and
define $k(v) = A/\mathcal{P}_v$ to be the corresponding residue field. 
Then 
\begin{equation}
\label{eq:Xv}
\mathcal{X}_v(\ell) = k(v) \otimes_A \mathcal{X}(\ell)
\end{equation} 
is a smooth projective curve over $k(v)$, and
$$(A/pA) \otimes_{A} \mathcal{X}(\ell) = \coprod_{v \in \mathcal{V}(F,p)} \mathcal{X}_v(\ell).$$
Since $k(v)$ is a finite field for all $v\in \mathcal{V}(F,p)$, we can identify its algebraic
closure $\overline{k(v)}$ with $\overline{\mathbb{F}}_p$. Let $k$ be an algebraically closed field 
containing $\overline{\mathbb{F}}_p$, and hence
containing $k(v)$ for all $v\in \mathcal{V}(F,p)$. Then the reduction of $\mathcal{X}(\ell)$ modulo $p$ over $k$, 
which is denoted by $X_p(\ell)$  in \cite{BeCaGu}, is defined as
\begin{equation}
\label{eq:reductionmodular}
X_p(\ell)=k\otimes_{k(v)} \mathcal{X}_v(\ell) 
\end{equation}
for all $v\in \mathcal{V}(F,p)$.
Since $\mathcal{X}(\ell)$ has geometrically connected fibers that all have the same genus, it follows that the injective maps
$$\frac{\HH^0(\mathcal{X}(\ell),\Omega_{\mathcal{X}(\ell)})}{\mathcal{P}_v \cdot \HH^0(\mathcal{X}(\ell),\Omega_{\mathcal{X}(\ell)})} \to \HH^0(\mathcal{X}_v(\ell),\Omega_{\mathcal{X}_v(\ell)})$$
and
$$\frac{\HH^0(\mathcal{X}(\ell),\Omega_{\mathcal{X}(\ell)})}{p\cdot \HH^0(\mathcal{X}(\ell),\Omega_{\mathcal{X}(\ell)})}
\to \bigoplus_{v \in \mathcal{V}(F,p)} \HH^0(\mathcal{X}_v(\ell),\Omega_{\mathcal{X}_v(\ell)})$$
are isomorphisms.
When $k=\overline{\mathbb{F}}_p$ in (\ref{eq:reductionmodular}) then this last isomorphism gives an isomorphism
$$\overline{\mathbb{F}}_p \otimes_{\mathbb{Z}} \HH^0(\mathcal{X}(\ell),\Omega_{\mathcal{X}(\ell)})
= \HH^0(X_p(\ell),\Omega_{X_p(\ell)})^{[F:\mathbb{Q}]}$$
which is equivariant with respect to the action of $\mathrm{SL}(2,\mathbb{Z}/\ell)$ on $\mathcal{X}(\ell)$.

Let $G=\mathrm{PSL}(2,\mathbb{Z}/\ell)=\mathrm{PSL}(2,\mathbb{F}_\ell)$, let $k$
be an algebraically closed field containing $\overline{\mathbb{F}}_p$, and let 
$X_p(\ell)$ be the reduction of $\mathcal{X}(\ell)$ modulo $p$ over $k$.
By \cite[Thm. 1.1]{BeCaGu}, 
if $\ell\ge 7$ then $\mathrm{Aut}(X_p(\ell))=G$ unless $p=3$ and $\ell\in\{7,11\}$. Moreover, 
$\mathrm{Aut}(X_3(7))\cong \mathrm{PGU}(3,\mathbb{F}_3)$ and $\mathrm{Aut}(X_3(11))\cong M_{11}$.
If $\ell<7$ then $X_p(\ell)$ has genus 0. 

The genus $g(X_p(\ell))$ is given as  (see, for example, \cite[Cor. 3.2]{BeCaGu})
\begin{equation}
\label{eq:genus}
g(X_p(\ell))-1 = (\ell-1)(\ell+1)(\ell-6)/24.
\end{equation}

\begin{remark}
\label{rem:ramimodular}
Suppose $\ell\ge 7$, and define $X=X_p(\ell)$. By \cite[Prop. 5.5]{moreno1993algebraic}, the genus of
$X/G$ is zero, and the lower ramification groups associated to the cover $X \rightarrow X/G$ are as follows:
\begin{enumerate}
\item[(i)]  If $p>3$, then $X\rightarrow X/G$ is branched at $3$ points with inertia groups of order $2,3$ and $\ell$.
\item[(ii)]  If $p=3$, then $X\rightarrow X/G$ is branched at $2$ points with inertia groups $\Sigma_3$ and 
$\mathbb{Z}/\ell$, where $\Sigma_3$ denotes the symmetric group on three letters.
Moreover, in the first case the second ramification group is trivial.
\item[(iii)] If $p= 2$, then $X \rightarrow X/G$ is branched at $2$ points with inertia groups $\mathrm{A}_4$ and $\mathbb{Z}/\ell$, where $\mathrm{A}_4$ denotes the alternating group on four letters.
Moreover, in the first case the second ramification group is trivial.
\end{enumerate}
\end{remark}

If $p>3$, the ramification of $X\to X/G$ is tame and the $k[G]$-module structure of the holomorphic
differentials $\HH^0(X,\Omega_X)$ can be determined using \cite[Thm. 2]{Nakajima:1986} or \cite[Thm. 3]{Kani:86}.
If $p=3$, we will determine in \S \ref{ss:fullmodular} the $k[G]$-module structure of 
$\HH^0(X,\Omega_X)$ using Theorem \ref{thm:main}. Since
the Sylow $2$-subgroups of $G$ are not cyclic, the methods of this article are not sufficient to treat the case when $p=2$.

When the ramification of $X\to X/G$ is tame, we obtain the following result.

\begin{lemma}
\label{lem:tamemodular}
Suppose $p>3$ and $p\neq \ell\ge 7$. Let $X=X_p(\ell)$, and let
$k$ be an algebraically closed field containing $\overline{\mathbb{F}}_p$. 
\begin{itemize}
\item[(i)] The $k[G]$-module $\HH^0(X,\Omega_X)$ is a direct sum of the form 
$\overline{P}\oplus \overline{U}$ in which $\overline{P}$ is a projective $k[G]$-module and
$\overline{U}$ is either the zero module or a 
single uniserial non-projective $k[G]$-module that belongs to the principal block of $k[G]$.
\item[(ii)] Let $v\in\mathcal{V}(F,p)$, let $k_1$ be a perfect field containing $k(v)$,
and let $k$ be an algebraic closure of $k_1$.
Define $X_1=k_1\otimes_{k(v)}\mathcal{X}_v(\ell)$
where $\mathcal{X}_v(\ell)$ is as in $(\ref{eq:Xv})$.
The $k_1[G]$-module $\HH^0(X_1,\Omega_{X_1})$ is a direct sum of the form 
$\overline{P}_1\oplus \overline{U}_1$ in which $\overline{P}_1$ is a projective $k_1[G]$-module and
$\overline{U}_1$ is either the zero module or a 
single indecomposable non-projective $k_1[G]$-module that belongs to the principal block of $k_1[G]$.
Moreover, the $k[G]$-module $\overline{U}$ from part $(i)$ is isomorphic to 
$k\otimes_{k_1}\overline{U}_1$.
\end{itemize}
The decompositions of $\HH^0(X,\Omega_X)$ as in $(i)$ and 
of  $\HH^0(X_1,\Omega_{X_1})$ as in $(ii)$ are both
determined by the ramification data associated to the cover $X\to X/G$.
\end{lemma}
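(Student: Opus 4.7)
The plan is to deduce the lemma from Nakajima's exact sequence (\ref{eq:Nakajimasequence}) together with the representation theory of blocks with cyclic defect. By Remark \ref{rem:ramimodular}(i), all non-trivial inertia groups of $X\to X/G$ have order $2$, $3$, or $\ell$, each coprime to $p$ since $p>3$ and $p\ne\ell$, so the cover $X\to X/G$ is tamely ramified. Applying (\ref{eq:Nakajimasequence}) to $\mathcal{E}=\Omega_X$ produces
\[
0 \to \HH^0(X,\Omega_X) \to L^0 \to L^1 \to \HH^1(X,\Omega_X) \to 0
\]
with $L^0,L^1$ projective $k[G]$-modules. Since $X$ is geometrically connected, Serre duality identifies $\HH^1(X,\Omega_X)$ with the trivial simple module $k$. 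Splitting the four-term sequence through $N=\ker(L^1\to k)$ and absorbing the projective summands of $L^0$ and $L^1$ shows that, in the stable module category of $k[G]$, we have $[\HH^0(X,\Omega_X)]=[\Omega^{2}(k)]$, where $\Omega$ denotes the Heller translate; hence
\[
\HH^0(X,\Omega_X)\;\cong\;\Omega^{2}(k)\,\oplus\,P
\]
for some projective $k[G]$-module $P$.

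For $p>3$ and $p\ne\ell$, the Sylow $p$-subgroups of $G=\mathrm{PSL}(2,\mathbb{F}_\ell)$ are cyclic, so every block of $k[G]$ is a Brauer tree algebra and each indecomposable module in the principal block $B_0$ is uniserial. Since the trivial module lies in $B_0$ and Heller translation preserves blocks, $\Omega^{2}(k)$ is a single uniserial non-projective $B_0$-module (which is zero if $p\nmid\#G$), independent of the ramification data. Setting $\overline{U}:=\Omega^{2}(k)$ gives the decomposition asserted in (i). The projective summand $P$ is pinned down by its Brauer character, which is computable from the alternating sum of Brauer characters in (\ref{eq:Nakajimasequence}) via \cite[Thm.~2]{Nakajima:1986} in terms of the inertia groups and their fundamental characters, so both $P$ and $\overline{U}$ are determined by the ramification data.

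For part (ii), the same argument runs over $k_1$: Nakajima's sequence is valid because $k_1$ is perfect and $X_1\to X_1/G$ inherits the tame ramification of $X\to X/G$, and Serre duality gives $\HH^1(X_1,\Omega_{X_1})=k_1$. This yields $\HH^0(X_1,\Omega_{X_1})\cong \Omega_{k_1[G]}^{2}(k_1)\oplus P_1$ with $P_1$ projective. Since projective covers, and therefore Heller translates, commute with extension of scalars along the field extension $k/k_1$, we obtain $k\otimes_{k_1}\Omega_{k_1[G]}^{2}(k_1)\cong \Omega_{k[G]}^{2}(k)=\overline{U}$, which is indecomposable over $k$; consequently $\overline{U}_1:=\Omega_{k_1[G]}^{2}(k_1)$ is absolutely indecomposable, lies in the principal block of $k_1[G]$, and satisfies $\overline{U}\cong k\otimes_{k_1}\overline{U}_1$. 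Flat base change of cohomology gives $\HH^0(X,\Omega_X)\cong k\otimes_{k_1}\HH^0(X_1,\Omega_{X_1})$, and the Krull--Schmidt--Azumaya theorem then forces $P\cong k\otimes_{k_1}P_1$, completing the descent. The main technical point I expect to handle carefully is confirming that $\Omega_{k_1[G]}^{2}(k_1)$ is already indecomposable before base extension; this reduces to the absolute simplicity of the trivial $k_1[G]$-module combined with the compatibility of Heller translates with scalar extension.
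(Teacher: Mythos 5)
Your proposal follows the paper's own proof almost step for step: Nakajima's sequence (\ref{eq:Nakajimasequence}) plus $\HH^1(X,\Omega_X)\cong k$ gives $\HH^0(X,\Omega_X)\cong \Omega^2(k)\oplus P$ with $P$ projective and pinned down by its Brauer character via \cite[Thm.~2]{Nakajima:1986}, and part (ii) descends via compatibility of projective covers (hence Heller translates) with scalar extension over the perfect field $k_1$, indecomposability of $\overline{U}_1$ from indecomposability of $k\otimes_{k_1}\overline{U}_1\cong\overline{U}$, and Krull--Schmidt--Azumaya for the projective part. This is exactly the paper's argument; the paper merely routes the $k_1$-versus-$k$ comparison through an intermediate finite extension $k_2$, which your more direct phrasing also covers.

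The one step that does not hold up as stated is your justification of uniseriality: it is not true that every indecomposable module over a Brauer tree algebra is uniserial, nor even that every syzygy of a simple module is uniserial. For a Brauer tree which is a line with three edges $S_1-S_2-S_3$, the module $\Omega(S_2)=\mathrm{rad}\,P(S_2)$ has head $S_1\oplus S_3$, and one checks that $\Omega^2(S_2)$ has socle $S_1\oplus S_3$; neither is uniserial. So ``cyclic Sylow $p$-subgroups, hence Brauer tree algebra, hence uniserial'' is a non sequitur. What saves the statement is the specific shape of the principal block of $k[\mathrm{PSL}(2,\mathbb{F}_\ell)]$ for $p>3$, $p\neq\ell$: by the explicit description in \cite{Burkhardt} its Brauer tree has only two edges, so all of its indecomposables (in particular $\Omega^2(k)$) are indeed uniserial. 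The paper cites \cite{Burkhardt} for precisely this point, and you need to do the same; everything else in your argument, including the determination of $P$ and $\overline{U}$ from the ramification data and the descent in part (ii), is sound.
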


\begin{proof}
By (\ref{eq:Nakajimasequence}), there exist finitely generated projective $k[G]$-modules
$P_1$ and $P_0$ together with an  exact sequence of $k[G]$-modules
\begin{equation}
\label{eq:Nakajimaagain}
0\to \HH^0(X,\Omega_X) \to P_1 \to P_0\to \HH^1(X,\Omega_X) \to 0.
\end{equation}
If $p$ does not divide $\#G$ then (\ref{eq:Nakajimaagain}) splits and
$\HH^0(X,\Omega_X)$ is a projective $k[G]$-module, which means $\overline{U}=\{0\}$. 
Suppose now that $p$ divides $\#G$.
Since $\HH^1(X,\Omega_X)$ is the trivial simple $k[G]$-module $k$, it follows that, as a $k[G]$-module,
$\HH^0(X,\Omega_X)$ is isomorphic to the direct sum of a projective $k[G]$-module and the
second syzygy $\overline{U}$ of the trivial simple $k[G]$-module $k$. Recall that $\overline{U}$ is defined as
follows (see, e.g., \cite[\S IV.3]{ARS}). Let $P(k)$ be the projective $k[G]$-module cover of $k$, 
let $R(k)$ be the Jacobson radical of $P(k)$, and let $P(R(k))$ be the projective $k[G]$-module cover of $R(k)$. 
Then the kernel of the natural projection from $P(R(k)) \to R(k)$ 
is the second syzygy $\overline{U}$ of the trivial simple $k[G]$-module $k$. 
Since syzygy modules of indecomposable non-projective $k[G]$-modules are always indecomposable non-projective
(see, e.g., \cite[Prop. IV.3.6]{ARS}), $\overline{U}$ is indecomposable non-projective.
The explicit description of the blocks of $k[G]$ in \cite{Burkhardt}
shows moreover that $\overline{U}$ is uniserial. Therefore, $\overline{U}$ is a uniserial 
non-projective $k[G]$-module belonging to the principal block 
of $k[G]$. The definition of $\overline{U}$ determines its Brauer character. 
Since projective $k[G]$-modules are uniquely determined by their Brauer characters,
it now follows from \cite[Thm. 2 and Eq. (*) on p. 120]{Nakajima:1986} that,
for all $p$, the decomposition of 
$\HH^0(X,\Omega_X)$ into a direct sum of indecomposable $k[G]$-modules
is determined by the ramification data associated to 
the cover $X\to X/G$. This proves part (i) in addition to the last sentence of
the statement of Lemma \ref{lem:tamemodular} about the
decomposition in part (i).

For part (ii), we note that tensoring with $k$ over $k_1$
sends a projective $k_1[G]$-module cover of a $k_1[G]$-module $V_1$
to a projective $k[G]$-module cover of $k\otimes_{k_1} V_1$.
If $p$ does not divide $\#G$, let $\overline{U}_1=\{0\}$. 
Suppose now that $p$ divides $\#G$.
If $P(k_1)$ is the projective $k_1[G]$-module cover of the
trivial simple $k_1[G]$-module $k_1$ then $P(k)=k\otimes_{k_1}P(k_1)$,
where $P(k)$ is as above.
Therefore, if $R(k_1)$ is the Jacobson radical of $P(k_1)$ then
$R(k)=k\otimes_{k_1}R(k_1)$. Additionally, if $P(R(k_1))$ is the projective 
$k_1[G]$-module cover of $R(k_1)$ then this implies that the kernel 
of the natural projection $P(R(k_1))\to R(k_1)$ is a
$k_1[G]$-module $\overline{U}_1$  that satisfies
\begin{equation}
\label{eq:sorealizable}
\overline{U} \cong k\otimes_{k_1}\overline{U}_1
\end{equation}
as $k[G]$-modules. In other words, $\overline{U}$ is realizable over $k_1$.
Since $\overline{U}$ is an indecomposable $k[G]$-module, it follows
that $\overline{U}_1$ is an indecomposable $k_1[G]$-module.
Note that $\overline{U}_1$ belongs to the principal block of $k_1[G]$.

For all $p$, let now $k_2$ be a finite field extension of $k_1$
such that $k_2\subseteq k$ and such that all the indecomposable
$k[G]$-modules occurring in the decomposition of $\HH^0(X,\Omega_X)$
are realizable over $k_2$. Letting $X_2=k_2\otimes_{k_1}X_1$, and using 
(\ref{eq:sorealizable}) if $p$ divides $\#G$, we obtain that the $k_2[G]$-module 
$\HH^0(X_2,\Omega_{X_2})$ is a direct sum of a projective 
$k_2[G]$-module and the indecomposable $k_2[G]$-module
$k_2\otimes_{k_1}\overline{U}_1$ (which is zero if $p$ does not divide $\#G$).
Moreover, the decomposition of $\HH^0(X_2,\Omega_{X_2})$ into a direct sum of 
indecomposable $k_2[G]$-modules is determined by the ramification data 
associated to the cover $X\to X/G$. 
We have
$$k_2\otimes_{k_1}\HH^0(X_1,\Omega_{X_1})\cong \HH^0(X_2,\Omega_{X_2})$$
as $k_2[G]$-modules, and
$$\HH^0(X_2,\Omega_{X_2})\cong \HH^0(X_1,\Omega_{X_1})^{[k_2:k_1]}$$
as $k_1[G]$-modules.
Note that the restriction of each projective indecomposable $k_2[G]$-module 
to a $k_1[G]$-module is a projective $k_1[G]$-module. We can therefore use the
Krull-Schmidt-Azumaya theorem to obtain part (ii). 

To prove the last sentence of the statement of Lemma \ref{lem:tamemodular} 
about the decomposition in part (ii), we note that tensoring with $k_2$ over $k_1$
sends a projective indecomposable $k_1[G]$-module cover of a 
simple $k_1[G]$-module $S_1$ to a projective $k_2[G]$-module cover of 
$k_2\otimes_{k_1} S_1$.  Therefore, it follows that the decomposition of 
$\HH^0(X_1,\Omega_{X_1})$ into indecomposable $k_1[G]$-modules is uniquely 
determined by the decomposition of $\HH^0(X_2,\Omega_{X_2})$
into indecomposable $k_2[G]$-modules. As noted above, the latter is determined 
by the ramification data associated to the cover $X\to X/G$. 
This completes the proof of Lemma \ref{lem:tamemodular}.
\end{proof}

\medskip

\noindent
\textit{Proof of Theorems $\ref{thm:firstmodtheorem}$ and $\ref{thm:secondmodtheorem}$ when $p>3$.}
Suppose $p>3$, and fix $v\in \mathcal{V}(F,p)$. Define $M_{\mathcal{O}_{F,v}}$ to  be the $\mathcal{O}_{F,v}[G]$-module
$$M_{\mathcal{O}_{F,v}}=
\mathcal{O}_{F,v} \otimes_A \HH^0(\mathcal{X}(\ell),\Omega_{\mathcal{X}(\ell)})$$
which is flat over $\mathcal{O}_{F,v}$.
Note that the residue fields $k(v)=A/\mathcal{P}_v$ and $\mathcal{O}_{F,v}/\mathfrak{m}_{F,v}$ coincide.
Define
$$X_v=\mathcal{X}_v(\ell) = k(v) \otimes_A \mathcal{X}(\ell).$$
Then $M_{\mathcal{O}_{F,v}}$ is a lift of the $k(v)[G]$-module
$\HH^0(X_v,\Omega_{X_v})$ over $\mathcal{O}_{F,v}$. 
Let $k=\overline{k(v)} = \overline{\mathbb{F}}_p$, and let 
$X=X_p(\ell)$ be the reduction of $\mathcal{X}(\ell)$ modulo $p$ over $k$, as in
(\ref{eq:reductionmodular}). In other words, $X=k\otimes_{k(v)}X_v$ and 
$\HH^0(X,\Omega_X)=k\otimes_{k_v}\HH^0(X_v,\Omega_{X_v})$ as $k[G]$-modules. 
Since $\HH^0(X,\Omega_X)=\{0\}$ for $\ell <7$, we
can assume that $\ell\ge 7$.

By Lemma \ref{lem:tamemodular}(ii),
$\HH^0(X_v,\Omega_{X_v})$ is a direct sum of a projective $k(v)[G]$-module and
a $k(v)[G]$-module $\overline{U}_v$, where $\overline{U}_v$ is either the zero module or a single 
indecomposable non-projective $k(v)[G]$-module that belongs to the principal block of $k(v)[G]$.
By the Theorem on Lifting Idempotents (see \cite[Thm. (6.7)]{CRI} and \cite[Prop. (56.7)]{CRII}) 
and by \cite[Prop. 2.6]{BleChi2000}, it follows that 
$M_{\mathcal{O}_{F,v}}$ is isomorphic to a direct sum of a projective $\mathcal{O}_{F,v}[G]$-module and 
an $\mathcal{O}_{F,v}[G]$-module $U$ that is a lift of $\overline{U}_v$
over $\mathcal{O}_{F,v}$. Moreover, if $\overline{U}_v$ is not zero then $U$ is a
single indecomposable non-projective $\mathcal{O}_{F,v}[G]$-module that 
belongs to the principal block of $\mathcal{O}_{F,v}[G]$.
Since, by Lemma \ref{lem:tamemodular}, the decomposition of $\HH^0(X_v,\Omega_{X_v})$ is determined by the 
ramification data associated to the cover $X\to X/G$, this implies Theorem \ref{thm:firstmodtheorem}
for $p>3$.

We now turn to the proof of Theorem \ref{thm:secondmodtheorem} when $p>3$. 
In particular, we assume now that $F$ contains a root of unity of order equal to the prime to $p$ part of 
the order of $G$.  By the discussion in the previous paragraph, $M_{\mathcal{O}_{F,v}}$ 
is a direct sum over blocks $B$ of $\mathcal{O}_{F,v}[G]$ of modules of the form $P_B \oplus U_B$ 
in which $P_B$ is projective and $U_B$ is either the zero module or a single indecomposable 
non-projective $B$-module.  Moreover, we know that $U_B$ is non-zero if and only if $B$ is the principal
block. Define $M_B = P_B\oplus U_B$.

Let $\mathfrak{a}$ be the maximal ideal over $p$ in $A$ associated to $v$.  
In other words, $\mathfrak{a}$ corresponds to the maximal ideal $\mathfrak{m}_{F,v}$ of $\mathcal{O}_{F,v}$.
Consider a $\mathbb{T}$-stable decomposition (\ref{eq:decomp}) that is $G$-isotypic, 
in the sense that it arises from idempotents as in $(\ref{eq:centralidem})$.
Since $M_{\mathcal{O}_{F,v}}$ is the direct sum over blocks $B$ of $\mathcal{O}_{F,v}[G]$
of the modules $M_B$ and since
for different blocks $B$ and $B'$ there are no non-trivial congruences modulo $\mathfrak{m}_{F,v}$ between
$M_B$ and $M_{B'}$, it follows that a $G$-isotypic $\mathbb{T}$-stable decomposition (\ref{eq:decomp}) results in non-trivial 
congruences modulo $\mathfrak{a}$ if and only if there is a block $B$ of $\mathcal{O}_{F,v}[G]$
such that 
\begin{equation}
\label{eq:congruentblockwise}
M_B \neq (M_B \cap e_1 M_B) \oplus (M_B \cap e_2 M_B).
\end{equation}
Now fix a block $B$ of $\mathcal{O}_{F,v}[G]$. Since there are no non-trivial congruences modulo 
$\mathfrak{m}_{F,v}$ between $P_B$ and $U_B$, there will be orthogonal idempotents $e_1$ and $e_2$ 
for which (\ref{eq:congruentblockwise}) holds if and only if this holds when $M_B$ is replaced by either $P_B$ or  
$U_B$. If $B$ has trivial defect groups, then $U_B=\{0\}$ and $F_v\otimes_{\mathcal{O}_{F,v}}P_B$
involves only one $G$-isotypic component, which means that there are no orthogonal idempotents $e_1$ and $e_2$ for which (\ref{eq:congruentblockwise}) holds for $B$. Assume now that $B$ has non-trivial defect groups. 
If $P_B\ne \{0\}$ then $P_B$ is a direct sum 
of non-zero projective indecomposable $B$-modules. 
When we tensor any non-zero projective indecomposable 
$B$-module $Q_B$ with $F_v$ over $\mathcal{O}_{F,v}$, then the
resulting $F_v[G]$-module $F_v \otimes_{\mathcal{O}_{F,v}} Q_B$ 
has at least two non-isomorphic 
irreducible constituents. This means that $Q_B$ cannot be equal to the
direct sum of the intersections of $Q_B$ with the $G$-isotypic components
of $F_v \otimes_{\mathcal{O}_{F,v}} Q_B$. 
Therefore, there exist orthogonal idempotents $e_1$ and $e_2$ 
for which (\ref{eq:congruentblockwise}) holds when $M_B$ is replaced by $P_B$. 
Now suppose $U_B\neq\{0\}$. Then there exist orthogonal idempotents $e_1$ and $e_2$ 
for which (\ref{eq:congruentblockwise}) holds when $M_B$ is replaced by $U_B$ if and only if
$U_B$ is not equal to the
direct sum of the intersections of $U_B$ with the $G$-isotypic components
of $F_v \otimes_{\mathcal{O}_{F,v}} U_B$. 
But the latter occurs if and only if $F_v \otimes_{\mathcal{O}_{F,v}} U_B$ has two non-isomorphic 
irreducible constituents. This completes the proof of Theorem \ref{thm:secondmodtheorem}
for $p>3$.
\hspace*{\fill}$\Box$

\section{Holomorphic differentials of the modular curves $X(\ell)$ modulo $3$}
\label{s:modular3}

Assume the notation of \S \ref{s:modular} for $p=3$. In particular, $\ell\ne 3$ is an odd prime number,
$k$ is an algebraically closed field containing $\overline{\mathbb{F}}_3$, and 
$X=X_3(\ell)$ is the reduction of $\mathcal{X}(\ell)$ modulo 3 over $k$, as in (\ref{eq:reductionmodular}).
Since $X_3(5)$ has genus zero, we assume $\ell \ge 7$.
Let $G=\mathrm{PSL}(2,\mathbb{F}_\ell)$.

Our goal is to determine explicitly the $k[G]$-module structure of $\HH^0(X,\Omega_X)$.
In particular, this will prove part (i) of Theorem \ref{thm:modularresult}. 
At the end of this section, we will prove part (ii) of Theorem \ref{thm:modularresult} in \S \ref{ss:separate},
and we will then use this in \S\ref{ss:finalproof} to prove Theorems 
\ref{thm:firstmodtheorem} and \ref{thm:secondmodtheorem} when $p=3$.

We use that there is precise knowledge about the subgroup structure of $G=\mathrm{PSL}(2,\mathbb{F}_\ell)$
(see, for example, \cite[\S II.8]{Hu:67}).
Define $\epsilon\in\{\pm 1\}$ such that 
\begin{equation}
\label{eq:epsdelt}
\ell\equiv \epsilon\mod 3.
\end{equation}
Write 
\begin{equation}
\label{eq:3Sylow}
\ell-\epsilon = 3^n\cdot 2\cdot m\quad \mbox{ such that $3$ does not divide $m$.}
\end{equation}
Let $P$ be a Sylow 3-subgroup of $G$, so $P$ is cyclic of order $3^n$, and let $P_1$ be the unique subgroup of 
$P$ of order $3$. Let $N_1$ be the normalizer of $P_1$ in $G$. Then $N_1$ is a dihedral group of order $\ell-\epsilon$.
It follows from the Green correspondence (see Remark \ref{rem:bettercyclic})
that the $k[G]$-module structure of $\HH^0(X,\Omega_X)$  is uniquely determined by its $k[N_1]$-module structure
together with its Brauer character. 
The $k[N_1]$-module structure of $\HH^0(X,\Omega_X)$ can be determined from its $k[H]$-module structure for the 
$3$-hypo-elementary subgroups $H$ of $N_1$ that are isomorphic to dihedral groups of order $2\cdot 3^n$, 
respectively to cyclic groups of order $(\ell-\epsilon)/2$. Note that in all cases $N_1$ has a unique cyclic subgroup of order
$(\ell-\epsilon)/2$. If $\ell\equiv -\epsilon \mod 4$ then $N_1$ has a unique conjugacy class of dihedral subgroups of order $2\cdot 3^n$,
whereas if $\ell\equiv \epsilon \mod 4$ then $N_1$ has precisely two conjugacy classes of dihedral subgroups of order $2\cdot 3^n$.

We determine the $k[G]$-module structure of $\HH^0(X,\Omega_X)$ following four  key steps:
\begin{enumerate}
\item[(1)] Determine the lower ramification groups associated to $X\to X/\Gamma$ for $\Gamma\le G$ such that
either $\Gamma$ is a cyclic group of order $(\ell-\epsilon)/2$ or a dihedral group of order $2\cdot 3^n$, or
$\Gamma$ is a maximal cyclic group of order prime to 3.
\item[(2)] Determine the $k[H]$-module structure of $\HH^0(X,\Omega_X)$ when $H$ is a subgroup of $N_1$
that is either dihedral of order $2\cdot 3^n$ or cyclic of order $(\ell-\epsilon)/2$. Use this to determine the $k[N_1]$-module structure of 
$\HH^0(X,\Omega_X)$.
\item[(3)] Determine the Brauer character of $\HH^0(X,\Omega_X)$ as a $k[G]$-module.
\item[(4)] Use (2) and (3), together with the Green correspondence to determine the $k[G]$-module 
structure of $\HH^0(X,\Omega_X)$.
\end{enumerate}
Step (1) is accomplished in \S \ref{ss:rami} and is a computation based on Remark \ref{rem:ramimodular}(ii) and the 
subgroup structure of $G=\mathrm{PSL}(2,\mathbb{F}_\ell)$ as given in \cite[\S II.8]{Hu:67}. Steps 2 and 3
are accomplished in \S \ref{ss:N1} and \S \ref{ss:brauer}
using the key steps in the proof of Theorem \ref{thm:main}, which are  summarized in Remark 
\ref{rem:algorithm}. For Step (4), which is accomplished in \S \ref{ss:fullmodular}, we use 
\cite{Burkhardt}. Note that we have to distinguish four different cases according to the congruence classes of
$\ell$ modulo 3 and 4. The precise $k[G]$-module structure of $\HH^0(X,\Omega_X)$ in all four cases can be found in 
Propositions \ref{prop:fulldifferent1} - \ref{prop:fullequal2}.

\subsection{The lower ramification groups associated to $X\to X/\Gamma$ for certain $\Gamma\le G$}
\label{ss:rami}

We first determine the ramification of $X\to X/\Gamma$ for certain $3$-hypo-elementary subgroups $\Gamma$
of $G$. We need to consider two cases.

\subsubsection{The ramification groups when $\ell\equiv -\epsilon \mod 4$}
\label{sss:ramidifferent}
In this case there is a unique conjugacy class in $G$ of dihedral groups of order $2\cdot 3^n$.
We fix subgroups of $G$ as follows:
\begin{enumerate}
\item[(a)] a cyclic subgroup $V=\langle v\rangle$ of order $(\ell-\epsilon)/2=3^n\cdot m$, where $m$ is odd;
\item[(b)] a dihedral group $\Delta=\langle v',s\rangle$ of order $2\cdot 3^n$, where $v'=v^m\in V$ is an element of order $3^n$
and $s\in N_G(V)-V$ is an element of order 2;
\item[(c)] a cyclic subgroup $W=\langle w\rangle$ of order $(\ell+\epsilon)/2$;
\item[(d)] a cyclic subgroup $R$ of order $\ell$.
\end{enumerate}
Note that $N_G(V)$ is a dihedral group of order $\ell-\epsilon$, $N_G(W)$ is a dihedral group of order $\ell+\epsilon$,
and $N_G(R)$ is a semidirect product with normal subgroup $R$ and cyclic quotient group of order $(\ell-1)/2$.
We now use Remark \ref{rem:ramimodular}(ii) to determine the lower ramification groups associated to $X\to X/\Gamma$ for 
$\Gamma\in\{V,\Delta,W,R\}$.

\begin{enumerate}
\item Let $x\in X$ be a closed point such that $G_x\cong \Sigma_3$. Let $I$ be the unique subgroup of order 3 in $V$. 
Since all subgroups of $G$ isomorphic to $\Sigma_3$ are conjugate in $G$, we can choose a closed point $x\in X$ such that
$G_x=\langle I,s\rangle \cong \Sigma_3$. 
If $g\in G$ then $\Gamma_{gx}=gG_xg^{-1}\cap\Gamma$ can only be non-trivial if $\Gamma\in\{V,\Delta,W\}$. 

Suppose first that $\Gamma$ contains a subgroup of order 3. Then $\Gamma\in\{V,\Delta\}$ and $I\le \Gamma$ is the unique
subgroup of order 3 in $\Gamma$. Let $g\in G$. Then $\Gamma_{gx}=gG_xg^{-1}\cap\Gamma$ contains $I$ if and only if
$G_x\ge g^{-1}Ig$, which happens if and only if $I=g^{-1}Ig$. In other words, this happens if and only if $g\in N_G(I)=N_G(V)$. Therefore,
$$\#\{g G_x\;;\; g\in G, I\le\Gamma_{gx}\} = \#(N_G(V)/G_x) = (\ell-\epsilon)/6.$$
If $\Gamma=\Delta$, we also need to analyze the case when $\Gamma_{gx}\cong \Sigma_3$. This happens if and only if
$g\in N_G(V)$ and $gG_xg^{-1}\cap \Delta$ contains an element of order 2. Since each element of order 2 in 
$G_x$ is conjugate to $s$ by a unique element of $I$, this happens if and only if there exists a unique element $\tau\in I$
such that  $g\tau^{-1}s\tau g^{-1}\in \Delta$. Since each element of order 2 in $\Delta$ is conjugate to $s$ by a unique element
in $\langle v'\rangle$, this happens  if and only if there exists a unique $\tilde{g}\in \langle v'\rangle$
with $\tilde{g}^{-1}g\tau^{-1}\in C_G(s)$. Since $\tilde{g}^{-1}g\tau^{-1}\in N_G(V)$ and $N_G(V)\cap C_G(s) =\{e,s\}\le \Delta$, 
it follows that $g\in N_G(V)$ satisfies $\tilde{g}^{-1}g\tau^{-1}\in C_G(s)$ if and only if $g\in \Delta$. Thus
$$\#\{g G_x\;;\; g\in G, \Delta_{gx}\cong \Sigma_3\}=\#\{gG_x\;;\;g\in \Delta\}=\#(\Delta/G_x)=3^{n-1}.$$
We obtain
\begin{eqnarray*}
\#\{x'\in X\mbox{ closed}\;;\; V_{x'}\cong \mathbb{Z}/3\} &=&(\ell-\epsilon)/6\;=\; 3^{n-1}\cdot m,\\
\#\{x'\in X\mbox{ closed}\;;\; \Delta_{x'}\cong \mathbb{Z}/3\} &=&(\ell-\epsilon)/6-3^{n-1} \;=\; 3^{n-1}\cdot (m-1),\\
\#\{x'\in X\mbox{ closed}\;;\; \Delta_{x'}\cong \Sigma_3\} &=&3^{n-1}.
\end{eqnarray*}

If $\Gamma=\Delta$, it can also happen that $\Gamma_{gx}\cong\mathbb{Z}/2$ for some $g\in G$.  
This happens if and only if $g\in G-N_G(V)$ and $gG_xg^{-1}\cap \Delta$ has order 2. Since each element of order 2 in 
$G_x$ is conjugate to $s$ by a unique element of $I$, this happens if and only if there exists a unique element $\tau\in I$
such that $g\tau^{-1} s \tau g^{-1}\in \Delta$. Since each element of order 2 in $\Delta$ is conjugate to $s$ by a unique element
in $\langle v'\rangle$, this happens  if and only if there exists a unique $\tilde{g}\in \langle v'\rangle$
with $\tilde{g}^{-1}g\tau^{-1}\in C_G(s)$. We have $C_G(s)=N_G(s)$ is a dihedral group of order $\ell+\epsilon$. Moreover,
$C_G(s)\cap N_G(V)=\{e,s\}$, which means that the number of $g\in G-N_G(V)$ such that 
$\tilde{g}^{-1}g\tau^{-1}\in C_G(s)$
for unique $\tilde{g}\in \langle v'\rangle$ and $\tau\in I$ is equal to $(\# \langle v'\rangle)(\#C_G(s)-2)(\# I)$. Hence
$$\#\{g G_x\;;\; g\in G, \Delta_{gx}\cong \mathbb{Z}/2\}=(\# \langle v'\rangle)(\#C_G(s)-2)(\# I)/6$$
meaning
$$\#\{x'\in X\mbox{ closed}\;;\; \Delta_{x'}\cong \mathbb{Z}/2\} =3^n\left(\frac{\ell+\epsilon}{2}-1\right).$$

Suppose finally that $\Gamma=W$. Then it can only happen that $\Gamma_{gx}\cong\mathbb{Z}/2$ for some $g\in G$.  
This happens if and only if $g\in G$ and $gG_xg^{-1}\cap W$ has order 2. Since $W$ has a unique element of order
2 given by $w'=w^{(\ell+\epsilon)/4}$ and each element of order 2 in $G_x$ is conjugate to $s$ by a unique element of $I$,
this happens if and only if there exists a unique element $\tau\in I$ such that
$g\tau^{-1} s \tau g^{-1} = w'$. Let $g_0\in G$ be a fixed element with $g_0w'g_0^{-1}=s$, then this happens if and only
if $g_0g\tau^{-1}\in C_G(s)$. Since $C_G(s)=N_G(s)$ is a dihedral group of order $\ell+\epsilon$ and 3 does not
divide $\ell+\epsilon$, it follows that the number of $g\in G$ such that $g_0g\tau^{-1}\in C_G(s)$ is equal to
$(\ell+\epsilon)(\#I)$. Hence
$$\#\{g G_x\;;\; g\in G, W_{gx}\cong \mathbb{Z}/2\}=(\ell+\epsilon)(\#I)/6$$
meaning
$$\#\{x'\in X\mbox{ closed}\;;\; W_{x'}\cong \mathbb{Z}/2\} =\frac{\ell+\epsilon}{2}.$$

\item Let $x\in X$ be a closed point such that $G_x\cong \mathbb{Z}/\ell$. Since all subgroups of $G$ of order $\ell$ are conjugate,
we can choose a closed point $x\in X$ such that $G_x=R$. If $g\in G$ then $\Gamma_{gx}=gG_xg^{-1}\cap\Gamma$ 
can only be non-trivial if $\Gamma=R$. Moreover, $R_{gx}$ is non-trivial if and only if it is equal to $R$, which
happens if and only if $g\in N_G(R)$. Thus
$$\#\{g G_x\;;\; g\in G, R_{gx}\cong \mathbb{Z}/\ell\}=\#(N_G(R)/G_x)$$
meaning
$$\#\{x'\in X\mbox{ closed}\;;\; R_{x'}\cong \mathbb{Z}/\ell\}= (\ell-1)/2.$$
\end{enumerate}

\subsubsection{The ramification groups when $\ell\equiv \epsilon \mod 4$}
\label{sss:ramiequal}
In this case $\ell-\epsilon$ is divisible by 12, and $m$ is even.
There are precisely two conjugacy classes in $G$ of dihedral groups of order $2\cdot 3^n$.
We fix subgroups of $G$ as follows:
\begin{enumerate}
\item[(a)] a cyclic subgroup $V=\langle v\rangle$ of order $(\ell-\epsilon)/2=3^n\cdot m$, where $m$ is even;
\item[(b)] two non-conjugate dihedral groups $\Delta_1=\langle v',s\rangle$ and $\Delta_2=\langle v',vs\rangle$
of order $2\cdot 3^n$, where $v'=v^m$ and $s\in N_G(V)-V$ is an element of order 2;
\item[(c)] a cyclic subgroup $W=\langle w\rangle$ of order $(\ell+\epsilon)/2$;
\item[(d)] a cyclic subgroup $R$ of order $\ell$.
\end{enumerate}
Similarly to \S\ref{sss:ramidifferent}, $N_G(V)$ is a dihedral group of order $\ell-\epsilon$, $N_G(W)$ is a dihedral group of order $\ell+\epsilon$,
and $N_G(R)$ is a semidirect product with normal subgroup $R$ and cyclic quotient group of order $(\ell-1)/2$.
We now use Remark \ref{rem:ramimodular}(ii) to determine the lower ramification groups associated to $X\to X/\Gamma$ for 
$\Gamma\in\{V,\Delta_1,\Delta_2,W,R\}$.

\begin{enumerate}
\item Let $x\in X$ be a closed point such that $G_x\cong \Sigma_3$. Let $I$ be the unique subgroup of order 3 in $V$. 
There are two conjugacy classes of subgroups of $G$ isomorphic to $\Sigma_3$, which are represented by
$\langle I,s\rangle$ and $\langle I,vs\rangle$. Since there is exactly one branch point in $X/G$ such that the ramification
points in $X$ above it have inertia groups isomorphic to $\Sigma_3$,
only one of these two conjugacy classes occurs as inertia groups. Without loss of generality, assume there
exists a closed point $x\in X$ such that $G_x=\langle I,s\rangle \cong \Sigma_3$. 
If $g\in G$ then $\Gamma_{gx}=gG_xg^{-1}\cap\Gamma$ can only be non-trivial if $\Gamma\in\{V,\Delta_1,\Delta_2,W\}$. 

Suppose first that $\Gamma$ contains a subgroup of order 3. Then $\Gamma\in\{V,\Delta_1,\Delta_2\}$ and $I\le \Gamma$ 
is the unique subgroup of order 3 in $\Gamma$. We argue as in \S\ref{sss:ramidifferent} to see that
$$\#\{g G_x\;;\; g\in G, I\le\Gamma_{gx}\} = \#(N_G(V)/G_x) = (\ell-\epsilon)/6.$$
If $\Gamma=\Delta_1$, we also need to analyze the case when $\Gamma_{gx}\cong \Sigma_3$. 
Arguing as in \S \ref{sss:ramidifferent}, we see this happens if and only if
there exist unique elements $\tau\in I$ and $\tilde{g}\in \langle v'\rangle$
with $\tilde{g}^{-1}g\tau^{-1}\in C_G(s)$. 
If $z=v^{(\ell-\epsilon)/4}$ is the unique non-trivial central element of $N_G(V)$, then
$C_G(s)\cap N_G(V)=\{e,s,z,zs\}$. Since $\tilde{g}^{-1}g\tau^{-1}\in N_G(V)$, it follows that 
$g\in N_G(V)$ satisfies $\tilde{g}^{-1}g\tau^{-1}\in C_G(s)$ if and only if $g\in \Delta_1$ or $g\in z\Delta_1$. Thus
\begin{eqnarray*}
\#\{g G_x\;;\; g\in G, (\Delta_1)_{gx}\cong \Sigma_3\}&=&\#\{gG_x\;;\;g\in \Delta_1\mbox{ or } g\in z\Delta_1\}\\
&=&2\cdot\#(\Delta_1/G_x) \; =\; 2\cdot 3^{n-1}.
\end{eqnarray*}
We obtain
\begin{eqnarray*}
\#\{x'\in X\mbox{ closed}\;;\; V_{x'}\cong \mathbb{Z}/3\} &=&(\ell-\epsilon)/6\;=\; 3^{n-1}\cdot m,\\
\#\{x'\in X\mbox{ closed}\;;\; (\Delta_1)_{x'}\cong \mathbb{Z}/3\} &=&(\ell-\epsilon)/6-2\cdot 3^{n-1} \;=\; 3^{n-1}\cdot (m-2),\\
\#\{x'\in X\mbox{ closed}\;;\; (\Delta_2)_{x'}\cong \mathbb{Z}/3\} &=&(\ell-\epsilon)/6\;=\; 3^{n-1}\cdot m,\\
\#\{x'\in X\mbox{ closed}\;;\; (\Delta_1)_{x'}\cong \Sigma_3\} &=&2\cdot 3^{n-1}.
\end{eqnarray*}

In all three cases $\Gamma\in\{V,\Delta_1,\Delta_2\}$, it can also happen that $\Gamma_{gx}\cong\mathbb{Z}/2$ for some $g\in G$.  
Arguing similarly as in \S\ref{sss:ramidifferent}, we obtain
\begin{eqnarray*}
\#\{x'\in X\mbox{ closed}\;;\; V_{x'}\cong \mathbb{Z}/2\} &=&\frac{\ell-\epsilon}{2}\;=\;3^n\cdot m,\\
\#\{x'\in X\mbox{ closed}\;;\; (\Delta_1)_{x'}\cong \mathbb{Z}/2\} &=&3^n\left(\frac{\ell-\epsilon}{2}-2\right),\\
\#\{x'\in X\mbox{ closed}\;;\; (\Delta_2)_{x'}\cong \mathbb{Z}/2\} &=&3^n\left(\frac{\ell-\epsilon}{2}\right).
\end{eqnarray*}
Since $\#W$ is not divisible by any divisor of $6\ell$, it follows that $W_{x'}=\{e\}$ for all closed points $x'\in X$.

\item Let $x\in X$ be a closed point such that $G_x\cong \mathbb{Z}/\ell$. As in \S\ref{sss:ramidifferent}, we have that
$\Gamma_{gx}=gG_xg^{-1}\cap\Gamma$ can only be non-trivial if $\Gamma=R$. Moreover,
$$\#\{x'\in X\mbox{ closed}\;;\; R_{x'}\cong \mathbb{Z}/\ell\}= (\ell-1)/2.$$
\end{enumerate}

\subsection{The $k[N_1]$-module structure of $\HH^0(X,\Omega_X)$}
\label{ss:N1}

Recall that $P$ is a Sylow 3-subgroup of $G$, $P_1$ is the unique subgroup of $P$ of order 3, and $N_1=N_G(P_1)$,
so $N_1$ is a dihedral group of order $\ell-\epsilon$. In this section, we first determine the $k[H]$-module structure 
of $\HH^0(X,\Omega_X)$ for the 3-hypo-elementary subgroups $H$ of $N_1$ that are isomorphic to dihedral groups 
of order $2\cdot 3^n$, respectively to cyclic groups of order $(\ell-\epsilon)/2$. We then use this to determine the
$k[N_1]$-module structure of $\HH^0(X,\Omega_X)$. Again, we need to consider two cases.

\subsubsection{The $k[N_1]$-module structure when $\ell\equiv -\epsilon \mod 4$}
\label{sss:N1different}

We use the notation from \S\ref{sss:ramidifferent}. In particular, $V=\langle v\rangle$ is a cyclic group of 
order $(\ell-\epsilon)/2 = 3^n\cdot m$, where $m$ is odd, and $\Delta=\langle v',s\rangle$ is a dihedral group of order $2\cdot 3^n$, 
where $v'=v^m$ and $s\in N_G(V)-V$ is an element of order 2. Moreover, let $I$ be the unique subgroup of $V$ of order 3.
We use the key steps in the proof of Theorem \ref{thm:main}, as summarized in Remark \ref{rem:algorithm}, to determine 
the $k[H]$-module structure of $\HH^0(X,\Omega_X)$ for $H\in\{V,\Delta\}$.

In both cases, it follows from \S\ref{sss:ramidifferent} that the subgroup of the Sylow 3-subgroup
$P_H=\langle v'\rangle$ of $H$ generated by the Sylow 3-subgroups of the inertia groups of all closed points in $X$ is
equal to $I=\langle \tau\rangle$, where $\tau=(v')^{3^{n-1}}$. Moreover, there
are precisely $3^{n-1}\cdot m$ closed points $x$ in $X$ with $H_x\ge I$.
In particular, the non-trivial lower ramification groups for any closed point 
$x\in X$ with $I\le H_x$ are $H_{x,1}=I$ and $H_{x,2}=\{e\}$. Let $Y=X/I$. 
For $1\le t\le m$, let $y_{t,1},\ldots, y_{t,3^{n-1}}\in Y$ be points 
that ramify in $X$. For $0\le j\le 2$, we obtain that $\mathcal{L}_j$ from Proposition \ref{prop:filter} is given as 
$\mathcal{L}_j=\Omega_Y(D_j)$, where, by the proof of Proposition \ref{prop:filter} or by step (1) of Remark \ref{rem:algorithm}, 
\begin{equation}
\label{eq:Djmodulardifferent}
D_j=\left\{\begin{array}{ccl} \displaystyle \sum_{t=1}^m\sum_{i=1}^{3^{n-1}} 
y_{t,i} &,&j=0,1,\\0&,&j=2.\end{array}\right.
\end{equation}
Since $3^{n-1}\cdot m$
points in $Y=X/I$ ramify in $X$, the Riemann-Hurwitz theorem shows that 
\begin{equation}
\label{eq:gY}
g(Y )-1 = 3^{n-1}m\cdot \frac{(\ell+\epsilon)(\ell-6)-8}{12}.
\end{equation}

\begin{enumerate}
\item[(a)] We first consider the case $H=V$, so $H\cong (\mathbb{Z}/3^n)\times (\mathbb{Z}/m)$, where $3$ does not 
divide $m$. By \S\ref{sss:ramidifferent}, we have either $V_x=I$ or $V_x=\{e\}$ for all closed points $x\in X$.
If $Z=X/V$, then $Y=X/I\to X/V=Z$ is unramified with Galois group $\overline{V}=V/I$. 

Hence Proposition \ref{prop:fundamental}, or step (2) of Remark \ref{rem:algorithm}, gives the 
following in this situation for $M=\mathrm{Res}_V^G\,\HH^0(X,\Omega_X)$. Let $\gamma(j)$ be the Brauer character of the $k$-dual of 
$(M^{(j+1)}/M^{(j)})$ for $j\in\{0,1,2\}$. Then 
$$\gamma(j)=\delta_{j,2}\,\beta_0 + n_j(V)\,\beta(k[\overline{V}])$$
where 
$$n_0(V)\;=\;n_1(V)=\frac{1}{\#\overline{V}}\, \left(3^{n-1}m + g(Y)-1\right)=1+ \frac{(\ell+\epsilon)(\ell-6)-8}{12}$$
and
\begin{equation}
\label{eq:n2V}
n_2(V)=\frac{1}{\#\overline{V}}\, \left(g(Y)-1\right)= \frac{(\ell+\epsilon)(\ell-6)-8}{12}.
\end{equation}
In particular, $n_1(V)=n_2(V)+1$.
Since $\beta_0$ and $\beta(k[\overline{V}])$ are self-dual, we obtain that the Brauer character of $M^{(j+1)}/M^{(j)}$, for
$j\in\{0,1,2\}$, is equal to
\begin{eqnarray*}
\beta(M^{(1)}/M^{(0)})\;=\,\beta(M^{(2)}/M^{(1)}) &=& (n_2(V)+1)\,\beta(k[\overline{V}]),\\
\beta(M^{(3)}/M^{(2)})&=&\beta_0 + n_2(V)\,\beta(k[\overline{V}]).
\end{eqnarray*}
Using the notation of Remark \ref{rem:indecomposables}, there are $m$ isomorphism classes of simple
$k[V]$-modules, represented by $S_0^{(V)},S_1^{(V)},\ldots,S_{m-1}^{(V)}$, where we use the superscript
$(V)$ to indicate these are simple $k[V]$-modules.

Using the proof of Theorem \ref{thm:main}, or step (3) of Remark \ref{rem:algorithm}, it follows that
$\mathrm{Res}_V^G\,\HH^0(X,\Omega_X)=\mathrm{Res}_V^G\,M$ is a direct sum of $n_2$ copies of $k[V]$
together with an indecomposable $k[V]$-module of $k$-dimension $2\cdot 3^{n-1}+1$ with socle $S_0^{(V)}$
and $m-1$ indecomposable $k[V]$-modules of $k$-dimension $2\cdot 3^{n-1}$ with respective socles given by
$S_1^{(V)},\ldots,S_{m-1}^{(V)}$. Writing $U_{a,b}^{(V)}$ for an indecomposable $k[V]$-module of $k$-dimension $b$
with socle isomorphic to $S_a^{(V)}$, we have
$$\mathrm{Res}_V^G\,\HH^0(X,\Omega_X)\cong n_2(V)\,k[V]\oplus 
U_{0,2\cdot 3^{n-1}+1}^{(V)}\oplus\bigoplus_{t=1}^{m-1}U_{t,2\cdot 3^{n-1}}^{(V)}$$
where $n_2(V)$ is as in (\ref{eq:n2V}).

\item[(b)] We next consider the case $H=\Delta$, so $H\cong (\mathbb{Z}/3^n)\rtimes_\chi (\mathbb{Z}/2)$.
In particular, there are precisely two isomorphism classes of simple $k[\Delta]$-modules, represented by $S_0^{(\Delta)}$ 
and $S_1^{(\Delta)}$, and $S_\chi\cong S_1^{(\Delta)}$.
By \S\ref{sss:ramidifferent}, the possible isomorphism types for non-trivial inertia groups $\Delta_x$ for closed points $x\in X$
are either $\Sigma_3$ or $\mathbb{Z}/3$ or $\mathbb{Z}/2$. Moreover, there are precisely $3^{n-1}$ (resp. $3^{n-1}(m-1)$,
resp. $3^n((\ell+\epsilon)/2-1)$) closed points $x$ in $X$ with $\Delta_x\cong \Sigma_3$ (resp. $\Delta_x\cong \mathbb{Z}/3$, resp. 
$\Delta_x\cong \mathbb{Z}/2$). 
Using the notation introduced above, 
suppose that the inertia groups of the points in $X$ above the points $y_{1,1},\ldots, y_{1,3^{n-1}}\in Y$ are isomorphic to $\Sigma_3$, whereas the inertia
groups of the points in $X$ above the remaining $y_{t,1},\ldots, y_{t,3^{n-1}}\in Y$,
for $2\le t\le m$, are isomorphic to $\mathbb{Z}/3$. 
If $Z=X/\Delta$, then $Y=X/I\to X/\Delta=Z$ is tamely ramified with Galois group $\overline{\Delta}=\Delta/I$. 

The ramification data of the tame cover $Y=X/I\to Z=X/\Delta$ is as follows. There are precisely
$(\ell+\epsilon)/2$ points in $Z$ that ramify in $Y$. Moreover, the inertia group of each of the $3^{n-1}(\ell+\epsilon)/2$ points
in $Y$ lying above these points in $Z$ is isomorphic to $\mathbb{Z}/2$. Let $z_1\in Z$ be the unique point
that ramifies in $X$ with inertia group isomorphic to $\Sigma_3$, and let $z_2,\ldots,z_{(\ell+\epsilon)/2}$ be the
points in $Z$ that ramify in $X$ with inertia group isomorphic to $\mathbb{Z}/2$. Define $y_1=y_{1,1}\in Y$ and let
$y_2,\ldots,y_{(\ell+\epsilon)/2}\in Y$ be points lying above $z_2,\ldots,z_{(\ell+\epsilon)/2}$, respectively.
For all $i\in\{1,2,\ldots,(\ell+\epsilon)/2\}$, it follows that $\overline{\Delta}_{y_i}$ is a subgroup of order 2 in $\overline{\Delta}$
and the fundamental character $\theta_{y_i}$ is the unique non-trivial character of $\overline{\Delta}_{y_i}$.
In particular, the Brauer characters $\mathrm{Ind}_{\overline{\Delta}_{y_i}}^{\overline{\Delta}}(\theta_{y_i})$,
for $i\in\{1,2,\ldots,(\ell+\epsilon)/2\}$,
are all equal to the Brauer character of the projective indecomposable $k[\overline{\Delta}]$-module whose 
socle is non-trivial.  
Moreover, for $j\in\{0,1,2\}$, we have that $\ell_{y_i,j}\in\{0,1\}$ such that 
$\ell_{y_i,j}\equiv -\mbox{ord}_{y_i}(D_j) \mod (\#\overline{\Delta}_{y_i})$ is only non-zero for $(i,j)\in\{(1,0),(1,1)\}$.
Let $M=\mathrm{Res}_{\Delta}^G\,\HH^0(X,\Omega_X)$, and fix $j\in\{0,1,2\}$. Following Proposition \ref{prop:fundamental}, or step (2) of Remark 
\ref{rem:algorithm}, we obtain that the Brauer character  of the $k$-dual of 
$S_{\chi^j}\otimes_k(M^{(j+1)}/M^{(j)})$  is equal to
$$\gamma(j)=\delta_{j,2}\,\beta_0 +\left( \frac{\ell+\epsilon}{4}\right)\,
\mathrm{Ind}_{\overline{\Delta}_{y_1}}^{\overline{\Delta}}(\theta_{y_1})
-(1-\delta_{j,2})\,\mathrm{Ind}_{\overline{\Delta}_{y_1}}^{\overline{\Delta}}(\theta_{y_1}) +
n_j(\Delta)\,\beta(k[\overline{\Delta}])$$
where 
\begin{eqnarray*}
n_0(\Delta)\;=\;n_1(\Delta)&=&\frac{1}{\#\overline{\Delta}}\, \left(3^{n-1}m + g(Y)-1\right)+\frac{1}{2}\left(1-\frac{1}{2}\right)+
\frac{1}{2}\left(\frac{\ell+\epsilon}{2}-1\right)\left(-\frac{1}{2}\right)\\
&=&\frac{m+1}{2}+ \frac{m((\ell+\epsilon)(\ell-6)-8)}{24}-\frac{\ell+\epsilon}{8}
\end{eqnarray*}
and
\begin{equation}
\label{eq:n2Delta}
n_2(\Delta)=\frac{1}{\#\overline{\Delta}}\, \left(g(Y)-1\right)+\frac{1}{2}\left(\frac{\ell+\epsilon}{2}\right)\left(-\frac{1}{2}\right)\\
 =\frac{m((\ell+\epsilon)(\ell-6)-8)}{24}-\frac{\ell+\epsilon}{8}.
 \end{equation}
In particular, 
$$n_1(\Delta)=n_2(\Delta)+(m+1)/2.$$ 
Let $P(\overline{\Delta},0)$ (resp. $P(\overline{\Delta},1)$) be a
projective indecomposable $k[\overline{\Delta}]$-module with trivial (resp. non-trivial) socle. Then
$\mathrm{Ind}_{\overline{\Delta}_{y_1}}^{\overline{\Delta}}(\theta_{y_1}) =\beta(P(\overline{\Delta},1))$
and $\beta(k[\overline{\Delta}])=\beta(P(\overline{\Delta},0))+\beta(P(\overline{\Delta},1))$. 
Since $\beta_0$, $\beta(P(\overline{\Delta},0))$ and $\beta(P(\overline{\Delta},1))$ are self-dual, 
we obtain that the Brauer character of $M^{(j+1)}/M^{(j)}$ is equal to
\begin{eqnarray*}
\beta(M^{(1)}/M^{(0)}) &=& \left(n_2(\Delta)+\frac{m+1}{2}\right)\,\beta(P(\overline{\Delta},0))+\left(n_2(\Delta)+\frac{\ell+\epsilon}{4}-1+\frac{m+1}{2}\right)\,\beta(P(\overline{\Delta},1)),\\
\beta(M^{(2)}/M^{(1)})&=&\left(n_2(\Delta)+\frac{m+1}{2}\right)\,\beta(P(\overline{\Delta},1))+\left(n_2(\Delta)+\frac{\ell+\epsilon}{4}-1+\frac{m+1}{2}\right)\,\beta(P(\overline{\Delta},0)),\\
\beta(M^{(3)}/M^{(2)})
&=&\beta_0 + n_2(\Delta)\,\beta(P(\overline{\Delta},0))+\left(n_2(\Delta)+\frac{\ell+\epsilon}{4}\right)\,\beta(P(\overline{\Delta},1))\\
&=&  (n_2(\Delta)+1)\,\beta(P(\overline{\Delta},0))+\left(n_2(\Delta)+\frac{\ell+\epsilon}{4}-1\right)\,\beta(P(\overline{\Delta},1))+\beta(S_{\chi}) ,
\end{eqnarray*}
where we rewrote the Brauer character of $M^{(3)}/M^{(2)}$ to reflect the fact that, by step (2) of Remark \ref{rem:algorithm}, the quotient $M^{(3)}/M^{(2)}$ is isomorphic to
a direct sum of the simple $k[\overline{\Delta}]$-module $S_\chi$ and a projective $k[\overline{\Delta}]$-module.
As above, let $S_0^{(\Delta)},S_1^{(\Delta)}$ be representatives of the 2 isomorphism classes of simple
$k[\Delta]$-modules, such that $S_\chi\cong S_1^{(\Delta)}$.

Using the proof of Theorem \ref{thm:main}, or step (3) of Remark \ref{rem:algorithm}, it follows that
$\mathrm{Res}_\Delta^G\,\HH^0(X,\Omega_X)=\mathrm{Res}_\Delta^G\,M$ is a direct sum of $n_2(\Delta)+1$ copies of the
projective $k[\Delta]$-module with socle $S_0$ and $n_2(\Delta)+\frac{\ell+\epsilon}{4}-1$  copies of the projective 
$k[\Delta]$-module with socle $S_1$ together  
with an indecomposable $k[\Delta]$-module of $k$-dimension $2\cdot 3^{n-1}+1$ with socle $S_1^{(\Delta)}$
and $(m-1)/2$ indecomposable $k[\Delta]$-modules of $k$-dimension $2\cdot 3^{n-1}$ with socle $S_0^{(\Delta)}$
and $(m-1)/2$ indecomposable $k[\Delta]$-modules of $k$-dimension $2\cdot 3^{n-1}$ with socle $S_1^{(\Delta)}$. 
Writing $U_{a,b}^{(\Delta)}$ for an indecomposable $k[\Delta]$-module of $k$-dimension $b$
with socle isomorphic to $S_a^{(\Delta)}$, we have
\begin{eqnarray*}
\mathrm{Res}_\Delta^G\,\HH^0(X,\Omega_X)&\cong& \left(n_2(\Delta)+1\right) U_{0,3^n}^{(\Delta)} \oplus 
\left(n_2(\Delta)+\frac{\ell+\epsilon}{4}-1\right)U_{1,3^n}^{(\Delta)}\oplus\\
&& U_{1,2\cdot 3^{n-1}+1}^{(\Delta)}\oplus \left(\frac{m-1}{2}\right) U_{0,2\cdot 3^{n-1}}^{(\Delta)}\oplus
\left(\frac{m-1}{2}\right) U_{1,2\cdot 3^{n-1}}^{(\Delta)}
\end{eqnarray*}
where $n_2(\Delta)$ is as in (\ref{eq:n2Delta}).
\end{enumerate}

We now want to use (a) and (b) above to determine the $k[N_1]$-module structure of $\HH^0(X,\Omega_X)$.
Using the notation introduced in \S\ref{sss:ramidifferent}, $P=\langle v'\rangle$ is a Sylow 3-subgroup of $G$ and 
$P_1=I$ is the unique subgroup of $P$ of order 3. Hence $N_1=N_G(P)=\langle v,s\rangle$ is a dihedral group of order 
$\ell-\epsilon = 2\cdot 3^n\cdot m$. There are $2+(m-1)/2$ isomorphism classes of simple $k[N_1]$-modules.
These are represented by 2 one-dimensional $k[N_1]$-modules $S_0^{(N_1)}$ and $S_1^{(N_1)}$, which are 
the inflations of the two simple $k[\Delta]$-modules $S_0^{(\Delta)}$ and $S_1^{(\Delta)}$, together with $(m-1)/2$
two-dimensional simple $k[N_1]$-modules $\widetilde{S}_1^{(N_1)},\ldots,\widetilde{S}_{(m-1)/2}^{(N_1)}$, where
$\widetilde{S}_t^{(N_1)}=\mathrm{Ind}_V^{N_1}\,S_t^{(V)}$ for $1\le t\le (m-1)/2$. The indecomposable
$k[N_1]$-modules are uniserial, where the projective modules all have length $3^n$. For $\{i,j\}=\{0,1\}$, the projective cover
of $S_i^{(N_1)}$ has ascending composition factors 
$$S_i^{(N_1)}, S_j^{(N_1)}, S_i^{(N_1)},\ldots, S_j^{(N_1)}, S_i^{(N_1)}.$$
For $t\in\{1,\ldots,(m-1)/2\}$, the composition factors of the projective cover of $\widetilde{S}_t^{(N_1)}$ are
all isomorphic to $\widetilde{S}_t^{(N_1)}$. For $i\in\{0,1\}$, we write $U_{i,b}^{(N_1)}$ for an indecomposable 
$k[N_1]$-module of $k$-dimension $b$ whose socle is isomorphic to $S_i^{(N_1)}$. For $t\in\{1,\ldots,(m-1)/2\}$, 
we write $\widetilde{U}_{t,b}^{(N_1)}$ for  an indecomposable $k[N_1]$-module of $k$-dimension $2b$ whose socle 
is isomorphic to $\widetilde{S}_t^{(N_1)}$. By (a) and (b) above, we obtain
\begin{eqnarray}
\nonumber
\mathrm{Res}_{N_1}^G\,\HH^0(X,\Omega_X)&\cong& 
\left(\frac{(\ell+\epsilon)(\ell-9)+16}{24}\right) U_{0,3^n}^{(N_1)} \oplus \left(\frac{(\ell+\epsilon)(\ell-3)-32}{24}\right) U_{1,3^n}^{(N_1)} 
\oplus \\
\label{eq:decompN1different}
&& \bigoplus_{t=1}^{(m-1)/2} \left(\frac{(\ell+\epsilon)(\ell-6)-8}{12}\right)\widetilde{U}_{t,3^n}^{(N_1)}\oplus\\
\nonumber
&&U_{1,2\cdot 3^{n-1}+1}^{(N_1)}\oplus \bigoplus_{t=1}^{(m-1)/2} \widetilde{U}_{t,2\cdot3^{n-1}}^{(N_1)}.
\end{eqnarray}

\subsubsection{The $k[N_1]$-module structure when $\ell\equiv \epsilon \mod 4$}
\label{sss:N1equal}

We use the notation from \S\ref{sss:ramiequal}. In particular, $V=\langle v\rangle$ is a cyclic group of 
order $(\ell-\epsilon)/2 = 3^n\cdot m$, where $m$ is even, and 
$\Delta_1=\langle v',s\rangle$ and $\Delta_2=\langle v',vs\rangle$ are two non-conjugate dihedral groups of order 
$2\cdot 3^n$, where $v'=v^m$ and $s\in N_G(V)-V$ is an element of order 2. 
Moreover, let $I$ be the unique subgroup of $V$ of order 3.
Similarly to \S \ref{sss:N1different}, we use the key steps in the proof of Theorem \ref{thm:main}, as summarized in Remark \ref{rem:algorithm}, to determine 
the $k[H]$-module structure of $\HH^0(X,\Omega_X)$ for $H\in\{V,\Delta_1,\Delta_2\}$.

In all cases, it follows from \S\ref{sss:ramiequal} that the subgroup of the Sylow 3-subgroup
$P_H=\langle v'\rangle$ of $H$ generated by the Sylow 3-subgroups of the inertia groups of all closed points in $X$ is
equal to $I=\langle \tau\rangle$, where $\tau=(v')^{3^{n-1}}$. Moreover, there
are precisely $3^{n-1}\cdot m$ closed points $x$ in $X$ with $H_x\ge I$.
Let $Y=X/I$. For $1\le t\le m$, let $y_{t,1},\ldots, y_{t,3^{n-1}}\in Y$ be points 
that ramify in $X$. For $0\le j\le 2$, we obtain that $\mathcal{L}_j$ from Proposition \ref{prop:filter} is given as 
$\mathcal{L}_j=\Omega_Y(D_j)$, where $D_j$ has the same form as in
(\ref{eq:Djmodulardifferent}). Since $3^{n-1}\cdot m$
points in $Y=X/I$ ramify in $X$, the Riemann-Hurwitz theorem shows that 
$g(Y)$ satisfies the same equation as in (\ref{eq:gY}).

The ramification data is slightly more difficult than in \S \ref{sss:N1different},
but the arguments are very similar. We therefore just list the final answers
for each $H\in\{V,\Delta_1,\Delta_2\}$.

\begin{enumerate}
\item[(a)] We first consider the case $H=V$, so $H\cong (\mathbb{Z}/3^n)\times (\mathbb{Z}/m)$, where $3$ does not 
divide $m$. 
Using the notation of Remark \ref{rem:indecomposables}, there are $m$ isomorphism classes of simple
$k[V]$-modules, represented by $S_0^{(V)},S_1^{(V)},\ldots,S_{m-1}^{(V)}$, where we use the superscript
$(V)$ to indicate these are simple $k[V]$-modules. 
Moreover, the projective indecomposable $k[V]$-modules all have length $3^n$.
Writing $U_{a,b}^{(V)}$ for an indecomposable $k[V]$-module of $k$-dimension $b$
with socle isomorphic to $S_a^{(V)}$, we have
$$\mathrm{Res}_V^G\,\HH^0(X,\Omega_X)\cong n_2(V)\,k[V]\oplus \bigoplus_{t=1}^{m/2} U_{2t-1,3^n}^{(V)}\oplus\,
U_{0,2\cdot 3^{n-1}+1}^{(V)}\oplus\bigoplus_{t=1}^{m-1}U_{t,2\cdot 3^{n-1}}^{(V)}$$
where 
$$n_2(V)= \frac{(\ell+\epsilon)(\ell-6)-14}{12}.$$

\item[(b)] We next consider the case $H=\Delta_1$, so $H\cong (\mathbb{Z}/3^n)\rtimes_\chi (\mathbb{Z}/2)$.
In particular, there are precisely two isomorphism classes of simple $k[\Delta_1]$-modules, represented by 
$S_0^{(\Delta_1)}$ and $S_1^{(\Delta_1)}$, and $S_\chi\cong S_1^{(\Delta_1)}$.
Moreover, the projective indecomposable $k[\Delta_1]$-modules all have length $3^n$.
Writing $U_{a,b}^{(\Delta_1)}$ for an indecomposable $k[\Delta_1]$-module of $k$-dimension $b$
with socle isomorphic to $S_a^{(\Delta_1)}$, we have
\begin{eqnarray*}
\mathrm{Res}_{\Delta_1}^G\,\HH^0(X,\Omega_X)&\cong& \left(n_2(\Delta_1)+1\right) U_{0,3^n}^{(\Delta_1)} \oplus 
\left(n_2(\Delta_1)+\frac{\ell-\epsilon}{4}-1\right)U_{1,3^n}^{(\Delta_1)}\oplus\\
&& U_{1,2\cdot 3^{n-1}+1}^{(\Delta_1)}\oplus \left(\frac{m}{2}\right) U_{0,2\cdot 3^{n-1}}^{(\Delta_1)}\oplus
\left(\frac{m}{2}-1\right) U_{1,2\cdot 3^{n-1}}^{(\Delta_1)}
\end{eqnarray*}
where 
$$n_2(\Delta_1) =\frac{m((\ell+\epsilon)(\ell-6)-8)}{24}-\frac{\ell-\epsilon}{8}.$$

\item[(c)] Finally, we consider the case $H=\Delta_2$, so $H\cong (\mathbb{Z}/3^n)\rtimes_\chi (\mathbb{Z}/2)$.
Again, there are precisely two isomorphism classes of simple $k[\Delta_2]$-modules, represented by 
$S_0^{(\Delta_2)}$ and $S_1^{(\Delta_2)}$, and $S_\chi\cong S_1^{(\Delta_2)}$.
Moreover, the projective indecomposable $k[\Delta_2]$-modules all have length $3^n$.
Writing $U_{a,b}^{(\Delta_2)}$ for an indecomposable $k[\Delta_2]$-module of $k$-dimension $b$
with socle isomorphic to $S_a^{(\Delta_2)}$, we have
\begin{eqnarray*}
\mathrm{Res}_{\Delta_2}^G\,\HH^0(X,\Omega_X)&\cong& \left(n_2(\Delta_2)+1\right) U_{0,3^n}^{(\Delta_2)} \oplus 
\left(n_2(\Delta_2)+\frac{\ell-\epsilon}{4}-1\right)U_{1,3^n}^{(\Delta_2)}\oplus\\
&& U_{1,2\cdot 3^{n-1}+1}^{(\Delta_2)}\oplus \left(\frac{m}{2}-1\right) U_{0,2\cdot 3^{n-1}}^{(\Delta_2)}\oplus
\left(\frac{m}{2}\right) U_{1,2\cdot 3^{n-1}}^{(\Delta_2)}
\end{eqnarray*}
where 
$$n_2(\Delta_2)=\frac{m((\ell+\epsilon)(\ell-6)-8)}{24}-\frac{\ell-\epsilon}{8}.$$
\end{enumerate}

We now want to use (a), (b) and (c) above to determine the $k[N_1]$-module structure of $\HH^0(X,\Omega_X)$.
Using the notation introduced in \S\ref{sss:ramiequal}, $P=\langle v'\rangle$ is a Sylow 3-subgroup of $G$ and 
$P_1=I$ is the unique subgroup of $P$ of order 3. Hence $N_1=N_G(P)=\langle v,s\rangle$ is a dihedral group of order 
$\ell-\epsilon = 2\cdot 3^n\cdot m$, where $m$ is even. 
There are $4+(m/2-1)$ isomorphism classes of simple $k[N_1]$-modules.
These are represented by 4 one-dimensional $k[N_1]$-modules $S_{0,0}^{(N_1)}$, $S_{0,1}^{(N_1)}$, $S_{1,0}^{(N_1)}$
and $S_{1,1}^{(N_1)}$ such that $S_{i_1,i_2}^{(N_1)}$ restricts to $S_{i_1}^{(\Delta_1)}$ and to $S_{i_2}^{(\Delta_2)}$
for $i_1,i_2\in\{0,1\}$, together with $(m/2-1)$
two-dimensional simple $k[N_1]$-modules $\widetilde{S}_1^{(N_1)},\ldots,\widetilde{S}_{(m/2-1)}^{(N_1)}$, where
$\widetilde{S}_t^{(N_1)}=\mathrm{Ind}_V^{N_1}\,S_t^{(V)}$ for $1\le t\le (m/2-1)$. The indecomposable
$k[N_1]$-modules are uniserial, where the projective modules all have length $3^n$. If $\{i,j\}=\{0,1\}$ then the projective cover
of $S_{i,i}^{(N_1)}$ has ascending composition factors 
$$S_{i,i}^{(N_1)}, S_{j,j}^{(N_1)}, S_{i,i}^{(N_1)},\ldots, S_{j,j}^{(N_1)}, S_{i,i}^{(N_1)}$$
and the projective cover of $S_{i,j}^{(N_1)}$ has ascending composition factors 
$$S_{i,j}^{(N_1)}, S_{j,i}^{(N_1)}, S_{i,j}^{(N_1)},\ldots, S_{j,i}^{(N_1)}, S_{i,j}^{(N_1)}.$$
For $t\in\{1,\ldots,(m/2-1)\}$, the composition factors of the projective cover of $\widetilde{S}_t^{(N_1)}$ are
all isomorphic to $\widetilde{S}_t^{(N_1)}$. For $i_1,i_2\in\{0,1\}$, we write $U_{i_1,i_2,b}^{(N_1)}$ for an indecomposable 
$k[N_1]$-module of $k$-dimension $b$ whose socle is isomorphic to $S_{i_1,i_2}^{(N_1)}$. For $t\in\{1,\ldots,(m/2-1)\}$, 
we write $\widetilde{U}_{t,b}^{(N_1)}$ for  an indecomposable $k[N_1]$-module of $k$-dimension $2b$ whose socle 
is isomorphic to $\widetilde{S}_t^{(N_1)}$. By (a), (b) and (c) above, we obtain 
\begin{eqnarray}
\nonumber
\mathrm{Res}_{N_1}^G\,\HH^0(X,\Omega_X)&\cong& 
\left(\frac{(\ell+\epsilon)(\ell-6)-14}{24}-\frac{\ell-\epsilon}{8}+1\right) U_{0,0,3^n}^{(N_1)} \oplus  \left\lfloor\frac{(\ell+\epsilon)(\ell-6)-2}{24}\right\rfloor U_{0,1,3^n}^{(N_1)} \oplus\\
\label{eq:decompN1equal}
&& \left\lfloor\frac{(\ell+\epsilon)(\ell-6)-2}{24}\right\rfloor U_{1,0,3^n}^{(N_1)} \oplus
\left(\frac{(\ell+\epsilon)(\ell-6)-14}{24}+\frac{\ell-\epsilon}{8}-1\right) U_{1,1,3^n}^{(N_1)} \oplus\\
\nonumber
&&\bigoplus_{t=1}^{\lfloor (m-2)/4\rfloor} \left(\frac{(\ell+\epsilon)(\ell-6)-14}{12}\right) \widetilde{U}_{2t,3^n}^{(N_1)}\oplus
\bigoplus_{t=1}^{\lfloor m/4\rfloor} \left(\frac{(\ell+\epsilon)(\ell-6)-2}{12}\right) \widetilde{U}_{2t-1,3^n}^{(N_1)}\oplus\\
\nonumber
&&U_{1,1,2\cdot 3^{n-1}+1}^{(N_1)}\oplus U_{0,1,2\cdot 3^{n-1}}^{(N_1)}\oplus
\bigoplus_{t=1}^{m/2-1} \widetilde{U}_{t,2\cdot3^{n-1}}^{(N_1)}
\end{eqnarray}
where, as before, $\lfloor r\rfloor$ denotes the largest integer that is less than or equal to a given rational number $r$.

\subsection{The Brauer character of $\HH^0(X,\Omega_X)$ as a $k[G]$-module}
\label{ss:brauer}

In this section, we compute the values of the Brauer character of $\HH^0(X,\Omega_X)$ as a $k[G]$-module.
We use the notation from the previous two sections, \S\ref{ss:rami} and \S\ref{ss:N1}. 
We determine the values of the Brauer character $\beta(\HH^0(X,\Omega_X))$ for all elements
$g\in G$ that are 3-regular, i.e. whose order is not divisible by 3. By \cite[\S II.8]{Hu:67}, the elements of order $\ell$ 
fall into 2 conjugacy classes. Let $r_1$ and $r_2$ be representatives of these conjugacy classes. Since all subgroups
of $G$ of order $\ell$ are conjugate, we can assume, without loss of generality, that 
$R=\langle r_1\rangle=\langle r_2\rangle$. In fact, if $1\le \mu\le \ell-1$ is such that $\mathbb{F}_\ell^*=\langle \mu\rangle$
then we can choose $r_2=r_1^{\mu}$. Moreover, for $i\in\{1,2\}$ and $1\le a\le (\ell-1)/2$, we have that
$(r_i)^{a^2}$ is conjugate to $r_i$. All elements $g\in G$ of a given order $\neq \ell$ lie in a single conjugacy class.
We first determine the value of the Brauer character $\beta(\HH^0(X,\Omega_X))$ at $r_1$ and $r_2$.

\subsubsection{The Brauer character of $\HH^0(X,\Omega_X)$ at elements of order $\ell$}
\label{sss:brauerell}

By \S\ref{sss:ramidifferent} and \S\ref{sss:ramiequal}, we have either $R_x=R$ or $R_x=\{e\}$ for all closed points 
$x\in X$, and there are precisely $(\ell-1)/2$ closed points $x$ in $X$ with $R_x=R$. In particular, this means that
$X\to X/R$ is tamely ramified. Letting $Y=X$ and $Z=X/R$, we have $g(Y)-1=g(X)-1$ as in (\ref{eq:genus}). 

There are precisely $(\ell-1)/2$ points in $Z$ that ramify in $Y=X$. Moreover, the inertia group of each of the $(\ell-1)/2$ points
in $Y=X$ lying above these points in $Z$ is equal to $R$. Let $z_1, \ldots,z_{(\ell-1)/2}\in Z$ be the points in $Z$ that 
ramify in $Y=X$ with inertia group equal to $R$. Let $y_1,\ldots,y_{(\ell-1)/2}$ be points lying above $z_1, \ldots,z_{(\ell-1)/2}$, 
respectively.  Following Proposition \ref{prop:fundamental}, or step (2) of Remark 
\ref{rem:algorithm}, we obtain that the Brauer character  of the $k$-dual of $\mathrm{Res}_R^G\,\HH^0(X,\Omega_X)$  is equal to
$$\beta_0 + \sum_{i=1}^{(\ell-1)/2}\;\sum_{t=0}^{\ell-1}\;\frac{t}{\ell}\,(\theta_{y_i})^t+n_0(R)\,\beta(k[R])$$
where 
$$n_0(R)\;=\;\frac{1}{\#R}\, \left(g(X)-1\right)+\frac{\ell-1}{2\ell}\left(-\frac{\ell-1}{2}\right)=\frac{(\ell-1)(\ell-11)}{24}.$$
Suppose $\theta_{y_1}(r_1)=\xi_\ell$ is a primitive $\ell^{\mathrm{th}}$ root of unity. Then it follows that
$$\{\theta_{y_i}(r_1)\;;\;1\le i\le (\ell-1)/2\} = \{(\xi_\ell)^{a^2}\;;\; 1\le a\le (\ell-1)/2\}.$$
Hence
\begin{equation}
\label{eq:ellsum}
\sum_{i=1}^{(\ell-1)/2}\;\sum_{t=0}^{\ell-1}\;\frac{t}{\ell}\,(\theta_{y_i})^t(r_1)=
\sum_{a=1}^{(\ell-1)/2} \;\frac{1}{\ell}\sum_{t=0}^{\ell-1}t\,(\xi_\ell)^{a^2t}=
\sum_{a=1}^{(\ell-1)/2}\,\frac{1}{(\xi_\ell)^{a^2}-1}.
\end{equation}
\begin{enumerate}
\item[(a)] If $\ell\equiv 1\mod 4$ then $-1$ is a square mod $\ell$. Since
$$\frac{1}{(\xi_\ell)^{a^2}-1}+\frac{1}{(\xi_\ell)^{-a^2}-1}=
\frac{(\xi_\ell)^{-a^2}-1+(\xi_\ell)^{a^2}-1}{((\xi_\ell)^{a^2}-1)((\xi_\ell)^{-a^2}-1)}=-1$$
(\ref{eq:ellsum}) becomes
$$\sum_{i=1}^{(\ell-1)/2}\;\sum_{t=0}^{\ell-1}\;\frac{t}{\ell}\,(\theta_{y_i})^t(r_1)=-\,\frac{\ell-1}{4}.$$
Therefore, since $\theta_{y_i}(r_2) = \theta_{y_i}(r_1^{\mu})$, we get
\begin{equation}
\label{eq:ell1}
\beta(\HH^0(X,\Omega_X))(r_1) =1 -\frac{\ell-1}{4}= \beta(\HH^0(X,\Omega_X))(r_2).
\end{equation}

\item[(b)] Next suppose $\ell\equiv -1 \mod 4$. Using Gauss sums, we see that there exists a choice of
square root of $-\ell$, say $\sqrt{-\ell}$, such that 
\begin{equation}
\label{eq:choicesquareroot}
\sum_{a=1}^{(\ell-1)/2} (\xi_\ell)^{a^2} = \frac{-1+\sqrt{-\ell}}{2}\quad\mbox{ and }\quad 
\sum_{a=1}^{(\ell-1)/2} (\xi_\ell)^{\mu a^2} =\frac{-1- \sqrt{-\ell}}{2}.
\end{equation}
Letting $\Box_\ell\subset \{1,\ldots,\ell-1\}$ be the set of squares in $\mathbb{F}_\ell^*$, it follows that
$\{\ell-t\;;\;t\in \Box_\ell\}$ is the set of non-squares in $\mathbb{F}_\ell^*$,  since $-1$ is not a square  mod $\ell$.
Then (\ref{eq:ellsum}) can be rewritten as
\begin{eqnarray*}
\frac{1}{\ell}\;\sum_{t=0}^{\ell-1}\;\sum_{a=1}^{(\ell-1)/2} t\,(\xi_\ell)^{a^2t}&=&
\frac{1}{\ell}\;\sum_{t\in \Box_\ell}t\left( \frac{-1+\sqrt{-\ell}}{2} \right)\;+\;\frac{1}{\ell}\;\sum_{t\in \Box_\ell}(\ell-t)\left( \frac{-1-\sqrt{-\ell}}{2}\right)\\
&=&\frac{\sqrt{-\ell}}{\ell}\;\sum_{t\in \Box_\ell}t \;-\; \frac{\ell-1}{4}\left(1+\sqrt{-\ell}\right).
\end{eqnarray*}
Let $h_\ell=h_{\mathbb{Q}(\sqrt{-\ell})}$ be the class number of $\mathbb{Q}(\sqrt{-\ell})$, and let $\chi$ be the quadratic
character mod $\ell$. By \cite[Ex. 4.5]{washington}, we have
$$\ell\,h_\ell=-2\sum_{a=1}^{(\ell-1)/2}\chi(a)\,a + \ell\sum_{a=1}^{(\ell-1)/2}\chi(a)=-\sum_{a=1}^{\ell-1}\chi(a)\,a$$
which implies
$$\frac{1}{\ell}\,\sum_{t\in \Box_\ell}t = \frac{\ell-1}{4}-\frac{h_\ell}{2}.$$
Therefore, (\ref{eq:ellsum}) becomes
$$\frac{1}{\ell}\;\sum_{t=0}^{\ell-1}\;\sum_{a=1}^{(\ell-1)/2} t\,(\xi_\ell)^{a^2t}
=-\,\frac{\ell-1}{4}-\,\frac{h_\ell}{2}\sqrt{-\ell}.$$
Using $\theta_{y_i}(r_2) = \theta_{y_i}(r_1^{\mu})$ and (\ref{eq:choicesquareroot}), we get
\begin{eqnarray}
\label{eq:ell31}
\beta(\HH^0(X,\Omega_X))(r_1) &=&1-\,\frac{\ell-1}{4}-\,\frac{h_\ell}{2}\sqrt{-\ell} ;\\
\label{eq:ell32}
\beta(\HH^0(X,\Omega_X))(r_2)&=&1-\,\frac{\ell-1}{4}+\,\frac{h_\ell}{2}\sqrt{-\ell} .
\end{eqnarray}
\end{enumerate}

\subsubsection{The Brauer character of $\HH^0(X,\Omega_X)$ when $\ell\equiv -\epsilon\mod 4$}
\label{sss:brauerdifferent}

We use the notation from \S\ref{sss:ramidifferent}. In particular, $v$ is an element of order $(\ell-\epsilon)/2 = 3^n\cdot m$, 
where $m$ is odd, $s$ is an element of order 2, and $w$ is an element of order $(\ell+\epsilon)/2$.
Let $v''=v^{3^n}$ be of order $m$. Then a full set of representatives for the conjugacy classes of 3-regular elements of $G$ is given by
$$\{e,r_1,r_2,s, (v'')^i,w^j\}$$
where $1\le i\le (m-1)/2$ and $1\le j < (\ell+\epsilon)/4$.

From \S\ref{sss:brauerell}, we know the values of $\beta(\HH^0(X,\Omega_X))$ at $r_1$ and $r_2$.
The other values of $\beta(\HH^0(X,\Omega_X))$ are as follows:
\begin{eqnarray}
\label{eq:Br1}
\beta(\HH^0(X,\Omega_X))(e)&=&1+\frac{(\ell^2-1)(\ell-6)}{24},\\
\label{eq:Br2}
\beta(\HH^0(X,\Omega_X))(s)&=& 1-\frac{\ell+\epsilon}{4},\\
\label{eq:Br3}
\beta(\HH^0(X,\Omega_X))((v'')^i)&=&1,\\
\label{eq:Br4}
\beta(\HH^0(X,\Omega_X))(w^j) &=& 1.
\end{eqnarray}
when $(v'')^i\neq e$ and $w^j\not\in\{e,s\}$. Note that we obtain the values in (\ref{eq:Br1}) - (\ref{eq:Br3}) from
\S\ref{sss:N1different}.

We next consider the case $W = \langle w\rangle$. By \S\ref{sss:ramidifferent}, 
we have either $W_x\cong\mathbb{Z}/2$ or $W_x=\{e\}$ for all closed points $x\in X$, and there are
precisely $(\ell+\epsilon)/2$ closed points $x$ in $X$ with $W_x\cong \mathbb{Z}/2$. In particular, this means that
$X\to X/W$ is tamely ramified. Letting $Y=X$ and $Z=X/W$, we have $g(Y)-1=g(X)-1$ as in (\ref{eq:genus}). 

There are precisely 2 points in $Z$ that ramify in $Y=X$. Moreover, the inertia group of each of the $(\ell+\epsilon)/2$ points
in $Y=X$ lying above these points in $Z$ is isomorphic to $\mathbb{Z}/2$. Let $z_1, z_2\in Z$ be the points in $Z$ that 
ramify in $Y=X$ with inertia group isomorphic to $\mathbb{Z}/2$. Let $y_1,y_2$ be points lying above $z_1,z_2$, 
respectively. Since $W$ has a unique subgroup of order 2, it follows that $W_{y_1}=W_{y_2}$
and the fundamental character $\theta_{y_1}=\theta_{y_2}$ is the unique non-trivial character of $W_{y_1}
=W_{y_2}$. Following Proposition \ref{prop:fundamental}, or step (2) of Remark 
\ref{rem:algorithm}, we obtain that the Brauer character  of the $k$-dual of $\mathrm{Res}_W^G\,\HH^0(X,\Omega_X)$  is equal to
$$\beta_0 + \mathrm{Ind}_{W_{y_1}}^{W}(\theta_{y_1})+n_0(W)\,\beta(k[W])$$
where 
$$n_0(W)\;=\;\frac{1}{\#W}\, \left(g(Y)-1\right)-\frac{1}{2}=\frac{(\ell-\epsilon)(\ell-6)-6}{12}.$$
Note that $\beta_0$, $\mathrm{Ind}_{W_{y_1}}^{W}(\theta_{y_1})$ 
and $\beta(k[W])$ are self-dual.
Since $(\ell+\epsilon)/2$ is not divisible by 3, $k[W]$ is semisimple. 
There are $(\ell+\epsilon)/2$ isomorphism classes of simple $k[W]$-modules, represented by 
$S_0^{(W)},S_1^{(W)},\ldots,S_{(\ell+\epsilon)/2-1}^{(W)}$, where we use the superscript
$(W)$ to indicate these are simple $k[W]$-modules. We obtain
$$\beta(\mathrm{Res}_W^G\,\HH^0(X,\Omega_X))=\beta(S_0^{(W)}) + \sum_{t=1}^{(\ell+\epsilon)/4}\beta(S_{2t-1}^{(W)}) + n_0(W)\,\beta(k[W]).$$
This gives the values of $\beta(\HH^0(X,\Omega_X))$ in  (\ref{eq:Br4}).

\subsubsection{The Brauer character of $\HH^0(X,\Omega_X)$ when $\ell\equiv \epsilon\mod 4$}
\label{sss:brauerequal}

We use the notation from \S\ref{sss:ramiequal}. In particular, $v$ is an element of order $(\ell-\epsilon)/2 = 3^n\cdot m$, 
where $m$ is even, $s$ is an element of order 2, and $w$ is an element of order $(\ell+\epsilon)/2$.
Let $v''=v^{3^n}$ be of order $m$. Then a full set of representatives for the conjugacy classes of 3-regular elements of $G$ is given by
$$\{e,r_1,r_2,s, (v'')^i,w^j\}$$
where $1\le i<m/2$ and $1\le j\le \lfloor(\ell+\epsilon)/4\rfloor$.

From \S\ref{sss:brauerell}, we know the values of $\beta(\HH^0(X,\Omega_X))$ at $r_1$ and $r_2$.
The other values of $\beta(\HH^0(X,\Omega_X))$ are as follows:
\begin{eqnarray}
\label{eq:Br5}
\beta(\HH^0(X,\Omega_X))(e)&=&1+\frac{(\ell^2-1)(\ell-6)}{24},\\
\label{eq:Br6}
\beta(\HH^0(X,\Omega_X))(s)&=& 1-\frac{\ell-\epsilon}{4},\\
\label{eq:Br7}
\beta(\HH^0(X,\Omega_X))((v'')^i)&=&1,\\
\label{eq:Br8}
\beta(\HH^0(X,\Omega_X))(w^j) &=& 1.
\end{eqnarray}
when $(v'')^i\not\in\{e,s\}$ and $w^j\neq e$. Note that we obtain the values in (\ref{eq:Br5}) - (\ref{eq:Br7}) from
\S\ref{sss:N1equal}. Since the order of $W$ is not divisible by any divisor of $6\ell$, we also obtain the values of 
$\beta(\HH^0(X,\Omega_X))$ in  (\ref{eq:Br8}).

\subsection{The $k[G]$-module structure of $\HH^0(X,\Omega_X)$}
\label{ss:fullmodular}

In this section, we determine the $k[G]$-module structure of $\HH^0(X,\Omega_X)$, using \S\ref{ss:rami} - \S\ref{ss:brauer}
together with \cite{Burkhardt}. We have to consider 4 cases.

\subsubsection{The $k[G]$-module structure of $\HH^0(X,\Omega_X)$ when $\ell\equiv 1\mod 4$ and $\ell\equiv -1\mod 3$}
\label{sss:fulldifferent1}
This is the case when $\epsilon=-1$ and $\ell\equiv -\epsilon\mod 4$. 
By (\ref{eq:decompN1different}), the non-projective indecomposable direct summands of 
$\mathrm{Res}_{N_1}^G\,\HH^0(X,\Omega_X)$ are given by
\begin{equation}
\label{eq:fulldiff1}
U_{1,2\cdot 3^{n-1}+1}^{(N_1)}\oplus \bigoplus_{t=1}^{(m-1)/2} \widetilde{U}_{t,2\cdot3^{n-1}}^{(N_1)}.
\end{equation}

We first determine the Green correspondents of these summands,
using the information in \cite[\S IV]{Burkhardt}. There are
$1+(m-1)/2$ blocks of $k[G]$ of maximal defect $n$, consisting of the principal block $B_0$ and
$(m-1)/2$ blocks $B_1,\ldots,B_{(m-1)/2}$, and there are $1+(\ell-1)/4$ blocks of $k[G]$ of defect 0. 
There are precisely two isomorphism classes of simple $k[G]$-modules that belong to $B_0$, represented by
the trivial simple $k[G]$-module $T_0$ and a simple $k[G]$-module $\widetilde{T}_0$ of $k$-dimension $\ell-1$.
For each $t\in\{1,\ldots,(m-1)/2\}$, there is precisely one isomorphism class of simple $k[G]$-modules
belonging to $B_t$, represented by a simple $k[G]$-module $\widetilde{T}_t$ of $k$-dimension $\ell-1$.
Note that the Brauer character of  $\widetilde{T}_t$, $0\le t\le (m-1)/2$, is 
the restriction to the 3-regular classes of the ordinary irreducible character $\widetilde{\delta}_t^*$,
$0\le t\le (m-1)/2$, with the following values:
\begin{equation}
\label{eq:deltavals1}
\widetilde{\delta}_t^*(e)=\ell-1;\quad \widetilde{\delta}_t^*(r_1)=-1= \widetilde{\delta}_t^*(r_2); \quad  
\widetilde{\delta}_t^*(s)=0=\widetilde{\delta}_t^*(w^j); \quad \widetilde{\delta}_t^*((v'')^i)=-((\xi_m)^{ti}+(\xi_m)^{-ti})
\end{equation}
where $\xi_m$ is a fixed primitive $m^{\mathrm{th}}$ root of unity.

To determine the Green correspondents of the non-projective indecomposable direct summands of 
$\mathrm{Res}_{N_1}^G\,\HH^0(X,\Omega_X)$, we use that there is a stable equivalence between 
the module categories of $k[G]$ and $k[N_1]$. This allows us to use the results from \cite[\S X.1]{ARS} 
on almost split sequences to be able to detect the Green correspondents. 
If $n=1$ then $U_{1,2\cdot 3^{n-1}+1}^{(N_1)}=U_{1,3^n}^{(N_1)}$ is a projective $k[N_1]$-module.
If $n>1$ then the Green correspondent of $U_{1,2\cdot 3^{n-1}+1}^{(N_1)}$ belongs to $B_0$. 
Since the Green correspondent of $S_0^{(N_1)}$ is $T_0$, it follows that the Green correspondent of  $S_1^{(N_1)}$
is a uniserial $k[G]$-module of length $(3^n-1)/2$ whose composition factors are all isomorphic to $\widetilde{T}_0$.
We now follow the irreducible homomorphisms in the stable Auslander-Reiten quiver of $B_0$ starting with the
Green correspondent of $S_1^{(N_1)}$ to arrive, after $2\cdot 3^{n-1}$ such morphisms, at a uniserial $k[G]$-module
of length $(3^{n-1}-1)/2$ whose composition factors are all isomorphic to $\widetilde{T}_0$. This must be the
Green correspondent of $U_{1,2\cdot 3^{n-1}+1}^{(N_1)}$.  For $n\ge 1$ and
$1\le t\le (m-1)/2$, the Green correspondent of 
$\widetilde{U}_{t,2\cdot3^{n-1}}^{(N_1)}$ belongs to the block $B_t$. Since $\ell-1\equiv -2\mod 3^n$, it follows that the 
Green correspondent of $\widetilde{U}_{t,2\cdot3^{n-1}}^{(N_1)}$ is a uniserial $k[G]$-module of length $3^{n-1}$ whose 
composition factors are all isomorphic to $\widetilde{T}_t$.

Next, we determine the Brauer character $\widetilde{\beta}$ of the largest projective direct summand of $\HH^0(X,\Omega_X)$.
Since $(3^{n-1}-1)/2=0$ when $n=1$, we do not need to distinguish between the cases $n=1$ and $n>1$. Using 
(\ref{eq:ell1}), (\ref{eq:Br1}) - (\ref{eq:Br4}) and (\ref{eq:deltavals1}), we obtain
\begin{eqnarray*}
\widetilde{\beta}(e) &=&1+\frac{(\ell-1)(\ell^2-7\ell+4)}{24};\\
\widetilde{\beta}(r_i)&=&1-\frac{\ell+1}{6}\qquad (i=1,2);\\
\widetilde{\beta}(s)&=&1-\frac{\ell-1}{4};\\
\widetilde{\beta}(w^j)&=&1\qquad (w^j\not\in\{e,s\});\\
\widetilde{\beta} ((v'')^i)&=&0\qquad((v'')^i\neq e).
\end{eqnarray*}
Let $\Psi_0$ be the Brauer character of the projective $k[G]$-module cover $P(G,T_0)$ of $T_0$, and let
$\widetilde{\Psi}_t$ be the Brauer character of the projective $k[G]$-module cover $P(G,\widetilde{T}_t)$ of 
$\widetilde{T}_t$, $0\le t\le (m-1)/2$.
We have $1+(\ell-1)/4$ additional Brauer characters of projective indecomposable $k[G]$-modules that are also irreducible:
$\gamma_1,\gamma_2$ and $(\ell-5)/4$ characters $\eta^G$ that are constructed from characters $\eta$ of $W$ with values
$$\begin{array}{c|c|c|c|c|c|c}
&e&r_1&r_2&s&w^j&(v'')^i\\
&   &    &      &  & (w^j\not\in\{e,s\})&((v'')^i\neq e)\\\hline
\gamma_1&\frac{\ell+1}{2}&\frac{1+\sqrt{\ell}}{2}& \frac{1-\sqrt{\ell}}{2}&(-1)^{(\ell-1)/4}&(-1)^j&0\\
\gamma_2&\frac{\ell+1}{2}&\frac{1-\sqrt{\ell}}{2}& \frac{1+\sqrt{\ell}}{2}&(-1)^{(\ell-1)/4}&(-1)^j&0\\
\eta^G&\ell+1&1&1&\eta(s)+\overline{\eta}(s)&\eta(w^j)+\overline{\eta}(w^j)&0
\end{array}$$
where $\eta$ ranges over the characters of $W$ that are not equal to their conjugate $\overline{\eta}$.
Denote the corresponding projective indecomposable $k[G]$-modules by $P(G,\gamma_1)$, $P(G,\gamma_2)$
and $P(G,\eta^G)$, respectively.

If $\Phi_E$ is the Brauer character of the projective $k[G]$-module cover of the simple $k[G]$-module $E$ and
$\phi_{E'}$ is the Brauer character of the simple $k[G]$-module $E'$,  then
$$\langle \Phi_E,\phi_{E'}\rangle =\frac{1}{\#G}\sum_{x\in G_3'}\Phi_E(x^{-1})\phi_{E'}(x)$$
is the Kronecker symbol $\delta_{E,E'}$, where $G_3'$ denotes the $3$-regular elements of $G$.
Since 
$$\Phi_E=\sum_{E'} C_{E',E}\; \phi_{E'}$$
where $C_{E',E}$ denotes the $(E',E)^{\mathrm{th}}$ entry of the Cartan matrix and $E'$ ranges over the simple 
$k[G]$-modules, we can find the multiplicities of $\Phi_E$ in $\widetilde{\beta}$ by computing 
$\langle \Phi_E,\widetilde{\beta}\rangle$ for all simple $k[G]$-modules $E$. For $\Phi_E$ belonging to blocks of maximal defect, we obtain:
\begin{eqnarray*}
\langle \Psi_0,\widetilde{\beta}\rangle&=&\frac{\ell-5}{12};\\
\langle \widetilde{\Psi}_0,\widetilde{\beta}\rangle&=&\frac{(\ell-5)(3^n+1)}{24};\\
\langle \widetilde{\Psi}_t,\widetilde{\beta}\rangle&=&\frac{(\ell-5)3^n}{12}\qquad(1\le t\le (m-1)/2).
\end{eqnarray*}
For $\Phi_E$ belonging to blocks of defect 0, we get:
\begin{eqnarray}
\label{eq:gamma4.4.1}
\langle \gamma_i,\widetilde{\beta}\rangle&=&\left\{\begin{array}{c@{\quad:\quad}l}\frac{\ell-17}{24}&\ell\equiv 1\mod 8\\[1ex]\frac{\ell-5}{24}&\ell\equiv 5\mod 8\end{array}\right. \qquad(i=1,2);
\\
\label{eq:eta4.4.1}
\langle \eta^G,\widetilde{\beta}\rangle&=&\left\{\begin{array}{c@{\quad:\quad}l}\frac{\ell-5}{12}&\eta(s)=-1
\\[1ex]\frac{\ell-17}{12}&\eta(s)=1.\end{array}\right. 
\end{eqnarray}
The Cartan matrix has the following form (see \cite[\S IV]{Burkhardt}):
$$\left(\begin{array}{ccccccccc}
2&1\\1&\frac{3^n+1}{2}\\&&3^n\\&&&\ddots\\&&&&&3^n\\&&&&&&1\\&&&&&&&\ddots\\&&&&&&&&1\end{array}\right)$$
where the $2\times 2$ block in the top left corner corresponds to the principal block $B_0$, the diagonal entries $3^n$ correspond
to the blocks $B_1,\ldots,B_{(m-1)/2}$, and the remaining diagonal entries 1 correspond to the $1+(\ell-1)/4$ additional blocks of
defect 0. This implies that
$$\widetilde{\beta} =  \sum_{t=0}^{(m-1)/2} \frac{\ell-5}{12}\,\widetilde{\Psi}_t+\langle \gamma_1,\widetilde{\beta}\rangle\,\gamma_1
+\langle \gamma_2,\widetilde{\beta}\rangle\,\gamma_2 + \sum_{\eta}\langle \eta^G,\widetilde{\beta}\rangle\,\eta^G.$$
Therefore, we have proved the following result:
\begin{subprop}
\label{prop:fulldifferent1}
When $\ell\equiv 1\mod 4$ and $\ell\equiv -1\mod 3$,
let $U_{\widetilde{T}_0,(3^{n-1}-1)/2}^{(G)}$ $($resp. $U_{\widetilde{T}_t,3^{n-1}}^{(G)}$$)$ denote the uniserial
$k[G]$-module of length $(3^{n-1}-1)/2$ $($resp. $3^{n-1}$$)$ with composition factors all isomorphic to $\widetilde{T}_0$
$($resp. $\widetilde{T}_t$$)$. In particular, if  $n=1$ then $U_{\widetilde{T}_0,(3^{n-1}-1)/2}^{(G)}=0$. 
As a $k[G]$-module,
\begin{eqnarray*}
\HH^0(X,\Omega_X)&\cong&\bigoplus_{t=0}^{(m-1)/2}\frac{\ell-5}{12}\,P(G,\widetilde{T}_t)\oplus 
\langle \gamma_1,\widetilde{\beta}\rangle\,P(G,\gamma_1)\oplus \langle \gamma_2,\widetilde{\beta}\rangle\,P(G,\gamma_2)
 \oplus \\
&&\bigoplus_{\eta}\langle \eta^G,\widetilde{\beta}\rangle\,P(G,\eta^G)\oplus
U_{\widetilde{T}_0,(3^{n-1}-1)/2}^{(G)} \oplus \bigoplus_{t=1}^{(m-1)/2}U_{\widetilde{T}_t,3^{n-1}}^{(G)}
\end{eqnarray*}
where $\langle \gamma_i,\widetilde{\beta}\rangle$ and $\langle \eta^G,\widetilde{\beta}\rangle$ are as in $(\ref{eq:gamma4.4.1})$ and $(\ref{eq:eta4.4.1})$.
\end{subprop}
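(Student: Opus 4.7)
The plan is to combine the restriction computation from Section 5.2 with Green correspondence and a Brauer-character/Cartan-matrix bookkeeping to pin down the full $k[G]$-module structure of $M = \HH^0(X,\Omega_X)$.

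First, I would isolate the non-projective part of $\mathrm{Res}_{N_1}^G M$. By (\ref{eq:decompN1different}), every summand whose $k[N_1]$-dimension is divisible by $3^n$ is projective, so the only non-projective indecomposable summands are the one copy of $U_{1,2\cdot 3^{n-1}+1}^{(N_1)}$ and one copy of each $\widetilde{U}_{t,2\cdot 3^{n-1}}^{(N_1)}$ for $1\le t\le (m-1)/2$, as displayed in (\ref{eq:fulldiff1}). Since $M$ and $\mathrm{Res}_{N_1}^G M$ agree up to projectives (by the Green correspondence applied block by block), the non-projective direct summands of $M$ are precisely the Green correspondents of the modules in (\ref{eq:fulldiff1}).

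Next I would identify these Green correspondents using the block structure and Brauer trees of $k[G]$ described in \cite[\S IV]{Burkhardt}. The principal block $B_0$ contains $T_0$ and $\widetilde{T}_0$; the blocks $B_t$ for $1\le t\le (m-1)/2$ each contain a single simple module $\widetilde{T}_t$; the remaining blocks have defect $0$. The Green correspondent of $S_1^{(N_1)}$ lies in $B_0$ and must be the uniserial $k[G]$-module of length $(3^n-1)/2$ with all composition factors $\widetilde{T}_0$, since its reduction mod the socle of $P(G,T_0)$ matches the Brauer tree. Walking $2\cdot 3^{n-1}$ steps along the stable Auslander-Reiten quiver of $B_0$ (using \cite[\S X.1]{ARS}, since the stable module categories of $k[G]$ and $k[N_1]$ are equivalent) from the Green correspondent of $S_1^{(N_1)}$ lands on the uniserial module of length $(3^{n-1}-1)/2$, which is therefore the Green correspondent of $U_{1,2\cdot 3^{n-1}+1}^{(N_1)}$ (and reduces to $0$ when $n=1$). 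Similarly, for each $1\le t\le (m-1)/2$, the Green correspondent of $\widetilde{U}_{t,2\cdot 3^{n-1}}^{(N_1)}$ lies in $B_t$ and, using that $\ell-1\equiv -2\mod 3^n$ together with the Brauer tree of $B_t$, must be $U_{\widetilde{T}_t,3^{n-1}}^{(G)}$. This step is where the main work lies: one has to correctly relate walks in the Auslander-Reiten quiver to changes in composition length and choose the correct endpoint on the Brauer tree.

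Finally, to determine the projective part, I would compute the Brauer character $\widetilde{\beta}$ of the projective summand of $M$ by subtracting the Brauer characters of the identified non-projective summands from $\beta(\HH^0(X,\Omega_X))$, whose values on $3$-regular classes are recorded in \S\ref{sss:brauerell} and \S\ref{sss:brauerdifferent}, together with the explicit values (\ref{eq:deltavals1}) of the $\widetilde{\delta}_t^*$. Then I would read off the multiplicities of each projective indecomposable $P(G,E)$ in $M$ via the standard pairing $\langle \Phi_E,\widetilde{\beta}\rangle$ of its Brauer character against $\widetilde{\beta}$, using the explicit description of the irreducible projective characters $\gamma_1,\gamma_2,\eta^G$ and the Cartan matrix from \cite[\S IV]{Burkhardt}. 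The block-diagonal form of the Cartan matrix makes the inversion transparent: for the $2\times 2$ principal-block part and the $1\times 1$ blocks $B_t$ of maximal defect the multiplicity of $\widetilde{\Psi}_t$ comes out to $(\ell-5)/12$, and for the defect-zero blocks the multiplicities are $\langle \gamma_i,\widetilde{\beta}\rangle$ and $\langle \eta^G,\widetilde{\beta}\rangle$ as in (\ref{eq:gamma4.4.1}) and (\ref{eq:eta4.4.1}). Assembling these pieces with the non-projective part yields the stated decomposition.
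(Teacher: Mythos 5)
Your proposal is correct and follows essentially the same route as the paper's own argument in \S\ref{sss:fulldifferent1}: read off the non-projective summands of $\mathrm{Res}_{N_1}^G\,\HH^0(X,\Omega_X)$ from (\ref{eq:decompN1different}), identify their Green correspondents via the block data of \cite[\S IV]{Burkhardt} and the walk in the stable Auslander--Reiten quiver of the principal block, and then recover the projective part by pairing the residual Brauer character $\widetilde{\beta}$ against the projective indecomposable characters using the block-diagonal Cartan matrix. The only cosmetic difference is your divisibility-by-$3^n$ criterion for projectivity of the $N_1$-summands, which is a valid shortcut here since the indecomposable $k[N_1]$-modules are uniserial of length at most $3^n$.
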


\subsubsection{The $k[G]$-module structure of $\HH^0(X,\Omega_X)$ when $\ell\equiv -1\mod 4$ and $\ell\equiv 1\mod 3$}
\label{sss:fulldifferent2}
This is the case when $\epsilon=1$ and $\ell\equiv -\epsilon\mod 4$. 
By (\ref{eq:decompN1different}), the non-projective indecomposable direct summands of 
$\mathrm{Res}_{N_1}^G\,\HH^0(X,\Omega_X)$ are again given as in (\ref{eq:fulldiff1}).

We first determine the Green correspondents of these summands,
using the information in \cite[\S V]{Burkhardt}. There are
$1+(m-1)/2$ blocks of $k[G]$ of maximal defect $n$, consisting of the principal block $B_0$ and
$(m-1)/2$ blocks $B_1,\ldots,B_{(m-1)/2}$, and there are $1+(\ell+1)/4$ blocks of $k[G]$ of defect 0. 
There are precisely two isomorphism classes of simple $k[G]$-modules that belong to $B_0$, represented by
the trivial simple $k[G]$-module $T_0$ and a simple $k[G]$-module $T_1$ of $k$-dimension $\ell$.
For each $t\in\{1,\ldots,(m-1)/2\}$, there is precisely one isomorphism class of simple $k[G]$-modules
belonging to $B_t$, represented by a simple $k[G]$-module $\widetilde{T}_t$ of $k$-dimension $\ell+1$.
Let $\widetilde{T}_0=T_0\oplus T_1$. Note that the Brauer character of  $\widetilde{T}_t$, $0\le t\le (m-1)/2$, is 
the restriction to the 3-regular classes of the ordinary irreducible character $\widetilde{\delta}_t^*$,
$0\le t\le (m-1)/2$, with the following values:
\begin{equation}
\label{eq:deltavals2}
\widetilde{\delta}_t^*(e)=\ell+1;\quad \widetilde{\delta}_t^*(r_1)=1= \widetilde{\delta}_t^*(r_2); \quad  
\widetilde{\delta}_t^*(s)=0=\widetilde{\delta}_t^*(w^j); \quad \widetilde{\delta}_t^*((v'')^i)=(\xi_m)^{ti}+(\xi_m)^{-ti}
\end{equation}
where $\xi_m$ is a fixed primitive $m^{\mathrm{th}}$ root of unity.

As in \S\ref{sss:fulldifferent1}, we determine the Green correspondents of the non-projective indecomposable direct summands of 
$\mathrm{Res}_{N_1}^G\,\HH^0(X,\Omega_X)$, by using that there is a stable equivalence between 
the module categories of $k[G]$ and $k[N_1]$. 
If $n=1$ then $U_{1,2\cdot 3^{n-1}+1}^{(N_1)}=U_{1,3^n}^{(N_1)}$ is a projective $k[N_1]$-module.
If $n>1$ then the Green correspondent of $U_{1,2\cdot 3^{n-1}+1}^{(N_1)}$ belongs to $B_0$. 
Note that the Green correspondent of $S_0^{(N_1)}$ (resp. $S_1^{(N_1)}$) is $T_0$ (resp $T_1$).
This means that the Green correspondent of $U_{1,2\cdot 3^{n-1}+1}^{(N_1)}$ is the uniserial $k[G]$-module
of length $2\cdot 3^{n-1}+1$ whose socle is isomorphic to $T_1$. For $1\le t\le (m-1)/2$, the Green correspondent of 
$\widetilde{U}_{t,2\cdot3^{n-1}}^{(N_1)}$ belongs to the block $B_t$. Since $\ell+1\equiv 2\mod 3^n$, it follows that the 
Green correspondent of $\widetilde{U}_{t,2\cdot3^{n-1}}^{(N_1)}$ is a uniserial $k[G]$-module of length $2\cdot 3^{n-1}$ 
whose  composition factors are all isomorphic to $\widetilde{T}_t$.

Next, we determine the Brauer character $\widetilde{\beta}$ of the largest projective direct summand of 
$\HH^0(X,\Omega_X)$. For $i=0,1$, let $\Psi_i$ be the Brauer character of the projective $k[G]$-module cover $P(G,T_i)$ 
of $T_i$. Define $\widetilde{\beta}'$ to be the function on the 3-regular conjugacy classes of $G$ such that
$$\widetilde{\beta} = \delta_{n,1}\,\Psi_1 + \widetilde{\beta}'.$$ 
Using (\ref{eq:ell31}) and (\ref{eq:ell32}), (\ref{eq:Br1}) - (\ref{eq:Br4}) and (\ref{eq:deltavals2}), we obtain 
\begin{eqnarray*}
\widetilde{\beta}'(e) &=&(\ell-1)\left(\frac{(\ell+1)(\ell-10)}{24}-1\right);\\
\widetilde{\beta}'(r_1)&=&1-\,\frac{5(\ell-1)}{12}-\frac{h_\ell}{2}\sqrt{-\ell};\\
\widetilde{\beta}'(r_2)&=&1-\,\frac{5(\ell-1)}{12}+\frac{h_\ell}{2}\sqrt{-\ell};\\
\widetilde{\beta}'(s)&=&2-\frac{\ell+1}{4};\\
\widetilde{\beta}'(w^j)&=&2\qquad (w^j\not\in\{e,s\});\\
\widetilde{\beta}' ((v'')^i)&=&0\qquad((v'')^i\neq e).
\end{eqnarray*}
Let $\widetilde{\Psi}_t$ be the Brauer character of the projective $k[G]$-module cover $P(G,\widetilde{T}_t)$ of 
$\widetilde{T}_t$, $1\le t\le (m-1)/2$.
We have $1+(\ell+1)/4$ additional Brauer characters of projective indecomposable $k[G]$-modules that are also irreducible:
$\gamma_1,\gamma_2$ and $(\ell-3)/4$ characters $\eta^G$ that are constructed from characters $\eta$ of $W$ with values
$$\begin{array}{c|c|c|c|c|c|c}
&e&r_1&r_2&s&w^j&(v'')^i\\
&   &    &      &  & (w^j\not\in\{e,s\})&((v'')^i\neq e)\\\hline
\gamma_1&\frac{\ell-1}{2}&\frac{-1+\sqrt{-\ell}}{2}& \frac{-1-\sqrt{-\ell}}{2}&-(-1)^{(\ell+1)/4}&-(-1)^j&0\\
\gamma_2&\frac{\ell-1}{2}&\frac{-1-\sqrt{-\ell}}{2}& \frac{-1+\sqrt{-\ell}}{2}&-(-1)^{(\ell+1)/4}&-(-1)^j&0\\
\eta^G&\ell-1&-1&-1&-(\eta(s)+\overline{\eta}(s))&-(\eta(w^j)+\overline{\eta}(w^j))&0
\end{array}$$
where $\eta$ ranges over the characters of $W$ that are not equal to their conjugate $\overline{\eta}$.
Denote the corresponding projective indecomposable $k[G]$-modules by $P(G,\gamma_1)$, $P(G,\gamma_2)$
and $P(G,\eta^G)$, respectively.

Similarly to \S\ref{sss:fulldifferent1}, using the Cartan matrix given in \cite[\S V]{Burkhardt}, we get
$$\widetilde{\beta}' =  \frac{\ell-19}{12}\,\Psi_1+\sum_{t=1}^{(m-1)/2} \frac{\ell-19}{12}\,\widetilde{\Psi}_t+\langle \gamma_1,\widetilde{\beta}'\rangle\,\gamma_1
+\langle \gamma_2,\widetilde{\beta}'\rangle\,\gamma_2 + \sum_{\eta}\langle \eta^G,\widetilde{\beta}'\rangle\,\eta^G$$
where
\begin{eqnarray}
\label{eq:gamma14.4.2}
\langle \gamma_1,\widetilde{\beta}'\rangle&=&\left\{\begin{array}{c@{\quad:\quad}l}\frac{\ell-7}{24}-\,\frac{h_\ell}{2}&\ell\equiv 3\mod 8\\[1ex]\frac{\ell+5}{24}-\,\frac{h_\ell}{2}&\ell\equiv 7\mod 8;\end{array}\right. 
\\
\label{eq:gamma24.4.2}
\langle \gamma_2,\widetilde{\beta}'\rangle&=&\left\{\begin{array}{c@{\quad:\quad}l}\frac{\ell-7}{24}+\,\frac{h_\ell}{2}&\ell\equiv 3\mod 8\\[1ex]\frac{\ell+5}{24}+\,\frac{h_\ell}{2}&\ell\equiv 7\mod 8;\end{array}\right. 
\\
\label{eq:eta4.4.2}
\langle \eta^G,\widetilde{\beta}'\rangle&=&\left\{\begin{array}{c@{\quad:\quad}l}\frac{\ell-7}{12}&\eta(s)=-1,\\[1ex]\frac{\ell+5}{12}&\eta(s)=1.\end{array}\right. 
\end{eqnarray}
Therefore, we have proved the following result:
\begin{subprop}
\label{prop:fulldifferent2}
When $\ell\equiv -1\mod 4$ and $\ell\equiv 1\mod 3$, 
let $U_{T_1,2\cdot 3^{n-1}+1}^{(G)}$ $($resp. $U_{\widetilde{T}_t,2\cdot 3^{n-1}}^{(G)}$$)$ denote the uniserial
$k[G]$-module of length $2\cdot 3^{n-1}+1$ $($resp. $2\cdot 3^{n-1}$$)$ whose socle is isomorphic to $T_1$
$($resp. whose composition factors are all isomorphic to  $\widetilde{T}_t$$)$. 
In particular, if $n=1$ then $U_{T_1,2\cdot 3^{n-1}+1}^{(G)}=P(G,T_1)$ is a projective indecomposable $k[G]$-module.
As a $k[G]$-module,
\begin{eqnarray*}
\HH^0(X,\Omega_X)&\cong&\left(\frac{\ell-19}{12}+\delta_{n,1}\right)\,P(G,T_1)\oplus
\bigoplus_{t=1}^{(m-1)/2}\frac{\ell-19}{12}\,P(G,\widetilde{T}_t)\oplus \\
&&\langle \gamma_1,\widetilde{\beta}'\rangle\,P(G,\gamma_1)\oplus \langle \gamma_2,\widetilde{\beta}'\rangle\,P(G,\gamma_2)
 \oplus \bigoplus_{\eta}\langle \eta^G,\widetilde{\beta}'\rangle\,P(G,\eta^G)\oplus
\\
&&\left(1-\delta_{n,1}\right)\, U_{T_1,2\cdot 3^{n-1}+1}^{(G)} \oplus \bigoplus_{t=1}^{(m-1)/2}U_{\widetilde{T}_t,2\cdot 3^{n-1}}^{(G)}
\end{eqnarray*}
where $\langle \gamma_1,\widetilde{\beta}'\rangle$, $\langle \gamma_2,\widetilde{\beta}'\rangle$ and $\langle \eta^G,\widetilde{\beta}'\rangle$ are as in $(\ref{eq:gamma14.4.2})$,
$(\ref{eq:gamma24.4.2})$ and $(\ref{eq:eta4.4.2})$.
\end{subprop}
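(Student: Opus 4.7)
The plan is to mirror the proof of Proposition \ref{prop:fulldifferent1} given in \S\ref{sss:fulldifferent1}, making the modifications appropriate to the present case $\epsilon=1$, $\ell\equiv -\epsilon \pmod 4$. The starting point is again the decomposition (\ref{eq:decompN1different}), whose only non-projective indecomposable summands are $U_{1,2\cdot 3^{n-1}+1}^{(N_1)}$ and the $\widetilde{U}_{t,2\cdot 3^{n-1}}^{(N_1)}$ for $1\le t\le (m-1)/2$. The first step is to identify their Green correspondents in $k[G]$ by applying the stable equivalence between the module categories of $k[G]$ and $k[N_1]$ together with the block and Brauer tree data from \cite[\S V]{Burkhardt}, which describes the principal block $B_0$ (containing the trivial simple module $T_0$ and a simple module $T_1$ of dimension $\ell$) and the blocks $B_1,\ldots,B_{(m-1)/2}$ of maximal defect (each containing a unique simple module $\widetilde{T}_t$ of dimension $\ell+1$), along with $1+(\ell+1)/4$ blocks of defect zero.

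In the second step, I would trace irreducible maps in the stable Auslander--Reiten quiver, using that the Green correspondence sends $S_0^{(N_1)}\mapsto T_0$ and $S_1^{(N_1)}\mapsto T_1$. Because $\ell+1\equiv 2\pmod{3^n}$, the Brauer tree of each non-principal block $B_t$ has all composition factors equal to $\widetilde{T}_t$, so the Green correspondent of $\widetilde{U}_{t,2\cdot 3^{n-1}}^{(N_1)}$ is the uniserial $k[G]$-module $U_{\widetilde{T}_t,2\cdot 3^{n-1}}^{(G)}$. For the principal block, walking along the Brauer tree of $B_0$ produces a uniserial module of length $2\cdot 3^{n-1}+1$ with socle $T_1$, namely $U_{T_1,2\cdot 3^{n-1}+1}^{(G)}$; when $n=1$ this module is exactly $P(G,T_1)$, which accounts for the $\delta_{n,1}$ adjustment in the final formula.

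The third step is to compute the Brauer character $\widetilde\beta$ of the projective part of $\HH^0(X,\Omega_X)$ by subtracting the Brauer characters of the non-projective summands identified above from the global Brauer character of $\HH^0(X,\Omega_X)$, whose values on $3$-regular classes are assembled from (\ref{eq:ell31}), (\ref{eq:ell32}) and (\ref{eq:Br1})--(\ref{eq:Br4}), together with (\ref{eq:deltavals2}). After the $\delta_{n,1}\,\Psi_1$ correction one arrives at the function $\widetilde\beta'$, whose values on the representatives $e,r_1,r_2,s,w^j,(v'')^i$ I would tabulate exactly as in \S\ref{sss:fulldifferent1}. The final step is to decompose $\widetilde\beta'$ as a non-negative integer combination of projective indecomposable Brauer characters using the Cartan matrix of $k[G]$ given in \cite[\S V]{Burkhardt}: this is done by computing the inner products $\langle \Phi_E,\widetilde\beta'\rangle$ for $E$ ranging over simple $k[G]$-modules in blocks of positive defect, and directly reading off the coefficients of the defect-zero projectives $\gamma_1$, $\gamma_2$ and $\eta^G$.

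The main obstacle is the bookkeeping needed for the inner products: the contributions from $r_1$ and $r_2$ involve the class-number term $\pm (h_\ell/2)\sqrt{-\ell}$, which must match exactly the Gauss-sum values of $\gamma_1$ and $\gamma_2$ to produce the formulas (\ref{eq:gamma14.4.2})--(\ref{eq:gamma24.4.2}); and the constant $(\ell-19)/12$ for the maximal-defect projectives must be verified simultaneously with the $\delta_{n,1}$ correction so that the total contribution in the principal block balances against $U_{T_1,2\cdot 3^{n-1}+1}^{(G)}$. Once these inner products are checked, the decomposition claimed in Proposition \ref{prop:fulldifferent2} follows from the Krull--Schmidt theorem, since a projective $k[G]$-module is determined up to isomorphism by its Brauer character.
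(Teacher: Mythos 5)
Your proposal is correct and follows essentially the same route as the paper's proof in \S\ref{sss:fulldifferent2}: restriction to $N_1$ via (\ref{eq:decompN1different}), identification of Green correspondents using the block data of \cite[\S V]{Burkhardt} (with the simplification that here $S_1^{(N_1)}$ corresponds directly to the simple module $T_1$, so no Auslander--Reiten quiver walk is actually needed in the principal block), the $\delta_{n,1}\,\Psi_1$ correction, and the decomposition of $\widetilde\beta'$ via inner products against the Cartan matrix. The only cosmetic difference is that you invoke the stable AR-quiver argument where the paper reads off the correspondent immediately; this does no harm.
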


\subsubsection{The $k[G]$-module structure of $\HH^0(X,\Omega_X)$ when $\ell\equiv 1\mod 4$ and $\ell\equiv 1\mod 3$}
\label{sss:fullequal1}
This is the case when $\epsilon=1$ and $\ell\equiv \epsilon\mod 4$. 
By (\ref{eq:decompN1equal}), the non-projective indecomposable direct summands of 
$\mathrm{Res}_{N_1}^G\,\HH^0(X,\Omega_X)$ are given by
\begin{equation}
\label{eq:fulleq1}
U_{1,1,2\cdot 3^{n-1}+1}^{(N_1)}\oplus U_{0,1,2\cdot 3^{n-1}}^{(N_1)}\oplus
\bigoplus_{t=1}^{m/2-1} \widetilde{U}_{t,2\cdot3^{n-1}}^{(N_1)}.
\end{equation}

We first determine the Green correspondents of these summands, using the information in \cite[\S III]{Burkhardt}. There are
$1+(m/2)$ blocks of $k[G]$ of maximal defect $n$, consisting of the principal block $B_{00}$, another block $B_{01}$ and
$(m/2-1)$ blocks $B_1,\ldots,B_{(m/2-1)}$. Moreover, there are $(\ell-1)/4$ blocks of $k[G]$ of defect 0. 
There are precisely two isomorphism classes of simple $k[G]$-modules that belong to $B_{00}$ (resp. $B_{01}$), 
represented by the trivial simple $k[G]$-module $T_{0,0}$ and a simple $k[G]$-module $T_{1,1}$ of $k$-dimension $\ell$
(resp. by two simple $k[G]$-modules $T_{0,1}$ and $T_{1,0}$ of $k$-dimension $(\ell+1)/2$).
For each $t\in\{1,\ldots,(m/2-1)\}$, there is precisely one isomorphism class of simple $k[G]$-modules
belonging to $B_t$, represented by a simple $k[G]$-module $\widetilde{T}_t$ of $k$-dimension $\ell+1$.
Note that the Brauer character of  $\widetilde{T}_t$, $1\le t\le (m/2-1)$, is 
the restriction to the 3-regular classes of the ordinary irreducible character $\widetilde{\delta}_t^*$,
$1\le t\le (m/2-1)$, with the following values:
\begin{equation}
\label{eq:deltavals3}
\widetilde{\delta}_t^*(e)=\ell+1;\quad \widetilde{\delta}_t^*(r_1)=1= \widetilde{\delta}_t^*(r_2); \quad  
\widetilde{\delta}_t^*((v'')^i)=(\xi_m)^{ti}+(\xi_m)^{-ti}; \quad \widetilde{\delta}_t^*(w^j)=0
\end{equation}
where $\xi_m$ is a fixed primitive $m^{\mathrm{th}}$ root of unity and we allow $i=m/2$, which gives us
$\widetilde{\delta}_t^*(s)=2 \, (-1)^t$.

As in the previous subsections, we determine the Green correspondents of the non-projective indecomposable direct summands of 
$\mathrm{Res}_{N_1}^G\,\HH^0(X,\Omega_X)$, by using that there is a stable equivalence between 
the module categories of $k[G]$ and $k[N_1]$. 
If $n=1$ then $U_{1,1,2\cdot 3^{n-1}+1}^{(N_1)}=U_{1,1,3^n}^{(N_1)}$ is a projective $k[N_1]$-module.
If $n>1$ then the Green correspondent of $U_{1,1,2\cdot 3^{n-1}+1}^{(N_1)}$ belongs to $B_{00}$. 
Note that the Green correspondent of $S_{0,0}^{(N_1)}$ (resp. $S_{1,1}^{(N_1)}$) is $T_{0,0}$ (resp $T_{1,1}$).
This means that the Green correspondent of $U_{1,1,2\cdot 3^{n-1}+1}^{(N_1)}$ is the uniserial $k[G]$-module
of length $2\cdot 3^{n-1}+1$ whose socle is isomorphic to $T_{1,1}$. On the other hand, the Green correspondent
of $S_{0,1}^{(N_1)}$ is one of $T_{0,1}$ or $T_{1,0}$. We relabel the simple $k[G]$-modules, if necessary, to
be able to assume that the Green correspondent of $S_{0,1}^{(N_1)}$ (resp. $S_{1,0}^{(N_1)}$) is $T_{0,1}$ 
(resp $T_{1,0}$). This means that the Green correspondent of $U_{0,1,2\cdot 3^{n-1}}^{(N_1)}$ is the uniserial 
$k[G]$-module of length $2\cdot 3^{n-1}$ whose socle is isomorphic to $T_{0,1}$. 
For $1\le t\le (m/2-1)$, the Green correspondent of 
$\widetilde{U}_{t,2\cdot3^{n-1}}^{(N_1)}$ belongs to the block $B_t$. Since $\ell+1\equiv 2\mod 3^n$, it follows that the 
Green correspondent of $\widetilde{U}_{t,2\cdot3^{n-1}}^{(N_1)}$ is a uniserial $k[G]$-module of length $2\cdot 3^{n-1}$ 
whose  composition factors are all isomorphic to $\widetilde{T}_t$.

Next, we determine the Brauer character $\widetilde{\beta}$ of the largest projective direct summand of 
$\HH^0(X,\Omega_X)$. For $i,j\in\{0,1\}$, let $\Psi_{i,j}$ be the Brauer character of the projective $k[G]$-module 
cover $P(G,T_{i,j})$ of $T_{i,j}$. Define $\widetilde{\beta}'$ to be the function on the 3-regular conjugacy classes of $G$ 
such that
$$\widetilde{\beta} = \delta_{n,1}\,\Psi_{1,1} + \widetilde{\beta}'.$$ 
Using (\ref{eq:ell1}), (\ref{eq:Br5}) - (\ref{eq:Br8}) and (\ref{eq:deltavals3}), we obtain 
\begin{eqnarray*}
\widetilde{\beta}'(e) &=&(\ell-1)\left(\frac{(\ell+1)(\ell-10)}{24}-1\right);\\
\widetilde{\beta}'(r_i)&=&1-\,\frac{5(\ell-1)}{12}\qquad (i=1,2);\\
\widetilde{\beta}'(s)&=&-\frac{\ell-1}{4};\\
\widetilde{\beta}' ((v'')^i)&=&0\qquad((v'')^i\not\in\{e,s\});\\
\widetilde{\beta}'(w^j)&=&2\qquad (w^j\neq e).
\end{eqnarray*}
Let $\widetilde{\Psi}_t$ be the Brauer character of the projective $k[G]$-module cover $P(G,\widetilde{T}_t)$ of 
$\widetilde{T}_t$, $1\le t\le (m/2-1)$.
We have $(\ell-1)/4$ additional Brauer characters $\eta^G$ of projective indecomposable $k[G]$-modules 
that are constructed from characters $\eta$ of $W$ with values
$$\eta^G(e)=\ell-1;\quad \eta^G(r_1)=-1=\eta^G(r_2);\quad \eta^G(s)=0=\eta^G((v'')^i);\quad \eta^G(w^j)=
-(\eta(w^j)+\overline{\eta}(w^j))$$
where $\eta$ ranges over the characters of $W$ that are not equal to their conjugate $\overline{\eta}$.
Denote the corresponding projective indecomposable $k[G]$-modules by $P(G,\eta^G)$.

Similarly to the previous subsections, using the Cartan matrix given in \cite[\S III]{Burkhardt}, we get
$$\widetilde{\beta}' =  \frac{\ell-25}{12}\,\Psi_{1,1}+\frac{\ell-19-6(-1)^{m/2}}{24}\,(\Psi_{0,1}+\Psi_{1,0})+
\sum_{t=1}^{m/2-1} \frac{\ell-19-6(-1)^{t}}{12}\,\widetilde{\Psi}_t+
\sum_{\eta}\frac{\ell-1}{12}\,\eta^G.$$
Therefore, we have proved the following result:
\begin{subprop}
\label{prop:fullequal1}
When $\ell\equiv 1\mod 4$ and $\ell\equiv 1\mod 3$, 
let $U_{T_{1,1},2\cdot 3^{n-1}+1}^{(G)}$ $($resp. $U_{T_{0,1},2\cdot 3^{n-1}}^{(G)}$$)$ denote the uniserial
$k[G]$-module of length $2\cdot 3^{n-1}+1$ $($resp. $2\cdot 3^{n-1}$$)$ whose socle is isomorphic to $T_{1,1}$
$($resp. $T_{0,1}$$)$. In particular, if
$n=1$ then $U_{T_{1,1},2\cdot 3^{n-1}+1}^{(G)}=P(G,T_{1,1})$ is a projective indecomposable $k[G]$-module. Let $U_{\widetilde{T}_t,2\cdot 3^{n-1}}^{(G)}$ denote the uniserial
$k[G]$-module of length $2\cdot 3^{n-1}$ whose composition factors all isomorphic to  $\widetilde{T}_t$. 
As a $k[G]$-module,
\begin{eqnarray*}
\HH^0(X,\Omega_X)&\cong&\left(\frac{\ell-25}{12}+\delta_{n,1}\right)\,P(G,T_{1,1})\oplus
\frac{\ell-19-6(-1)^{m/2}}{24}\,\left(P(G,T_{0,1})\oplus P(G,T_{1,0})\right)\oplus\\
&&\bigoplus_{t=1}^{m/2-1}\frac{\ell-19-6(-1)^{t}}{12}\,P(G,\widetilde{T}_t)\oplus 
\bigoplus_{\eta}\frac{\ell-1}{12}\,P(G,\eta^G)\oplus\\
&&\left(1-\delta_{n,1}\right)\, U_{T_{1,1},2\cdot 3^{n-1}+1}^{(G)} \oplus U_{T_{0,1},2\cdot 3^{n-1}}^{(G)} \oplus
 \bigoplus_{t=1}^{m/2-1}U_{\widetilde{T}_t,2\cdot 3^{n-1}}^{(G)}.
\end{eqnarray*}
\end{subprop}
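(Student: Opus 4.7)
The plan is to follow the template established in Propositions \ref{prop:fulldifferent1} and \ref{prop:fulldifferent2}: first identify the Green correspondents in $k[G]$ of the non-projective indecomposable summands of $\mathrm{Res}_{N_1}^G \HH^0(X,\Omega_X)$, then subtract their Brauer characters from the global Brauer character of $\HH^0(X,\Omega_X)$, and finally invert the Cartan matrix of $k[G]$ to read off the projective indecomposable multiplicities.

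First I would record the input data. Since $\ell \equiv 1 \bmod 4$ forces $\epsilon = 1$ and $\ell \equiv \epsilon \bmod 4$, equation (\ref{eq:decompN1equal}) and the list (\ref{eq:fulleq1}) give the non-projective summands of $\mathrm{Res}_{N_1}^G \HH^0(X,\Omega_X)$. The block structure of $k[G]$ from \cite[\S III]{Burkhardt} provides the principal block $B_{00}$ with simples $T_{0,0}$ and $T_{1,1}$ (of dimensions $1$ and $\ell$), the block $B_{01}$ with simples $T_{0,1}$ and $T_{1,0}$ (both of dimension $(\ell+1)/2$), the blocks $B_1,\ldots,B_{m/2-1}$ each containing a unique simple $\widetilde{T}_t$ of dimension $\ell+1$, and $(\ell-1)/4$ further blocks of defect zero yielding the irreducible Brauer characters $\eta^G$ induced from non-real characters $\eta$ of $W$.

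Next I would determine Green correspondents using the stable equivalence between $k[G]$ and $k[N_1]$, arguing exactly as in \S\ref{sss:fulldifferent1} by tracing irreducible maps in the stable Auslander--Reiten quiver. The socle correspondence $S_{0,0}^{(N_1)} \leftrightarrow T_{0,0}$ and $S_{1,1}^{(N_1)} \leftrightarrow T_{1,1}$ forces the Green correspondent of $U_{1,1,2\cdot 3^{n-1}+1}^{(N_1)}$ to be the uniserial $k[G]$-module $U_{T_{1,1},2\cdot 3^{n-1}+1}^{(G)}$ with socle $T_{1,1}$ whenever $n>1$; when $n=1$ the restricted summand is already projective, producing the Kronecker-delta correction $\delta_{n,1}\,\Psi_{1,1}$. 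Since $\ell+1 \equiv 2 \bmod 3^n$, each $\widetilde{U}_{t,2\cdot 3^{n-1}}^{(N_1)}$ lifts to a uniserial module of length $2\cdot 3^{n-1}$ all of whose composition factors are $\widetilde{T}_t$. For $U_{0,1,2\cdot 3^{n-1}}^{(N_1)}$ the Green correspondent lies in $B_{01}$, and after relabeling $T_{0,1}$ and $T_{1,0}$ so that $S_{0,1}^{(N_1)} \leftrightarrow T_{0,1}$, we obtain $U_{T_{0,1},2\cdot 3^{n-1}}^{(G)}$.

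Finally I would compute the Brauer character of the projective complement. Write $\widetilde{\beta} = \delta_{n,1}\,\Psi_{1,1} + \widetilde{\beta}'$ for the Brauer character of the projective part, and evaluate $\widetilde{\beta}'$ on all $3$-regular conjugacy classes using (\ref{eq:ell1}), (\ref{eq:Br5})--(\ref{eq:Br8}) and (\ref{eq:deltavals3}), subtracting off the Brauer character contributions of the non-projective Green correspondents identified above. Taking inner products $\langle \Psi_E, \widetilde{\beta}' \rangle$ against each simple $k[G]$-module $E$ and inverting via the Cartan matrix recorded in \cite[\S III]{Burkhardt} yields the coefficients $(\ell-25)/12$, $(\ell-19-6(-1)^{m/2})/24$, $(\ell-19-6(-1)^t)/12$ and $(\ell-1)/12$ stated in the proposition.

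The step I expect to require the most care is the correct identification of the Green correspondent of $S_{0,1}^{(N_1)}$ within $B_{01}$, since $T_{0,1}$ and $T_{1,0}$ are Galois conjugate and the ramification data from \S\ref{sss:ramiequal} involves two non-conjugate dihedral subgroups $\Delta_1, \Delta_2$. One must check that the choice of inertia subgroup $\Delta_1$ (and not $\Delta_2$) appearing in the ramification of $X \to X/G$ pins down the labeling, matching the characterization of $T_{0,1}$ by its restriction to $\Sigma_3$-subgroups promised in Theorem \ref{thm:modularresult}(i)(3). All remaining calculations are routine manipulations of Brauer characters and Cartan inversion parallel to those carried out in \S\ref{sss:fulldifferent2}.
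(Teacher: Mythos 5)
Your proposal follows essentially the same route as the paper's own argument in \S\ref{sss:fullequal1}: Green correspondence applied to the summands in (\ref{eq:fulleq1}) via the stable equivalence with $k[N_1]$, followed by computing the Brauer character $\widetilde{\beta}=\delta_{n,1}\Psi_{1,1}+\widetilde{\beta}'$ of the projective complement and inverting the Cartan matrix from \cite[\S III]{Burkhardt}. The one point you flag as delicate, distinguishing $T_{0,1}$ from $T_{1,0}$, is handled in the paper by a relabeling convention (harmless here since their projective multiplicities coincide), with the intrinsic characterization via restriction to the $\Sigma_3$-inertia subgroups recorded in the statement of Theorem \ref{thm:modularresult}(i)(3), exactly as you anticipate.
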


\subsubsection{The $k[G]$-module structure of $\HH^0(X,\Omega_X)$ when $\ell\equiv -1\mod 4$ and $\ell\equiv -1\mod 3$}
\label{sss:fullequal2}
This is the case when $\epsilon=-1$ and $\ell\equiv \epsilon\mod 4$. 
By (\ref{eq:decompN1equal}), the non-projective indecomposable direct summands of 
$\mathrm{Res}_{N_1}^G\,\HH^0(X,\Omega_X)$ are again given as in (\ref{eq:fulleq1}).

We first determine the Green correspondents of the non-projective indecomposable direct summands of 
$\mathrm{Res}_{N_1}^G\,\HH^0(X,\Omega_X)$, using the information in \cite[\S VI]{Burkhardt}. There are
$1+(m/2)$ blocks of $k[G]$ of maximal defect $n$, consisting of the principal block $B_{00}$, another block $B_{01}$ and
$(m/2-1)$ blocks $B_1,\ldots,B_{(m/2-1)}$. Moreover, there are $(\ell-3)/4$ blocks of $k[G]$ of defect 0. 
There are precisely two isomorphism classes of simple $k[G]$-modules that belong to $B_{00}$ (resp. $B_{01}$), 
represented by the trivial simple $k[G]$-module $T_0$ and a simple $k[G]$-module $\widetilde{T}_0$ of $k$-dimension $\ell-1$
(resp. by two simple $k[G]$-modules $T_{0,1}$ and $T_{1,0}$ of $k$-dimension $(\ell-1)/2$).
For each $t\in\{1,\ldots,(m/2-1)\}$, there is precisely one isomorphism class of simple $k[G]$-modules
belonging to $B_t$, represented by a simple $k[G]$-module $\widetilde{T}_t$ of $k$-dimension $\ell-1$.
Note that the Brauer character of  $\widetilde{T}_t$, $0\le t\le (m/2-1)$, is 
the restriction to the 3-regular classes of the ordinary irreducible character $\widetilde{\delta}_t^*$,
$0\le t\le (m/2-1)$, with the following values:
\begin{equation}
\label{eq:deltavals4}
\widetilde{\delta}_t^*(e)=\ell-1;\quad \widetilde{\delta}_t^*(r_1)=-1= \widetilde{\delta}_t^*(r_2); \quad  
\widetilde{\delta}_t^*((v'')^i)=-((\xi_m)^{ti}+(\xi_m)^{-ti}); \quad \widetilde{\delta}_t^*(w^j)=0
\end{equation}
where $\xi_m$ is a fixed primitive $m^{\mathrm{th}}$ root of unity and we allow $i = m/2$, which gives us 
$\widetilde{\delta}_t^*(s)=-2 \, (-1)^t$.

As in the previous subsections, we determine the Green correspondents of the non-projective indecomposable direct summands of 
$\mathrm{Res}_{N_1}^G\,\HH^0(X,\Omega_X)$, by using that there is a stable equivalence between 
the module categories of $k[G]$ and $k[N_1]$. 
If $n=1$ then $U_{1,1,2\cdot 3^{n-1}+1}^{(N_1)}=U_{1,1,3^n}^{(N_1)}$ is a projective $k[N_1]$-module.
If $n>1$ then the Green correspondent of $U_{1,1,2\cdot 3^{n-1}+1}^{(N_1)}$ belongs to $B_{00}$. 
Since the Green correspondent of $S_0^{(N_1)}$ is $T_0$, it follows that the Green correspondent of  $S_1^{(N_1)}$
is a uniserial $k[G]$-module of length $(3^n-1)/2$ whose composition factors are all isomorphic to $\widetilde{T}_0$.
We now follow the irreducible homomorphisms in the stable Auslander-Reiten quiver of $B_{00}$ starting with the
Green correspondent of $S_1^{(N_1)}$ to arrive, after $2\cdot 3^{n-1}$ such morphisms, at a uniserial $k[G]$-module
of length $(3^{n-1}-1)/2$ whose composition factors are all isomorphic to $\widetilde{T}_0$. This must be the
Green correspondent of $U_{1,1,2\cdot 3^{n-1}+1}^{(N_1)}$.  On the other hand, the Green correspondent
of $U_{0,1,2\cdot 3^{n-1}}^{(N_1)}$ belongs to the block $B_{01}$. Since $(\ell-1)/2\equiv -1\mod 3^n$, it follows
that the Green correspondent of $U_{0,1,2\cdot 3^{n-1}}^{(N_1)}$ is a uniserial $k[G]$-module of length $3^{n-1}$ 
whose socle is isomorphic to either $T_{0,1}$ or $T_{1,0}$. By relabeling the simple $k[G]$-modules, if necessary, we
are able to assume that the socle of the Green correspondent of $U_{0,1,2\cdot 3^{n-1}}^{(N_1)}$ is isomorphic to 
$T_{0,1}$. Note that the Brauer characters of $T_{0,1}$ and $T_{1,0}$ only differ with respect to their values at the
elements of order $\ell$ in $G$. Since we have already chosen a square root of $-\ell$ to obtain (\ref{eq:ell31}) and (\ref{eq:ell32}), we
let $s_{01}\in\{\pm 1\}$ be such that the Brauer character $\beta(T_{0,1})$ satsfies
\begin{equation}
\label{eq:wretcheddelta}
\beta(T_{0,1})(r_1)=\frac{-1+s_{01}\sqrt{-\ell}}{2}.
\end{equation}
For $1\le t\le (m/2-1)$, the Green correspondent of 
$\widetilde{U}_{t,2\cdot3^{n-1}}^{(N_1)}$ belongs to the block $B_t$. Since $\ell-1\equiv -2\mod 3^n$, it follows that the 
Green correspondent of $\widetilde{U}_{t,2\cdot3^{n-1}}^{(N_1)}$ is a uniserial $k[G]$-module of length $3^{n-1}$ whose 
composition factors are all isomorphic to $\widetilde{T}_t$.

Next, we determine the Brauer character $\widetilde{\beta}$ of the largest projective direct summand of 
$\HH^0(X,\Omega_X)$. 
Since $(3^{n-1}-1)/2=0$ when $n=1$, we do not need to distinguish between the cases $n=1$ and $n>1$. 
Using (\ref{eq:ell31}) and (\ref{eq:ell32}), (\ref{eq:Br5}) - (\ref{eq:Br8}), (\ref{eq:deltavals4}) and (\ref{eq:wretcheddelta}), 
we obtain 
\begin{eqnarray*}
\widetilde{\beta}(e) &=&1+\frac{(\ell-1)(\ell^2-7\ell+4)}{24};\\
\widetilde{\beta}(r_1)&=&-\,\frac{\ell-5}{6}-\frac{h_\ell+s_{01}}{2}\sqrt{-\ell};\\
\widetilde{\beta}(r_2)&=&-\,\frac{\ell-5}{6}+\frac{h_\ell+s_{01}}{2}\sqrt{-\ell};\\
\widetilde{\beta}(s)&=&-\frac{\ell+1}{4};\\
\widetilde{\beta} ((v'')^i)&=&0\qquad((v'')^i\not\in\{e,s\});\\
\widetilde{\beta}(w^j)&=&1\qquad (w^j\neq e).
\end{eqnarray*}
Let $\Psi_0$ be the Brauer character of the projective $k[G]$-module cover $P(G,T_0)$ of $T_0$. 
For $\{i,j\}=\{0,1\}$, let $\Psi_{i,j}$ be the Brauer character of the projective $k[G]$-module 
cover $P(G,T_{i,j})$ of $T_{i,j}$. 
Let $\widetilde{\Psi}_t$ be the Brauer character of the projective $k[G]$-module cover $P(G,\widetilde{T}_t)$ of 
$\widetilde{T}_t$, $0\le t\le (m/2-1)$.
We have $(\ell-3)/4$ additional Brauer characters $\eta^G$ of projective indecomposable $k[G]$-modules that are also irreducible and
that are constructed from characters $\eta$ of $W$ with values
$$\eta^G(e)=\ell+1;\quad \eta^G(r_1)=1=\eta^G(r_2);\quad \eta^G(s)=0=\eta^G((v'')^i);\quad \eta^G(w^j)=
\eta(w^j)+\overline{\eta}(w^j)$$
where $\eta$ ranges over the characters of $W$ that are not equal to their conjugate $\overline{\eta}$.
Denote the corresponding projective indecomposable $k[G]$-modules by $P(G,\eta^G)$.

Similarly to the previous subsections, using the Cartan matrix given in \cite[\S VI]{Burkhardt}, we get
\begin{eqnarray*}
\widetilde{\beta} &=&  \frac{\ell+1}{12}\,\widetilde{\Psi}_0+
\left(\frac{(\ell-5+6(-1)^{m/2})}{24}-\frac{s_{01}h_\ell+1}{2}\right)\,\Psi_{0,1}+
\left(\frac{(\ell-5+6(-1)^{m/2})}{24}+\frac{s_{01}h_\ell+1}{2}\right)\,\Psi_{1,0}+\\
&&\sum_{t=1}^{m/2-1} \frac{(\ell-5+6(-1)^{t})}{12}\,\widetilde{\Psi}_t+
\sum_{\eta}\frac{\ell-11}{12}\,\eta^G.
\end{eqnarray*}
Therefore, we have proved the following result:
\begin{subprop}
\label{prop:fullequal2}
When $\ell\equiv -1\mod 4$ and $\ell\equiv -1\mod 3$, 
let $U_{\widetilde{T}_0,(3^{n-1}-1)/2}^{(G)}$ $($resp. $U_{\widetilde{T}_t, 3^{n-1}}^{(G)}$$)$ denote the uniserial
$k[G]$-module of length $(3^{n-1}-1)/2$ $($resp. $3^{n-1}$$)$ whose composition factors are all isomorphic to 
$\widetilde{T}_0$ $($resp. $\widetilde{T}_t$$)$. In particular, if 
$n=1$ then $U_{\widetilde{T}_0,(3^{n-1}-1)/2}^{(G)} =0$.
Let $U_{T_{0,1},3^{n-1}}^{(G)}$ denote the uniserial $k[G]$-module
of length $3^{n-1}$ whose socle is isomorphic to $T_{0,1}$. 
As a $k[G]$-module,
\begin{eqnarray*}
\HH^0(X,\Omega_X)&\cong&\frac{\ell+1}{12}\,P(G,\widetilde{T}_0)\oplus
\left(\frac{(\ell-5+6(-1)^{m/2})}{24}-\frac{s_{01}h_\ell+1}{2}\right)\,P(G,T_{0,1})\oplus\\
&&\left(\frac{(\ell-5+6(-1)^{m/2})}{24}+\frac{s_{01}h_\ell+1}{2}\right)\,P(G,T_{1,0}))\oplus\\
&&\bigoplus_{t=1}^{m/2-1}\frac{(\ell-5+6(-1)^{t})}{12}\,P(G,\widetilde{T}_t)\oplus
\bigoplus_{\eta} \frac{\ell-11}{12}\,P(G,\eta^G)\oplus \\
&&U_{\widetilde{T}_0,(3^{n-1}-1)/2}^{(G)} \oplus U_{T_{0,1},3^{n-1}}^{(G)} \oplus
 \bigoplus_{t=1}^{m/2-1}U_{\widetilde{T}_t, 3^{n-1}}^{(G)}.
\end{eqnarray*}
\end{subprop}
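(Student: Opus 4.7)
The plan is to follow the same four-step strategy used to establish Propositions \ref{prop:fulldifferent1}, \ref{prop:fulldifferent2} and \ref{prop:fullequal1}, adapted to the block structure of $k[G]$ given in \cite[\S VI]{Burkhardt} that applies in the present congruence class $\ell\equiv -1\pmod{12}$. First, by equation (\ref{eq:decompN1equal}) the non-projective indecomposable direct summands of $\mathrm{Res}_{N_1}^G\,\HH^0(X,\Omega_X)$ are exactly those displayed in (\ref{eq:fulleq1}), namely one copy each of $U_{1,1,2\cdot 3^{n-1}+1}^{(N_1)}$ and $U_{0,1,2\cdot 3^{n-1}}^{(N_1)}$ together with the $\widetilde{U}_{t,2\cdot 3^{n-1}}^{(N_1)}$ for $1\le t\le m/2-1$.

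Second, I will identify Green correspondents block by block, exploiting the stable equivalence between the module categories of $k[G]$ and $k[N_1]$ and the Brauer tree (in fact Brauer-star) description of the blocks in \cite[\S VI]{Burkhardt}. The block $B_{00}$ contains $T_0$ and $\widetilde T_0$, and since the Green correspondent of $S_0^{(N_1)}$ is $T_0$ the Green correspondent of $S_1^{(N_1)}$ must be a uniserial module of length $(3^n-1)/2$ with all composition factors $\widetilde T_0$; walking $2\cdot 3^{n-1}$ steps along the stable Auslander--Reiten quiver (as in \S\ref{sss:fulldifferent1}) yields that the Green correspondent of $U_{1,1,2\cdot 3^{n-1}+1}^{(N_1)}$ is the uniserial module $U_{\widetilde T_0,(3^{n-1}-1)/2}^{(G)}$, which is the zero module exactly when $n=1$. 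The block $B_{01}$ contains two simple modules $T_{0,1}$ and $T_{1,0}$ of dimension $(\ell-1)/2$; since $(\ell-1)/2\equiv -1\pmod{3^n}$, the Green correspondent of $U_{0,1,2\cdot 3^{n-1}}^{(N_1)}$ is uniserial of length $3^{n-1}$ with simple socle, and by relabeling if necessary we may call that socle $T_{0,1}$ and fix the sign $s_{01}\in\{\pm1\}$ so that (\ref{eq:wretcheddelta}) holds. Finally, for $1\le t\le m/2-1$, since $\ell-1\equiv -2\pmod{3^n}$ the Green correspondent of $\widetilde{U}_{t,2\cdot 3^{n-1}}^{(N_1)}$ inside $B_t$ is uniserial of length $3^{n-1}$ with every composition factor isomorphic to $\widetilde T_t$.

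Third, I will compute the Brauer character $\widetilde\beta$ of the projective part of $\HH^0(X,\Omega_X)$ by subtracting from $\beta(\HH^0(X,\Omega_X))$ the Brauer characters of the non-projective summands already identified. The values of $\beta(\HH^0(X,\Omega_X))$ are obtained from \S\ref{sss:brauerell} (the formulas (\ref{eq:ell31}) and (\ref{eq:ell32}) at elements of order $\ell$) and \S\ref{sss:brauerequal} (the formulas (\ref{eq:Br5})--(\ref{eq:Br8}) at the remaining $3$-regular classes), while the Brauer characters of the non-projective summands are deduced from (\ref{eq:deltavals4}) together with (\ref{eq:wretcheddelta}). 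Since the uniserial module $U_{\widetilde T_0,(3^{n-1}-1)/2}^{(G)}$ has character equal to $(3^{n-1}-1)/2$ times the character of $\widetilde T_0$, and the analogous statement holds for $U_{T_{0,1},3^{n-1}}^{(G)}$ and each $U_{\widetilde T_t,3^{n-1}}^{(G)}$, this step is a routine (if lengthy) bookkeeping of character values on the classes $e$, $r_1$, $r_2$, $s$, $(v'')^i$, $w^j$.

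Finally, I will decompose $\widetilde\beta$ as a non-negative integer combination of the projective indecomposable Brauer characters $\widetilde\Psi_0$, $\Psi_{0,1}$, $\Psi_{1,0}$, $\widetilde\Psi_t$ and $\eta^G$. Using the standard inner product $\langle\Phi_E,\phi_{E'}\rangle=\delta_{E,E'}$ on the space of $3$-regular class functions, the coefficient of each $\Psi_E$ is $\langle\Psi_E,\widetilde\beta\rangle$, which can be read off by pairing $\widetilde\beta$ with the simple Brauer characters; then inverting with the explicit Cartan matrix in \cite[\S VI]{Burkhardt} yields the multiplicities of the projective covers and produces exactly the formula displayed in the statement. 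The principal difficulty will be the $B_{01}$ contribution: the two simples $T_{0,1}$ and $T_{1,0}$ are distinguished from each other only by their values $( -1\pm s_{01}\sqrt{-\ell})/2$ at $r_1,r_2$, so the sign $h_\ell+s_{01}$ appearing in the multiplicities of $P(G,T_{0,1})$ and $P(G,T_{1,0})$ is where the class-number computation of \S\ref{sss:brauerell} and the convention (\ref{eq:wretcheddelta}) must be combined carefully; once this sign is pinned down the remaining multiplicities are forced by the Cartan relations, and the non-negativity of all coefficients serves as a consistency check on the entire computation.
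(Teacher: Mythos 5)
Your overall strategy is exactly the paper's: the paper's \S\ref{sss:fullequal2} carries out precisely the four steps you describe, in the same order, with the same inputs --- the summands from (\ref{eq:fulleq1}), Green correspondence via the stable equivalence with $k[N_1]$ and the block data of \cite[\S VI]{Burkhardt}, the Brauer character values from \S\ref{sss:brauerell} and \S\ref{sss:brauerequal}, and the Cartan matrix to extract the projective multiplicities. The identification of the Green correspondents (the walk along the Auslander--Reiten quiver in $B_{00}$ giving $U_{\widetilde{T}_0,(3^{n-1}-1)/2}^{(G)}$, the length-$3^{n-1}$ uniserial with socle $T_{0,1}$ in $B_{01}$, and the length-$3^{n-1}$ uniserials in the $B_t$) matches the paper verbatim.

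There is, however, one concrete slip in your third step that would corrupt the most delicate part of the answer. You assert that the Brauer character of $U_{T_{0,1},3^{n-1}}^{(G)}$ is ``analogously'' $3^{n-1}$ times the character of $T_{0,1}$. That is false for $n>1$: the block $B_{01}$ contains \emph{two} simple modules $T_{0,1}$ and $T_{1,0}$, and the composition factors of a uniserial module in $B_{01}$ alternate between them (this is visible already on the $N_1$ side, where the projective cover of $S_{i,j}^{(N_1)}$ has alternating factors $S_{i,j}^{(N_1)}, S_{j,i}^{(N_1)},\ldots$, and is preserved by the stable equivalence). The correct character is $\tfrac{3^{n-1}+1}{2}\,\beta(T_{0,1})+\tfrac{3^{n-1}-1}{2}\,\beta(T_{1,0})$, whose value at $r_1$ is $-\tfrac{3^{n-1}}{2}+\tfrac{s_{01}}{2}\sqrt{-\ell}$ rather than $-\tfrac{3^{n-1}}{2}+\tfrac{3^{n-1}s_{01}}{2}\sqrt{-\ell}$. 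Since $\beta(T_{0,1})$ and $\beta(T_{1,0})$ agree everywhere except in the coefficient of $\sqrt{-\ell}$ at $r_1,r_2$, and since that coefficient is exactly what produces the asymmetric terms $\pm\tfrac{s_{01}h_\ell+1}{2}$ in the multiplicities of $P(G,T_{0,1})$ and $P(G,T_{1,0})$, following your formula literally gives the wrong projective part of $\HH^0(X,\Omega_X)$ whenever $n>1$ (one would obtain $-\tfrac{h_\ell+3^{n-1}s_{01}}{2}$ in place of $-\tfrac{h_\ell+s_{01}}{2}$ for the $\sqrt{-\ell}$-coefficient of $\widetilde\beta(r_1)$). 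You correctly flag the $B_{01}$ sign bookkeeping as the danger zone, but the fix is not just the choice of $s_{01}$: it is the alternation of composition factors inside the uniserial Green correspondent. With that correction the rest of your plan goes through and reproduces the paper's computation.
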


\begin{subremark}
\label{rem:sign}
The sign $s_{01}$ from $(\ref{eq:wretcheddelta})$ depends on the relationship between the socle of the 
Green correspondent of $T_{0,1}$ and the values of the Brauer character of $T_{0,1}$ on elements of
order $\ell$. As in Theorem \ref{thm:modularresult}, let $H_1$ and $H_2$ be representatives of the two conjugacy
classes of subgroups of $G$ that are isomorphic to $\Sigma_3$. By our definition of $\Delta_1$ and $\Delta_2$ in
\S\ref{sss:ramiequal}, we can choose $H_1\le \Delta_1$ and $H_2\le \Delta_2$. 
Recalling our definition of $S_{0,1}^{(N_1)}$, we see that the restriction of $T_{0,1}$ to $H_1$ (resp. $H_2$)
is the direct sum of a 2-dimensional uniserial module whose socle is the trivial simple module 
(resp. the simple module corresponding to the sign character) and a projective module. 

Since the Brauer character of a 2-dimensional uniserial module for $\Sigma_3$ in characteristic 3
does not determine its isomorphism class, it is not so easy to connect the two possibilities of 
square roots of $-\ell$ going into the values of the Brauer characters of $\HH^0(X,\Omega_X)$
and of $T_{0,1}$ at elements of order $\ell$.

We do not have a formula in general for $s_{01}$ when $\ell\equiv -1\mod 12$. But, for example, if $\ell=11$
then $h_\ell=1$ and $m=2$, which means that the multiplicity of $P(G,T_{0,1})$ in 
$\HH^0(X,\Omega_X)$ is equal to $-(s_{01}+1)/2$. Since this number must be non-negative, 
it follows that $s_{01}=-1$ when $\ell=11$.
\end{subremark}

\subsection{Proof of Theorem $\ref{thm:modularresult}$}
\label{ss:separate}
Part (i) of Theorem \ref{thm:modularresult} follows directly from
Propositions \ref{prop:fulldifferent1} - \ref{prop:fullequal2}. 
For part (ii), we notice that the maximal ideal $\mathcal{P}_3$ of $A$ containing 3
corresponds uniquely to a place $v$ of $F$ over 3. In other words, 
$k(\mathcal{P}_3)=k(v)$. Let $k_1$ be a perfect field containing $k(v)$ and
let $k$ be an algebraic closure of $k_1$. 
Define $X_1=k_1\otimes_{k(v)}\mathcal{X}_v(\ell)$
where $\mathcal{X}_v(\ell)$ is as in $(\ref{eq:Xv})$.
In particular, $X=X_3(\ell)=k\otimes_{k_1}X_1$.

Note that there exists a finite Galois extension $k_1'$ of $k_1$ such 
that $k_1'\subseteq k$ and such that the primitive central idempotents of $k[G]$ lie
in $k_1'[G]$. This can be seen as follows. 
By the Theorem on Lifting Idempotents (see \cite[Thm. (6.7) and Prop. (56.7)]{CRII}),
each primitive central idempotent $e$ of $k[G]$ can be lifted
to a primitive central idempotent $\hat{e}$ of  $W(k)[G]$ when $W(k)$ is
the ring of infinite Witt vectors over $k$. If $F(k)$ is the fraction field
of $W(k)$ and $\overline{F(k)}$ is an algebraic closure of $F(k)$, then we can
use the formula for the primitive central idempotents of $\overline{F(k)}[G]$
(see \cite[Prop. (9.21)]{CRII}) to see that $\hat{e}$ has coefficients in a 
cyclotomic extension of $\mathbb{Q}_3$. This implies that $\hat{e}$ has coefficients in
the intersection of the maximal cyclotomic extension of $\mathbb{Q}_3$ in $\overline{F(k)}$ and
$W(k)$. Therefore, $\hat{e}$ has coefficients in $\mathbb{Z}_3[\hat{\xi}]$ for some
root of unity $\hat{\xi}$ whose order is relatively prime to $3$. But this means that
there exists a root $\xi$ of unity in $k$ whose order is relatively prime to $3$
such that $e$ lies in $k_1(\xi)[G]$. Since $k_1(\xi)$ is finite Galois over $k_1$, we can
take $k_1'=k_1(\xi)$. 

Let now $k_2$ be a finite field extension of $k_1'$
such that $k_2\subseteq k$ and such that all the indecomposable
$k[G]$-modules occurring in the decomposition of $\HH^0(X,\Omega_X)$
are realizable over $k_2$. Letting $X_2=k_2\otimes_{k_1}X_1$, we obtain from
Propositions \ref{prop:fulldifferent1} - \ref{prop:fullequal2} that the 
$k_2[G]$-module 
$\HH^0(X_2,\Omega_{X_2})$ is a direct sum over blocks $B_2$ of 
$k_2[G]$ of modules of the form $P_{B_2} \oplus U_{B_2}$ 
in which $P_{B_2}$ is a projective $B_2$-module and $U_{B_2}$ is either the zero 
module or a single indecomposable non-projective $B_2$-module.  
Moreover, one can determine $P_{B_2}$ and $U_{B_2}$
from the ramification data associated to the cover $X\to X/G$.
We have
$$k_2\otimes_{k_1}\HH^0(X_1,\Omega_{X_1})\cong \HH^0(X_2,\Omega_{X_2})$$
as $k_2[G]$-modules, and
$$\HH^0(X_2,\Omega_{X_2})\cong \HH^0(X_1,\Omega_{X_1})^{[k_2:k_1]}$$
as $k_1[G]$-modules. Therefore,
it follows from the Krull-Schmidt-Azumaya theorem that the decomposition of 
$\HH^0(X_1,\Omega_{X_1})$ into indecomposable $k_1[G]$-modules is uniquely 
determined by the decomposition of $\HH^0(X_2,\Omega_{X_2})$
into indecomposable $k_2[G]$-modules.

Consider next a block $B_1$ of $k_1[G]$ corresponding to a primitive central
idempotent $\epsilon_1$. Then $\epsilon_1$ is a sum of primitive central 
idempotents in $k_2[G]$ 
$$\epsilon_1=\epsilon_{2,1}+\cdots + \epsilon_{2,l}$$
corresponding to blocks
$B_{2,1},\ldots,B_{2,l}$ of $k_2[G]$. Moreover, we have seen above that
$\epsilon_{2,1}, \ldots, \epsilon_{2,l}$ lie in $k_1'[G]$ where $k_1'$
is a finite Galois extension of $k_1$. In particular, this means that 
$\mathrm{Gal}(k_1'/k_1)$ acts transitively on 
$\{\epsilon_{2,1}, \ldots, \epsilon_{2,l}\}$.
Since every element in $\mathrm{Gal}(k_1'/k_1)$ can be extended to
an automorphism in $\mathrm{Aut}(k_2/k_1)$, this means in
particular that $\mathrm{Aut}(k_2/k_1)$ acts transitively on 
$\{\epsilon_{2,1}, \ldots, \epsilon_{2,l}\}$.

Suppose the $B_1$-module
$\epsilon_1 \,\HH^0(X_1,\Omega_{X_1})$
is a direct sum of a projective $B_1$-module together with a direct sum of non-zero
indecomposable $B_1$-modules $U_{B_1,1},\ldots,U_{B_1,t}$. We need to show that
$t\le 1$. Suppose $t>1$. For all $1\le j\le t$, we have
$$k_2\otimes_{k_1}U_{B_1,j}=\bigoplus_{i=1}^l\epsilon_{2,i}\left(k_2\otimes_{k_1}U_{B_1,j}\right).$$
Since this $k_2[G]$-module is non-zero and since $\mathrm{Aut}(k_2/k_1)$ acts 
transitively on $\{\epsilon_{2,1}, \ldots, \epsilon_{2,l}\}$, it follows that
the $k_2[G]$-module $\epsilon_{2,i}\left(k_2\otimes_{k_1}U_{B_1,j}\right)$ is a 
non-zero $B_{2,i}$-module for all $1\le i\le l$.
Since we have already seen above that $\epsilon_{2,i}\,\HH^0(X_2,\Omega_{X_2})$
is a direct sum of a projective $B_{2,i}$-module with at most one other non-projective indecomposable
$B_{2,i}$-module, it follows that $t\le 1$.
Note moreover, that the restriction of each projective indecomposable $B_{2,i}$-module 
to a $k_1[G]$-module is a projective $B_1$-module. 
In other words, the $k_1[G]$-module 
$\HH^0(X_1,\Omega_{X_1})$ is a direct sum over blocks $B_1$ of 
$k_1[G]$ of modules of the form $P_{B_1} \oplus U_{B_1}$ 
in which $P_{B_1}$ is a projective $B_1$-module and $U_{B_1}$ is either the zero module or a single indecomposable 
non-projective $B_1$-module.  Moreover, $P_{B_1}$ and $U_{B_1}$
are determined by the decomposition of
$$k_2\otimes_{k_1}\,\epsilon_1\,\HH^0(X_1,\Omega_{X_1})=
\bigoplus_{i=1}^l\epsilon_{2,i}\,\HH^0(X_2,\Omega_{X_2})$$
and we know from our discussion above that for all $1\le i\le l$,
$$\epsilon_{2,i}\,\HH^0(X_2,\Omega_{X_2}) = P_{B_{2,i}}\oplus U_{B_{2,i}}.$$
It follows that one can determine $P_{B_1}$ and $U_{B_1}$
from the modules $P_{B_{2,i}}$ and $U_{B_{2,i}}$ for $1\le i\le l$. Therefore, 
one can determine $P_{B_1}$ and $U_{B_1}$
from the ramification data associated to the cover $X\to X/G$.
This completes the proof of Theorem \ref{thm:modularresult}.

\subsection{Proof of Theorems $\ref{thm:firstmodtheorem}$ and $\ref{thm:secondmodtheorem}$ when $p=3$}
\label{ss:finalproof}

Fix a place $v$ of $F$ over 3, and define $M_{\mathcal{O}_{F,v}}$ to  be the $\mathcal{O}_{F,v}[G]$-module
$$M_{\mathcal{O}_{F,v}}=
\mathcal{O}_{F,v} \otimes_A \HH^0(\mathcal{X}(\ell),\Omega_{\mathcal{X}(\ell)})$$
which is flat over $\mathcal{O}_{F,v}$. 
Note that the residue fields $k(v)=A/\mathcal{P}_v$ and $\mathcal{O}_{F,v}/\mathfrak{m}_{F,v}$ coincide.
Define
$$X_v=\mathcal{X}_v(\ell) = k(v) \otimes_A \mathcal{X}(\ell).$$
Then $M_{\mathcal{O}_{F,v}}$ is a lift of the $k(v)[G]$-module
$\HH^0(X_v,\Omega_{X_v})$ over $\mathcal{O}_{F,v}$. 
As in (\ref{eq:reductionmodular}), let $X=X_3(\ell)$ be the reduction of 
$\mathcal{X}(\ell)$ modulo $3$ over $k=\overline{k(v)} = \overline{\mathbb{F}}_p$.
In other words, $X=k\otimes_{k(v)}X_v$ and $\HH^0(X,\Omega_X)=
k\otimes_{k_v}\HH^0(X_v,\Omega_{X_v})$ as $k[G]$-modules. 
Since $\HH^0(X,\Omega_X)=\{0\}$ for $\ell <7$, we
can assume that $\ell\ge 7$.

To prove Theorem \ref{thm:firstmodtheorem} when $p=3$, we follow the same argumentation as in the case
when $p>3$, where we use Propositions \ref{prop:fulldifferent1} - \ref{prop:fullequal2} 
and part (ii) of Theorem \ref{thm:modularresult} instead of
Lemma \ref{lem:tamemodular}. In particular, we obtain that $M_{\mathcal{O}_{F,v}}$ 
is a direct sum over blocks $B$ of $\mathcal{O}_{F,v}[G]$ of modules of the form $P_B \oplus U_B$ 
in which $P_B$ is projective and $U_B$ is either the zero module or a single indecomposable 
non-projective $B$-module.  Define $M_B = P_B\oplus U_B$.

To prove Theorem \ref{thm:secondmodtheorem} when $p=3$, we
assume now that $F$ contains a root of unity of order equal to the prime to $3$ part of 
the order of $G$.  Let $\mathfrak{a}$ be the maximal ideal over $3$ in $A$ associated to $v$, so that
$\mathfrak{a}$ corresponds to the maximal ideal $\mathfrak{m}_{F,v}$ of $\mathcal{O}_{F,v}$.
Since for different blocks $B$ and $B'$ of $\mathcal{O}_{F,v}[G]$, there are no non-trivial congruences 
modulo $\mathfrak{m}_{F,v}$ between $M_B$ and $M_{B'}$ and since for a fixed block $B$ of 
$\mathcal{O}_{F,v}[G]$, there are no non-trivial congruences modulo 
$\mathfrak{m}_{F,v}$ between $P_B$ and $U_B$, we prove Theorem \ref{thm:secondmodtheorem} when 
$p=3$ by following the same argumentation as in the case when $p>3$.
\hspace*{\fill}$\Box$

\section{Appendix:  Isotypic Hecke stable decompositions of the space of weight two cusp forms.}
\label{s:Hecke}

In this appendix we assume only that $N \ge 3$ is an integer and that $F$ is a number field.  
Following Shimura's notation in \cite[Chap. 3]{ShimuraBook}, 
we let $\Gamma=\mathrm{SL}(2,\mathbb{Z})$, and we denote the principal 
congruence subgroup of $\Gamma$ by $\Gamma_N$ (rather than $\Gamma(N)$, as in the introduction).
We let $\mathcal{S}(F)$ be the space of all weight two cusp forms for $\Gamma_N$
that have $q$-expansion coefficients in $F$ at all cusps, in the sense of \cite[\S1.6]{Katz1973}.  By  \cite[\S6.1-6.2]{ShimuraBook}, together with flat base change, it follows that $\mathcal{S}(F)$ coincides with the space of all weight two cusp forms for $\Gamma_N$ whose Fourier expansions with respect to $e^{2\pi iz/N}$ have coefficients in $F$.

The group $\overline{\Gamma}=\mathrm{SL}(2,\mathbb{Z}/N) = 
\Gamma/\Gamma_N$ then acts $F$-linearly on $\mathcal{S}(F)$. This action factors through an $F$-linear action by $G=\mathrm{PSL}(2,\mathbb{Z}/N)=\Gamma/\langle \Gamma_N, \pm \,\mathrm{I}\,\rangle$, where $\mathrm{I}$ denotes the $2\times 2$ identity matrix.
In this appendix, we follow the convention of Shimura in \cite{ShimuraBook} by
letting $\overline{\Gamma}$ act on $\mathcal{S}(F)$ on the right. As noted in the introduction, 
right actions of groups can be converted into left actions by 
letting the left action of a group element coincide with the right action of its inverse.

Let $\mathbb{T}$ denote the ring of Hecke operators of index prime to $N$ 
(see (\ref{eq:goodiso}) below for the precise definition). 
As in the introduction, but using right actions, we call a $\mathbb{T}$-stable decomposition into $F$-subspaces
$$\mathcal{S}(F)=E_1\oplus E_2$$
$G$-isotypic if there are two orthogonal central idempotents $e_1, e_2$ of $F[G]$ such that 
$1 = e_1 + e_2$ in $F[G]$ and $E_i = \mathcal{S}(F) e_i$ for $i = 1,2$. 
The goal of this section is to prove the following result.

\begin{proposition} 
\label{prop:Heckeresult} 
Suppose $e_1, e_2$ are orthogonal central idempotents of $F[G]$ such that
$1 = e_1 + e_2$ and each $e_i$ is fixed by the conjugation action of $\mathrm{PGL}(2,\mathbb{Z}/N)$
on $G$.  Then setting $E_i = \mathcal{S}(F) e_i$ for $i = 1,2$ gives a $G$-isotypic $\mathbb{T}$-stable decomposition of
$\mathcal{S}(F)$.
\end{proposition}

We discuss in Remark \ref{rem:largerHecke} the problem of constructing such decompositions for larger rings of Hecke operators.

To define $\mathbb{T}$, we follow Shimura \cite[\S 3.3]{ShimuraBook} and first define
\begin{eqnarray*}
\Delta_N &=& \left\{\alpha\in\mathrm{Mat}(2,\mathbb{Z})\;;\;\mathrm{det}(\alpha)>0\mbox{ and }\mathrm{gcd}(\mathrm{det}(\alpha),N)=1\right\},\\
\Delta'_N &=& \left\{\alpha\in\Delta_N\;;\;\alpha \equiv \left(\begin{array}{cc}1&0\\0&x\end{array}\right)\;\mbox{mod $N$ for some $x\in(\mathbb{Z}/N)^*$}\right\}.
\end{eqnarray*}
In Shimura's notation, we let $R(\Gamma,\Delta_N)$ (resp. $R(\Gamma_N,\Delta_N')$)  be the ring that is generated as a free $\mathbb{Z}$-module
by the double cosets 
$$\Gamma \alpha \Gamma \quad\mbox{for $\alpha\in\Delta_N$}\qquad\mbox{(resp. }\Gamma_N \alpha  \Gamma_N\quad\mbox{for $\alpha\in\Delta'_N$).}$$
We refer the reader to \cite[\S3.1]{ShimuraBook} for the definition of the (commutative) ring multiplication in $R(\Gamma,\Delta_N)$ (resp. $R(\Gamma_N,\Delta_N')$); we will not need this in what follows. By \cite[Prop. 3.31]{ShimuraBook}, the correspondence 
$$\Gamma_N \alpha \Gamma_N \mapsto \Gamma \alpha \Gamma$$
for $\alpha\in\Delta'_N$, defines an isomorphism between $R(\Gamma_N,\Delta_N')$ and $R(\Gamma,\Delta_N)$.

For each positive integer $n$ with $\mathrm{gcd}(n,N)=1$, we define
$\rho'_N(n)$ to be a set of representatives $\alpha\in\Delta'_N$ of all distinct double cosets in $\Gamma_N\backslash\Delta'_N/\Gamma_N$ such that $\mathrm{det}(\alpha)=n$. We define
\begin{equation}
\label{eq:Tn'}
T'(n)=\sum_{\alpha\in\rho'_N(n)} \,\Gamma_N \alpha \Gamma_N.
\end{equation}
By \cite[Thm. 3.34]{ShimuraBook}, 
\begin{equation}
\label{eq:goodiso}
\mathbb{T}= R(\Gamma_N,\Delta_N')\otimes_{\mathbb{Z}}\mathbb{Q}
\end{equation}
is the $\mathbb{Q}$-algebra generated by all $T'(n)$ when $n$ ranges over all positive integers with $\mathrm{gcd}(n,N)=1$.
A right action of $R(\Gamma_N,\Delta'_N)$, and hence of $\mathbb{T}$, on $f \in \mathcal{S}(F)$ is defined in the following
way.  For $\alpha \in \Delta_N'$, write
$$\Gamma_N \alpha \Gamma_N = \bigcup_i \Gamma_N \alpha_i$$
as a finite disjoint union of right cosets.  Define
$$f\big|\,\Gamma_N \alpha \Gamma_N = \sum_i f|\alpha_i$$
where for a matrix $\gamma=\left(\begin{array}{cc}a&b\\c&d\end{array}\right)\in\mathrm{GL}(2,\mathbb{Q})$ and $z$ in the complex upper half plane $\mathfrak{H}$ we let 
\begin{equation}
\label{eq:action}
(f|\gamma )(z) = \mathrm{det}(\gamma)\,(cz+d)^{-2} \,f\left(\frac{az+b}{cz+d}\right).
\end{equation}
In particular, for all $r\in\mathbb{Q}$, we have 
\begin{equation}
\label{eq:scalar}
(f|\,r \,\mathrm{I}\, )(z)=r^2\,(r^{-2})\,f(z) = f(z).
\end{equation}
Note that, for $\alpha \in \Delta_N'$, the right action on $\mathcal{S}(F)$ by the double coset $\Gamma_N\alpha\Gamma_N$ defines an $F$-linear transformation on $\mathcal{S}(F)$, which we denote by
$\left[\Gamma_N\alpha\Gamma_N\right]$.
By \cite[Thm. 3.41]{ShimuraBook}, the $F$-linear transformations 
$\left[\Gamma_N\alpha\Gamma_N\right]$ on $\mathcal{S}(F)$, with $\alpha\in\Delta'_N$, 
are mutually commutative, and normal with respect to the Petersson inner product on $\mathcal{S}(F)$. In particular, there exists an $F$-basis of $\mathcal{S}(F)$ consisting of common eigenfunctions of the linear transformations $\left[\Gamma_N\alpha\Gamma_N\right]$ for all $\alpha\in\Delta'_N$.

A well-defined right action by $\overline{\Gamma}=\mathrm{SL}(2,\mathbb{Z}/N)=\Gamma/\Gamma_N$ on $\mathcal{S}(F)$ is defined by 
\begin{equation}
\label{eq:urp}
f \star \overline{\gamma} = f|\gamma
\end{equation}
if $\gamma \in \Gamma$ has image $\overline{\gamma} \in \overline{\Gamma}$. 
Since $G=\mathrm{PSL}(2,\mathbb{Z}/N)=\Gamma/\langle \Gamma_N,\pm \,\mathrm{I}\,\rangle$, it follows by (\ref{eq:scalar}) that this right action factors through a well-defined right action by $G=\mathrm{PSL}(2,\mathbb{Z}/N)$ on $\mathcal{S}(F)$, which is defined by 
\begin{equation}
\label{eq:urp1}
f \star \overline{\overline{\gamma}} = f|\gamma
\end{equation}
if $\gamma \in \Gamma$ has image $\overline{\overline{\gamma}} \in \mathrm{PSL}(2,\mathbb{Z}/N)$. 
These right actions can be made into left actions in the usual way via
$$\overline{\gamma} \star f = f \star (\overline{\gamma})^{-1}\qquad
\mbox{(resp. $\overline{\overline{\gamma}} \star f = f \star (\overline{\overline{\gamma}})^{-1}$)}.$$

We can combine the actions by $R(\Gamma_N,\Delta'_N)$, $\mathbb{T}$ and $\overline{\Gamma}$ using the larger Hecke ring $R=R(\Gamma_N,\Delta)$, where $$\Delta=\{\alpha\in\mathrm{Mat}(2,\mathbb{Z})\;;\;\mathrm{det}(\alpha)>0\}.$$ 
In other words, $R$ is the ring that is generated as a free $\mathbb{Z}$-module by the double cosets
$$\Gamma_N \alpha \Gamma_N\quad\mbox{for $\alpha\in\Delta$}.$$  
As before, we refer the reader to \cite[\S3.1]{ShimuraBook} for the definition of the (commutative) ring multiplication in $R=R(\Gamma_N,\Delta)$.
We have a natural injection of $\mathbb{Q}$-algebras
\begin{equation}
\label{eq:inject}
\mathbb{T} = R(\Gamma_N,\Delta_N')\otimes_{\mathbb{Z}}\mathbb{Q}\; \hookrightarrow \;
R\otimes_{\mathbb{Z}}\mathbb{Q}.
\end{equation}
Define left and right actions of $\overline{\Gamma}=\mathrm{SL}(2,\mathbb{Z}/N)$ on $R$ as follows. If $\overline{\gamma}$ is the image of $\gamma \in \Gamma$ and $\alpha\in \Delta$, then
\begin{equation}
\label{eq:actions}
\Gamma_N \alpha \Gamma_N \cdot \overline{ \gamma} = \Gamma_N (\alpha \gamma )\Gamma_N \quad \mathrm{and}\quad
\overline{\gamma} \cdot \Gamma_N \alpha \Gamma_N  = \Gamma_N( \gamma \alpha )\Gamma_N.
\end{equation}
We extend these actions by linearity to define left and right actions of $\mathbb{Z}[\overline{\Gamma}]$ on $R$ and of $\mathbb{Q}[\overline{\Gamma}]$ on $R\otimes_{\mathbb{Z}}\mathbb{Q}$. 
We have natural right actions of $R\otimes_{\mathbb{Z}}\mathbb{Q}$ and of $\mathbb{Q}[\overline{\Gamma}]$ on $\mathcal{S}(F)$
via (\ref{eq:action}) and (\ref{eq:urp}).   Moreover, the right action of $\mathbb{Q}[\overline{\Gamma}]$ factors through a well-defined right action of $\mathbb{Q}[G]$ on $\mathcal{S}(F)$ via (\ref{eq:urp1}).

Since for any element $\gamma\in\Gamma$, the $\mathrm{PGL}(2,\mathbb{Z}/N)$ conjugates of the image $\overline{\overline{\gamma}}$ in $G$ are the images of the  $\mathrm{GL}(2,\mathbb{Z}/N)$ conjugates of the image $\overline{\gamma}$  in $\overline{\Gamma}$ and because of (\ref{eq:inject}), the following result implies Proposition \ref{prop:Heckeresult}.

\begin{lemma} 
\label{lem:reduce}
For each double coset $\Gamma_N \alpha\Gamma_N$
with $\alpha \in \Delta_N'$ and each $\gamma\in\Gamma$ with image $\overline{\gamma} \in \overline{\Gamma}$ the following is true.  Let $s$ be the element of 
$\mathbb{Z}[\overline{\Gamma}]\subset\mathbb{Q}[\overline{\Gamma}]$ that is the sum of the $\mathrm{GL}(2,\mathbb{Z}/N)$ conjugates of $\overline{\gamma}$.  Then in $R\otimes_{\mathbb{Z}}\mathbb{Q}$ one has
\begin{equation}
\label{eq:identity}
(\Gamma_N \alpha\Gamma_N) \cdot s = s \cdot (\Gamma_N\alpha \Gamma_N)
\end{equation}
where the products on the left and right sides of $(\ref{eq:identity})$ denote the right and left actions of $\mathbb{Q}[\overline{\Gamma}]$ on $R\otimes_{\mathbb{Z}}\mathbb{Q}$, respectively.
\end{lemma}

\begin{proof} 
Let $C$ be the conjugacy class of $\overline{\gamma}$ in $\mathrm{GL}(2,\mathbb{Z}/N)$, say
$$C=\{\overline{\beta}_i\,\overline{\gamma}\,\overline{\beta}_i^{-1}\}_{i=1}^{n_\gamma}$$
for appropriate $\overline{\beta}_i\in \mathrm{GL}(2,\mathbb{Z}/N)$. For $1\le i\le n_\gamma$, let  $\beta_i\in \Delta_N$ be a preimage of $\overline{\beta}_i$. 
Since each $\alpha\in\Delta'_N$ lies in $\Delta_N$, it defines an element $\overline{\alpha}$ of $\mathrm{GL}(2,\mathbb{Z}/N)$. Thus we obtain
$$C=\{(\overline{\alpha}\overline{\beta}_i)\,\overline{\gamma}\,(\overline{\alpha}\overline{\beta}_i)^{-1}\}_{i=1}^{n_\gamma}$$
 for all $\alpha\in\Delta'_N$. This implies that for all $\alpha\in\Delta'_N$ and for $s=\sum_{c\in C}c\;$ we have
\begin{eqnarray*}
(\Gamma_N\alpha\Gamma_N)\cdot s&=& \sum_{i=1}^{n_\gamma} \Gamma_N(\alpha\beta_i\gamma\beta_i^{-1})\Gamma_N\\
&=&\sum_{i=1}^{n_\gamma} \Gamma_N\left((\alpha\beta_i)\gamma(\alpha\beta_i)^{-1}\right)\alpha\Gamma_N\\
&=&s\cdot (\Gamma_N\alpha\Gamma_N).
\end{eqnarray*}
\end{proof}

\begin{remark}
\label{rem:largerHecke}
We now discuss an issue that arises if  we replace $R(\Gamma_N,\Delta'_N)$ by the bigger Hecke algebra $R(\Gamma_N,\Delta')$ when
$$\Delta' = \left\{\alpha\in\Delta\;;\;\alpha \equiv \left(\begin{array}{cc}1&0\\0&x\end{array}\right)\;\mbox{mod $N$ for some $x\in(\mathbb{Z}/N)$}\right\}.$$
For each integer $n\ge 1$, we define
$\rho'(n)$ to be a set of representatives $\alpha\in\Delta'$ of all distinct double cosets in $\Gamma_N\backslash\Delta'/\Gamma_N$ such that $\mathrm{det}(\alpha)=n$. We define
\begin{equation}
\label{eq:Tnnew}
T'(n)=\sum_{\alpha\in\rho'(n)} \,\Gamma_N\,\alpha\,\Gamma_N.
\end{equation}
Note that for integers $n\ge 1$ with $\mathrm{gcd}(n,N)=1$, the definition of $T'(n)$ in (\ref{eq:Tnnew}) coincides with the definition of $T'(n)$ in (\ref{eq:Tn'}).
By \cite[Thm. 3.34]{ShimuraBook}, $R(\Gamma_N,\Delta')\otimes_{\mathbb{Z}}\mathbb{Q}$ is generated by $T'(n)$ when $n$ ranges over all positive integers. 
We can then define the bigger Hecke algebra $\mathbb{T}'$ to be the $\mathbb{Q}$-algebra generated by all $T'(n)$ when $n$ ranges over all positive integers. We again obtain an injection of $\mathbb{Q}$-algebras
$$\mathbb{T}' = R(\Gamma_N,\Delta')\otimes_{\mathbb{Z}}\mathbb{Q}\; \hookrightarrow \;
R\otimes_{\mathbb{Z}}\mathbb{Q}.$$
However, for $\alpha \in\Delta'$ for which $\mathrm{det}(\alpha)$ is not relatively prime to $N$, we do not obtain the identity (\ref{eq:identity}) in general. 
To be concrete, let $\alpha_N =\left(\begin{array}{cc}1&0\\0&N\end{array}\right)$ and let 
$\gamma=\left(\begin{array}{cc}1&0\\1&1\end{array}\right)\in \Gamma$. Then all elements in $\Gamma_N(\gamma\alpha_N)\Gamma_N$ are congruent to $\left(\begin{array}{cc}1&0\\1&0\end{array}\right)$ mod $N$. 
On the other hand, for any element $\overline{\beta}\in\mathrm{GL}(2,\mathbb{Z}/N)$ with 
preimage $\beta\in\Delta_N$, we have that all elements in $\Gamma_N(\alpha_N(\beta\gamma\beta^{-1}))\Gamma_N$ are congruent modulo $N$ to a matrix of the form
$\left(\begin{array}{cc}a_1&a_2\\0&0\end{array}\right)$ for certain elements $a_1,a_2\in\mathbb{Z}/N$. In other words, there are elements $\gamma\in\Gamma$ for which the identity (\ref{eq:identity}) is not valid when $\alpha=\alpha_N$.
Since we have $T'(N)=\Gamma_N \alpha_N\Gamma_N$ by \cite[Prop. 3.33]{ShimuraBook}, it follows that the right and left actions of $s$ on $T'(N)$ do also not coincide for the above $\gamma$, when $s$ is as in Lemma \ref{lem:reduce}.
\end{remark}

\bibliographystyle{plain}
\bibliography{Holobib}

\def\cprime{$'$} \def\cprime{$'$}
\begin{thebibliography}{10}

\bibitem{Alperin}
J.~L. Alperin.
\newblock {\em Local representation theory}, volume~11 of {\em Cambridge
  Studies in Advanced Mathematics}.
\newblock Cambridge University Press, Cambridge, 1986.
\newblock Modular representations as an introduction to the local
  representation theory of finite groups.

\bibitem{ARS}
M.~Auslander, I.~Reiten, and S.~O. Smal{\o}.
\newblock {\em Representation theory of {A}rtin algebras}, volume~36 of {\em
  Cambridge Studies in Advanced Mathematics}.
\newblock Cambridge University Press, Cambridge, 1997.
\newblock Corrected reprint of the 1995 original.

\bibitem{BeCaGu}
P.~Bending, A.~Camina, and R.~Guralnick.
\newblock Automorphisms of the modular curve.
\newblock In {\em Progress in Galois theory}, volume~12 of {\em Dev. Math.},
  pages 25--37. Springer, New York, 2005.

\bibitem{benson}
D.~J. Benson.
\newblock {\em Representations and cohomology. {I}}, volume~30 of {\em
  Cambridge Studies in Advanced Mathematics}.
\newblock Cambridge University Press, Cambridge, second edition, 1998.
\newblock Basic representation theory of finite groups and associative
  algebras.

\bibitem{BleChi2000}
F.~M. Bleher and T.~Chinburg.
\newblock Universal deformation rings and cyclic blocks.
\newblock {\em Math. Ann.}, 318(4):805--836, 2000.

\bibitem{Borne:06}
N.~Borne.
\newblock Cohomology of {$G$}-sheaves in positive characteristic.
\newblock {\em Adv. Math.}, 201(2):454--515, 2006.

\bibitem{Boseck}
H.~Boseck.
\newblock Zur {T}heorie der {W}eierstrasspunkte.
\newblock {\em Math. Nachr.}, 19:29--63, 1958.

\bibitem{Burkhardt}
R.~Burkhardt.
\newblock Die {Z}erlegungsmatrizen der {G}ruppen {${\rm PSL}(2,p^{f})$}.
\newblock {\em J. Algebra}, 40(1):75--96, 1976.

\bibitem{Chevalley1934-eb}
C.~Chevalley, A.~Weil, and E.~Hecke.
\newblock \"{U}ber das {V}erhalten der {I}ntegrale 1. {G}attung bei
  {A}utomorphismen des {F}unktionenk\"orpers.
\newblock {\em Abh. Math. Sem. Univ. Hamburg}, 10(1):358--361, 1934.

\bibitem{CEPT1997}
T.~Chinburg, B.~Erez, G.~Pappas, and M.~J. Taylor.
\newblock {$\epsilon$}-constants and the {G}alois structure of de {R}ham
  cohomology.
\newblock {\em Ann. of Math. (2)}, 146(2):411--473, 1997.

\bibitem{CRI}
C.~W. Curtis and I.~Reiner.
\newblock {\em Methods of representation theory. {V}ol. {I}}.
\newblock Pure and Applied Mathematics (New York). John Wiley \& Sons, Inc.,
  New York, 1981.
\newblock With applications to finite groups and orders, A Wiley-Interscience
  Publication.

\bibitem{CRII}
C.~W. Curtis and I.~Reiner.
\newblock {\em Methods of representation theory. {V}ol. {II}}.
\newblock Pure and Applied Mathematics (New York). John Wiley \& Sons, Inc.,
  New York, 1987.
\newblock With applications to finite groups and orders, A Wiley-Interscience
  Publication.

\bibitem{dade}
E.~C. Dade.
\newblock Blocks with cyclic defect groups.
\newblock {\em Ann. of Math. (2)}, 84:20--48, 1966.

\bibitem{DeligneRapoport1973}
P.~Deligne and M.~Rapoport.
\newblock Les sch\'{e}mas de modules de courbes elliptiques.
\newblock In {\em Modular functions of one variable, {II} ({P}roc. {I}nternat.
  {S}ummer {S}chool, {U}niv. {A}ntwerp, {A}ntwerp, 1972)}, pages 143--316.
  Lecture Notes in Math., Vol. 349. Springer, Berlin, 1973.

\bibitem{Diamond1997}
F.~Diamond.
\newblock Congruences between modular forms: raising the level and dropping
  {E}uler factors.
\newblock {\em Proc. Nat. Acad. Sci. U.S.A.}, 94(21):11143--11146, 1997.
\newblock Elliptic curves and modular forms (Washington, DC, 1996).

\bibitem{Ribet2015}
L.~V. Dieulefait, J.~Jim\'enez~Urroz, and K.~A. Ribet.
\newblock Modular forms with large coefficient fields via congruences.
\newblock {\em Res. Number Theory}, 1:Art. 2, 14, 2015.

\bibitem{GroTo}
A.~Grothendieck.
\newblock Sur quelques points d'alg\`ebre homologique.
\newblock {\em T\^ohoku Math. J. (2)}, 9:119--221, 1957.

\bibitem{SGA:1}
A.~Grothendieck and M.~Raynaud.
\newblock {\em Rev\^{e}tements \'{e}tales et groupe fondamental $({S}{G}{A}$
  $1)$}, volume 1960/61 of {\em S\'{e}minaire de G\'{e}om\'{e}trie
  Alg\'{e}brique}.
\newblock Institut des Hautes \'{E}tudes Scientifiques, Paris, 1963.

\bibitem{Hartshorne:77}
R.~Hartshorne.
\newblock {\em Algebraic geometry}.
\newblock Springer-Verlag, New York, 1977.
\newblock Graduate Texts in Mathematics, No. 52.

\bibitem{Hecke:1928}
E.~Hecke.
\newblock \"{U}ber ein {F}undamentalproblem aus der {T}heorie der elliptischen
  {M}odulfunktionen.
\newblock {\em Abh. Math. Sem. Univ. Hamburg}, 6(1):235--257, 1928.

\bibitem{Hiss}
G.~Hiss.
\newblock A converse to the {F}ong-{S}wan-{I}saacs theorem.
\newblock {\em J. Algebra}, 111(1):279--290, 1987.

\bibitem{Hu:67}
B.~Huppert.
\newblock {\em Endliche {G}ruppen. {I}}.
\newblock Springer-Verlag, Berlin, 1967.
\newblock Die Grundlehren der Mathematischen Wissenschaften, Band 134.

\bibitem{Hurwitz1892}
A.~Hurwitz.
\newblock {\"U}ber algebraische {G}ebilde mit eindeutigen {T}ransformationen in
  sich.
\newblock {\em Math. Ann.}, 41(3):403--442, 1892.

\bibitem{Igusa59}
J.~Igusa.
\newblock Kroneckerian model of fields of elliptic modular functions.
\newblock {\em Amer. J. Math.}, 81:561--577, 1959.

\bibitem{Kani:86}
E.~Kani.
\newblock The {G}alois-module structure of the space of holomorphic
  differentials of a curve.
\newblock {\em J. Reine Angew. Math.}, 367:187--206, 1986.

\bibitem{SK}
S.~Karanikolopoulos and A.~Kontogeorgis.
\newblock Representation of cyclic groups in positive characteristic and
  {W}eierstrass semigroups.
\newblock {\em J. Number Theory}, 133(1):158--175, 2013.

\bibitem{Katz1973}
N.~M. Katz.
\newblock {$p$}-adic properties of modular schemes and modular forms.
\newblock In {\em Modular functions of one variable, {III} ({P}roc. {I}nternat.
  {S}ummer {S}chool, {U}niv. {A}ntwerp, {A}ntwerp, 1972)}, pages 69--190.
  Lecture Notes in Mathematics, Vol. 350. Springer, Berlin, 1973.

\bibitem{KaMa}
N.~M. Katz and B.~Mazur.
\newblock {\em Arithmetic moduli of elliptic curves}.
\newblock Princeton University Press, Princeton, NJ, 1985.

\bibitem{marquesward}
S.~Marques and K.~Ward.
\newblock Holomorphic differentials of certain solvable covers of the
  projective line over a perfect field.
\newblock {\em Math. Nachr.}, 291(13):2057--2083, 2018.

\bibitem{moreno1993algebraic}
C.~Moreno.
\newblock {\em {Algebraic Curves Over Finite Fields}}.
\newblock Cambridge Tracts in Mathematics. Cambridge University Press, 1993.

\bibitem{MumfordInvariantTheory}
D.~Mumford, J.~Fogarty, and F.~Kirwan.
\newblock {\em Geometric invariant theory}, volume~34 of {\em Ergebnisse der
  Mathematik und ihrer Grenzgebiete (2) [Results in Mathematics and Related
  Areas (2)]}.
\newblock Springer-Verlag, Berlin, third edition, 1994.

\bibitem{Nakajima:1986}
S.~Nakajima.
\newblock Galois module structure of cohomology groups for tamely ramified
  coverings of algebraic varieties.
\newblock {\em J. Number Theory}, 22(1):115--123, 1986.

\bibitem{Ribet}
K.~A. Ribet.
\newblock Congruence relations between modular forms.
\newblock In {\em Proceedings of the {I}nternational {C}ongress of
  {M}athematicians, {V}ol.\ 1, 2 ({W}arsaw, 1983)}, pages 503--514. PWN,
  Warsaw, 1984.

\bibitem{csm}
M.~Rzedowski-Calder{\'o}n, G.~Villa-Salvador, and M.~L. Madan.
\newblock Galois module structure of holomorphic differentials in
  characteristic {$p$}.
\newblock {\em Arch. Math. (Basel)}, 66(2):150--156, 1996.

\bibitem{ShimuraBook}
G.~Shimura.
\newblock {\em Introduction to the arithmetic theory of automorphic functions},
  volume~11 of {\em Publications of the Mathematical Society of Japan}.
\newblock Princeton University Press, Princeton, NJ, 1994.
\newblock Reprint of the 1971 original, Kano Memorial Lectures, 1.

\bibitem{Thomason}
R.~W. Thomason.
\newblock Algebraic {$K$}-theory of group scheme actions.
\newblock In {\em Algebraic topology and algebraic {$K$}-theory ({P}rinceton,
  {N}.{J}., 1983)}, volume 113 of {\em Ann. of Math. Stud.}, pages 539--563.
  Princeton Univ. Press, Princeton, NJ, 1987.

\bibitem{vm}
R.~C. Valentini and M.~L. Madan.
\newblock Automorphisms and holomorphic differentials in characteristic {$p$}.
\newblock {\em J. Number Theory}, 13(1):106--115, 1981.

\bibitem{washington}
L.~C. Washington.
\newblock {\em Introduction to cyclotomic fields}, volume~83 of {\em Graduate
  Texts in Mathematics}.
\newblock Springer-Verlag, New York, 1982.

\end{thebibliography}

\end{document}